\newtheorem*{thm*}{Theorem}
\newtheorem{thm}{Theorem}[section]
\newtheorem{theorem}[thm]{Theorem}
\newtheorem{corollary}[thm]{Corollary}
\newtheorem{prop}[thm]{Proposition}
\newtheorem{lemma}[thm]{Lemma}
\theoremstyle{remark}
\newtheorem{remark}[thm]{Remark}
\theoremstyle{definition}
\newtheorem{example}[thm]{Example}
\newtheorem{defn}[thm]{Definition}
\newtheorem{notation}[thm]{Notation}
\newtheorem{conjecture}[thm]{Conjecture}
\newcommand{\ba}{\mathbf{a}}
\newcommand{\bb}{\mathbf{b}}
\newcommand{\bd}{\mathbf{d}}
\newcommand{\be}{\mathbf{e}}
\newcommand{\bk}{\mathbf{k}}
\newcommand{\bn}{\mathbf{n}}
\newcommand{\bzero}{\mathbf{0}}
\newcommand{\bdelta}{\bm{\delta}}
\renewcommand{\O}{\mathcal{O}}
\newcommand{\PP}{\mathbb{P}}
\newcommand{\cA}{\mathcal{A}}
\newcommand{\cB}{\mathcal{B}}
\newcommand{\cC}{\mathcal{C}}
\newcommand{\cE}{\mathcal{E}}
\newcommand{\cL}{\mathcal{L}}
\newcommand{\cO}{\mathcal{O}}
\renewcommand{\r}{r}  
\renewcommand{\d}{\partial}
\DeclareMathOperator{\Sing}{Sing}
\DeclareMathOperator{\Span}{span}
\DeclareMathOperator{\Hom}{Hom}
\DeclareMathOperator{\cHom}{\mathcal{H}\mathit{om}}
\DeclareMathOperator{\rank}{rank}
\DeclareMathOperator{\codim}{codim}
\DeclareMathOperator{\img}{img}
\DeclareMathOperator{\per}{per}
\DeclareMathOperator{\pr}{pr} 
\DeclareMathOperator{\dett}{det}
\DeclareMathOperator{\Seg}{Seg}
\DeclareMathOperator{\Sub}{Sub}
\DeclareMathOperator{\Spec}{Spec}
\DeclareMathOperator{\Tr}{Tr}
\DeclareMathOperator{\mult}{mult}
\newcommand{\defining}[1]{\textbf{#1}}
\newcommand{\rad}[1]{\operatorname{supp}(#1)}
\title{Geometric lower bounds for generalized ranks}
\author{Zach Teitler}
\email{zteitler@boisestate.edu}
\address{Department of Mathematics \\
1910 University Drive \\
Boise State University \\
Boise, ID 83725-1555 \\
USA}
\date{\today}
\subjclass[2010]{15A21, 15A69, 14N15}
\keywords{Waring rank, secant varieties}
\begin{document}

\bibliographystyle{amsalpha}       

\begin{abstract}
We revisit a geometric lower bound for Waring rank of polynomials
(symmetric rank of symmetric tensors)
of \cite{Landsberg:2009yq}
and generalize it to a lower bound for rank with respect to arbitrary varieties,
improving the bound given by the ``non-Abelian'' catalecticants recently introduced
by Landsberg and Ottaviani.
This is applied to give lower bounds for ranks of multihomogeneous polynomials
(partially symmetric tensors);
a special case is the simultaneous Waring decomposition problem
for a linear system of polynomials.
We generalize the classical Apolarity Lemma to multihomogeneous polynomials
and give some more general statements.
Finally we revisit the lower bound of \cite{MR2842085},
and again generalize it to multihomogeneous polynomials and some more general settings.
\end{abstract}

\maketitle

\section{Introduction}\label{section: introduction}

Let $F$ be a homogeneous polynomial of degree $d$ in several variables.
A \defining{power sum decomposition} of $F$ is
an expression $F = c_1 \ell_1^d + \dotsb + c_r \ell_r^d$ in which
the $\ell_i$ are linear forms and the $c_i$ are scalars.
The \defining{length} of a power sum decomposition is the number $r$ of terms.
The \defining{Waring rank} of $F$, denoted $r(F)$,
is the least length $r$ of a power sum decomposition of $F$.
A \defining{Waring decomposition} of $F$ is a power sum decomposition of minimal length.
For example,
\begin{gather}
  xy = \frac{1}{4} \Big( (x+y)^2 - (x-y)^2 \Big) , \\
  \label{eq: xyz}
  xyz = \frac{1}{24} \Big( (x+y+z)^3 - (x+y-z)^3 - (x-y+z)^3 + (x-y-z)^3 \Big),
\end{gather}
so $\r(xy) \leq 2$ and $\r(xyz) \leq 4$.

In fact, both of these inequalities are actually equalities.
Several lower bounds for Waring rank have been developed.
The earliest lower bound, and the basis for all the rest, involves \textit{catalecticants},
which were introduced by Sylvester in 1851.
We review catalecticants in Section~\ref{section: catalecticants}.
It turns out that the catalecticant lower bound gives $\r(xy) \geq 2$ and $\r(xyz) \geq 3$.
Clearly, an improvement is desirable.
One such improvement was given in \cite{Landsberg:2009yq},
which showed that the catalecticant bound for rank could be improved by
adding the dimension of a certain set of singularities.
We review this in Section~\ref{section: improved lower bounds}; it gives $\r(xyz) \geq 4$.
Another improvement given in \cite{MR2842085} also yields $\r(xyz) \geq 4$;
it is reviewed in Section~\ref{section: ranestad-schreyer bounds}.

Here we are interested in more general notions of rank
for which, as we will see, there are well-known (generalized) catalecticant lower bounds.
We develop improvements to these lower bounds
analogous to the improvements in \cite{Landsberg:2009yq}.
In some cases we are also able to develop improvements analogous to the one in \cite{MR2842085}.

Here is an example in which we are able to determine a generalized rank.
Fix $a, b > 0$ and consider $F = x_1 \dotsm x_a y_1 \dotsm y_b$, a homogeneous form of degree $a+b$ in $a+b$ variables,
and also a bihomogeneous form of bidegree $(a,b)$.
Generalizing Waring rank, we consider expressions of $F$ as a sum of terms of the form $\ell(x)^a m(y)^b$:
\[
  x_1 \dotsm x_a y_1 \dotsm y_b = \sum_{i=1}^r \ell_i(x_1,\dotsc,x_a)^a m_i(y_1,\dotsc,y_b)^b,
\]
each $\ell_i$ and $m_i$ a linear form.
We are able to show that the least number of terms in such an expression is $2^{a+b-2}$,
see Example~\ref{example: bihomogeneous product} and Example~\ref{example: bihomogeneous product ranestad schreyer}.

And here is an example which we are not able to resolve.
Fix $s, t > 0$ and $F = x_1 \dotsm x_{st}$.
Consider expressions of $F$ as a sum of $s$-th powers of homogeneous forms of degree $t$:
\[
  x_1 \dotsm x_{st} = \sum_{i=1}^r G_i(x_1,\dotsc,x_{st})^s,
\]
each $\deg G_i = t$.
It is easy to see that there is such an expression with $r = 2^{s-1}$,
but we are not able to determine whether or not this is the shortest possible expression.

As the reader will see, this is just one of many open questions.

\bigskip

In the remainder of this introduction we describe, in steps of increasing generality,
the notions of rank in which we are interested.
In Section~\ref{section: catalecticants} we review catalecticants for classical Waring rank (as above)
and in the generalized settings.
In Section~\ref{section: improved lower bounds} we introduce our improvements to the catalecticant lower bounds
generalizing \cite{Landsberg:2009yq}.
We review Apolarity Lemmas in Section~\ref{section: apolarity lemmas}.
Finally in Section~\ref{section: ranestad-schreyer bounds} we develop, at least for some cases,
improvements to the catalecticant lower bounds generalizing \cite{MR2842085}.


We work over an algebraically closed field $\Bbbk$ of characteristic $0$.

\subsection{Classical Waring rank}

Recall that a homogeneous polynomial of degree $d$ in $n$ variables is called an \defining{$n$-ary $d$-form}
or an \defining{$n$-ary $d$-ic}; thus, for example, a homogeneous polynomial of degree $5$ in $2$ variables
is a $2$-ary $5$-form, or binary quintic.

If $F$ is a quadratic form, the Waring rank of $F$ is equal to its rank as a quadratic form.
Also, ranks of binary forms are understood,
thanks to 19th century work by Sylvester
\cite{Sylvester:1851kx}, \cite{Sylvester:1851wd}, \cite{Sylvester:1886uq},
Gundelfinger \cite{Gundelfinger:1886fk}, and others, see \cite[Ch.~XI]{MR2850282};
for more recent treatments see for example \cite{MR859177}, \cite{MR2754189}, \cite{Reznick:2013uq}.
Ranks of ternary cubics are well known,
see for example \cite{comonmour96}, \cite[\textsection 8]{Landsberg:2009yq}.
Also the theorem of
Alexander and Hirschowitz \cite{MR1311347}, \cite{MR2387598}, \cite{MR1813598}, \cite{MR1943904}, \cite{MR2886162}
gives the ranks of \emph{general} forms, meaning those in a dense open subset of the space of forms.
Namely, a general $d$-form in $n$ variables has rank
\[
  \left \lceil \frac{1}{n} \binom{n+d-1}{n-1} \right \rceil,
\]
with a short list of exceptions: when $d=2$, the general rank is $n$ (instead of $(n+1)/2$);
when $(n,d) = (3,4), (4,4), (5,4), (5,3)$ the general rank is respectively $6, 10, 15, 8$ (instead of $5, 9, 14, 7$).

However for an arbitrary given form,
it is surprisingly nontrivial to determine $\r(F)$.
There is no known effective way to determine if a given $F$ is general,
so that it has the rank given by Alexander--Hirschowitz.
Nevertheless, ranks have been worked out in a number of cases.
Typically one can express $F$ as a sum of $d$th powers of linear forms,
giving an upper bound on $\r(F)$.
For example, the power sum decomposition
\[
  x_1 \dotsm x_n
    = \frac{1}{2^{n-1} n!}
      \sum_{(\epsilon_2,\dotsc,\epsilon_n) \in \{\pm1\}^{n-1}}
        \epsilon_2 \dotsm \epsilon_n (x_1 + \epsilon_2 x_2 + \dotsb + \epsilon_n x_n)^n
\]
shows $\r(x_1 \dotsm x_n) \leq 2^{n-1}$.
Computational methods to find power sum decompositions have been developed \cite{MR2736103}, \cite{Oeding2013}.
Finding good upper bounds is an interesting challenge,
see \cite{MR2383331}, \cite[\textsection 5]{Landsberg:2009yq}, \cite{Jelisiejew:2013fk},
\cite{Ballico:2013sf}, \cite{Blekherman:2014eq}.

The problem we are concerned with here, however, is to give a lower bound.
We will review some ideas for lower bounds in the following sections;
for now, I will simply list all the determinations of Waring ranks of which I am aware.

That $\r(x_1 \dotsm x_n) = 2^{n-1}$ was shown for $n=4$ in \cite{Landsberg:2009yq} (2010),
and for all $n$ in \cite{MR2842085} (2011),
which showed more generally that $\r((x_1 \dotsm x_n)^d) = (d+1)^{n-1}$.
The ranks of arbitrary monomials and sums of pairwise coprime monomials
were determined in \cite{Carlini20125} (2012).
Plane quartics have been studied \cite{Kleppe:1999fk}, \cite{Bernardi201134}, \cite{Paris:2013fk} in great detail.
In \cite{Teitler:2013uq}, Waring ranks are determined for the defining equations
of hyperplane arrangements which are mirror arrangements for complex reflection groups
satisfying a certain hypothesis.
A few isolated examples have been computed: $\r(x(y_1^2+\dotsb+y_n^2)) = \r(x(y_1^2+\dotsb+y_n^2+x^2)) = 2n$
and $\r(x_1 y_1 z_1 + \dotsb + x_n y_n z_n) = 4n$
\cite[\textsection7]{Landsberg:2009yq}, \cite{Ventura:2013bh};
$\r(x_0^2 y_0 - (x_0+x_1)^2 y_1 + x_1^2 y_2) = 9$ \cite{Buczynska:2013zh}.
See \cite{MR1096187} for the forms $(x_1^2+\dotsb+x_n^2)^{d/2}$.
Carlini, Catalisano, and Chiantini have shown very recently \cite{Carlini:2014rr}
that $\r(F(x_1,\dotsc,x_n) + y_1^d + \dotsb + y_s^d) = \r(F) + s$
and $\r(F(x_1,x_2) + G(y_1,y_2)) = \r(F) + \r(G)$
(a conjecture of Strassen asserts that this should hold for all $F$ and $G$ involving any number of variables).
See also \cite{Woo:2014ix}.
Together with the previously mentioned quadratic and binary forms, ternary cubics, and general forms,
this is, as far as I know, a complete list of all forms whose Waring ranks have been determined.

The recency of these results is somewhat surprising given the long history
and widespread interest in questions about Waring rank,
going back at least to Sylvester and 19th century investigations of apolarity and canonical forms.
A wide range of applications has emerged in other areas of mathematics, statistics, engineering, and sciences.
See for example \cite{comonmour96}, \cite[Chapter 4]{MR2723140}, \cite{MR2447451},
\cite{MR2736103}, \cite{MR2895192}, \cite{MR2865915}.
For comprehensive introductions to this subject and its history and applications
see \cite{MR1735271}, \cite{MR2865915}.
We digress to briefly describe some of the applications.
The rank of a polynomial may be considered a measure of its complexity, as in the field of
\emph{geometric complexity theory} \cite{Landsberg:2013ys}.
The linear functional on the space of $d$-forms corresponding to the inner product with $\ell^d = (a_1 x_1 + \dotsb + a_n x_n)^d$
is given by evaluation at the point $(a_1,\dotsc,a_n)$ (up to factorial factors),
so a power sum decomposition of a polynomial corresponds to a
\emph{decomposition of a linear functional into a combination of atomic measures}.
See \cite{MR1096187} for applications of this idea to number theory, functional analysis,
numerical analysis (quadrature problems), and spherical designs.
In statistics, a power sum decomposition of a polynomial corresponds to
\emph{mixture model of joint distributions of independent identically distributed random variables}.
As an extremely simple example of this, a random variable $X$ on a finite set with $P(X=i) = p_i$ for $1 \leq i \leq n$
may be described by the linear form $\ell_X = p_1 x_1 + \dotsb + p_n x_n$;
then the coefficients of $\ell_X^d$ give the joint distribution of $d$ independent identically distributed copies of $X$.
For a given random variable $Y$ encoded in the coefficients of a $d$-form $F$, a power sum decomposition of $F$
corresponds to an expression of $Y$ as a mixture model of joint distributions
of independent identically distributed random variables.
(In this context one considers power sum decompositions with the extra constraints that all coefficients must be
nonnegative real numbers summing to $1$.)
This is related to the \emph{PARAFAC/CANDECOMP} decomposition.
This basic idea plays a role in applications such as \emph{blind source separation} in signal processing,
where an observed signal must be decomposed into simple single sources.
Much more discussion and detail may be found in the references above.

Despite this long history and widespread interest we are still left with the simple question: given $F$, what is $\r(F)$?

\begin{example}\label{example: determinant}
Let $\det_n$ be the determinant of an $n \times n$ generic matrix,
\[
  \det\nolimits_n
    = \det
      \begin{pmatrix}
        x_{1,1} & \dots & x_{1,n} \\
         \vdots &       &  \vdots \\
        x_{n,1} & \dots & x_{n,n}
      \end{pmatrix} ,
\]
a polynomial of degree $n$.
Since $\det_n$ is a sum of $n!$ monomials each with rank $2^{n-1}$, $\r(\det_n) \leq 2^{n-1} n!$;
in particular $\r(\det_3) \leq 24$.

There is a remarkable improvement of this, recently discovered by Derksen \cite{Derksen:2013sf}:
\begin{equation}\label{eq: derksen formula}
\begin{split}
 \dett_3 = \frac{1}{2} \Big( & (x_{13}+x_{12})(x_{21}-x_{22})(x_{31}+x_{32}) \\
   & + (x_{11}+x_{12})(x_{22}-x_{23})(x_{32}+x_{33}) \\
   & + 2 x_{12}(x_{23}-x_{21})(x_{33}+x_{31}) \\
   & + (x_{13}-x_{12})(x_{22}+x_{21})(x_{32}-x_{31}) \\
   & + (x_{11}-x_{12})(x_{23}+x_{22})(x_{33}-x_{32})
   \Big) .
\end{split}
\end{equation}
Each of the $5$ terms is a product of $3$ linear forms, $\ell_1 \ell_2 \ell_3$.
This has rank $4$ by substitution in \eqref{eq: xyz}.
Thus $\r(\det_3) \leq 20$.
As noted by Derksen, an improved upper bound for larger determinants
follows by Laplace expansion by complementary minors in the first $3$ rows.
We will see in Example~\ref{example: derksen} that
\begin{equation}\label{eq: derksen bound}
  \r(\dett_n) \leq \left(\frac{5}{6}\right)^{\lfloor n/3 \rfloor} 2^{n-1} n! .
\end{equation}

What about lower bounds for $\r(\det_n)$?
We will see in Example~\ref{example: determinant catalecticant}
that Sylvester's catalecticant lower bound gives
$\r(\det_n) \geq \binom{n}{\lfloor n/2 \rfloor}^2$
(so $\r(\det_3) \geq 9$).
A different lower bound introduced by Ranestad and Schreyer \cite{MR2842085},
together with a result of Masoumeh Sepideh~Shafiei \cite{Shafiei:ud},
gives $\r(\det_n) \geq \frac{1}{2} \binom{2n}{n}$ (so $\r(\det_3) \geq 10$),
see Example~\ref{example: ranestad-schreyer-shafiei determinant bound}.
The lower bound of \cite{Landsberg:2009yq} gives
$\r(\det_n) \geq \binom{n}{\lfloor n/2 \rfloor}^2 + n^2 - (\lfloor n/2 \rfloor + 1)^2$
(so $\r(\det_3) \geq 14$), see Example~\ref{example: determinant singularity improvement}.
Among these lower bounds for $\r(\det_n)$, the Ranestad--Schreyer--Shafiei lower bound grows most quickly,
giving the best result for $n \geq 5$.
However all three of these lower bounds grow exponentially in $n$, much more slowly than the factorial upper bound.

It would be quite interesting to determine $\r(\det_n)$, or even to give better bounds.
See \cite{Shafiei:ud}, \cite{Shafiei:2013fk} for further discussion of determinants along with
permanents (see Example~\ref{example: matrix linear series}), Pfaffians, symmetric determinants, etc.

We will revisit the generic determinant
in Examples~\ref{example: determinant bihomogeneous rank},
\ref{example: determinant catalecticant},
\ref{example: determinant bihomogeneous catalecticant},
\ref{example: determinant singularity improvement}.
\end{example}

\begin{remark}
The rank of a general form of degree $n$ in $n^2$ variables is $\lceil \frac{1}{n^2} \binom{n^2+n-1}{n} \rceil$.
This is greater than $2^{n-1} n!$ for $n \geq 4$, so $\det_n$ has less than the general rank for $n \geq 4$.
However the general rank of a cubic in $9$ variables is $19$ while $14 \leq \r(\det_3) \leq 20$.
Even to determine whether $\det_3$ has greater than, equal to, or less than the general rank is an interesting challenge.
\end{remark}

\begin{remark}
Waring decompositions are typically not unique.
Of course one may replace a term $c_i \ell_i^d$ with $(c_i/\lambda^d)(\lambda \ell_i)^d$, and one may reorder the terms.
Even ignoring these trivial changes---say, by considering the unordered set $\{[\ell_1],\dotsc,[\ell_r]\}$,
with each linear form considered just up to scalar multiple---uniqueness may still fail.
For example,
\[
  xy = \frac{1}{4}(x+y)^2 - \frac{1}{4}(x-y)^2 = \frac{1}{12}(x+3y)^2 - \frac{1}{12}(x-3y)^2 .
\]
Uniqueness and non-uniqueness have been studied, see for example
\cite{MR2238925}, \cite{MR2439429}, \cite{MR2945601}, \cite{MR3056286}.
\end{remark}

\begin{remark}
Let $F$ be a $d$-form.
We have just remarked that the set of linear forms $\{\ell_1,\dotsc,\ell_r\}$
appearing in a Waring decomposition of $F$ is typically not uniquely determined.
What if we fix the $\ell_i$ and ask for uniqueness of the scalar coefficients $c_i$?

Suppose $F$ is a $d$-form and some linear forms $\ell_1,\dotsc,\ell_r$ are fixed.
Even if there exist scalars $c_1,\dotsc,c_r$ such that $F = \sum c_i \ell_i^d$,
i.e., even if $F$ is in the linear span of the $\ell_i^d$,
it may happen that the scalars $c_i$ are not necessarily uniquely determined,
as the $\ell_i^d$ may be linearly dependent.

However in the case of a Waring decomposition, the scalars $c_i$ are uniquely determined,
once $F$ and the linear forms $\ell_i$ are chosen.
Indeed, if $\{\ell_1,\dotsc,\ell_r\}$ are the linear forms appearing in a Waring decomposition of $F$
then the $\ell_i^d$ must be linearly independent,
or else the number of terms could be reduced by replacing
one of the $\ell_i^d$ by a linear combination of the others.
Since the $\ell_i^d$ are linearly independent, the scalars $c_i = c_i(F,\{\ell_1,\dotsc,\ell_r\})$
appearing in the Waring decomposition $F = \sum c_i \ell_i^d$ are uniquely determined.
\end{remark}

\subsection{Simultaneous Waring rank}

As a first step toward full generality,
let $W$ be a linear series of homogeneous forms of degree $d$.
A \defining{simultaneous power sum decomposition of $W$} of length $r$ is a collection of linear forms
$\ell_1,\dotsc,\ell_r$ such that for every $F \in W$ there exist scalars $c_1,\dotsc,c_r$
yielding a power sum decomposition $F = c_1 \ell_1^d + \dotsb + c_r \ell_r^d$.
That is, $W \subseteq \Span\{\ell_1^d,\dotsc,\ell_r^d\}$.
See \cite{Bronowski:1933fk}, \cite{MR1929406}, \cite{MR1998391}, \cite{MR2996361}.
The \defining{simultaneous Waring rank} $r(W)$ is the least length of
a simultaneous power sum decomposition of $W$;
a \defining{simultaneous Waring decomposition} is a simultaneous power sum decomposition of minimal length.
Clearly, $r(W) \geq r(F)$ for all $F \in W$, and $r(W) \geq \dim W$.
Also clearly, $r(W) \leq \sum r(F_i)$ for a basis $F_1,\dotsc,F_n$ of $W$.
Typically it is a difficult problem to determine the maximum and minimum of $r(F)$ for $F \in W$,
the set of $F$ on which the maximum is attained, etc.

\begin{example}\label{example: matrix linear series}
Recall that the \defining{permanent} of a $k \times k$ matrix $A = (a_{i,j})$
is
\[
  \per A = \sum_{\pi \in S_k} \prod_{i=1}^k a_{i,\pi(i)} ,
\]
that is, the (un-signed) sum of products
with one entry from each row and column of $A$.

Let $X = (x_{i,j})$, $1 \leq i \leq m$, $1 \leq j \leq n$ be a generic $m \times n$ matrix.
Let $D_k$ be the linear series spanned by the $k$-minors of $X$,
let $P_k$ be spanned by the permanents of $k \times k$ submatrices of $X$,
and let $R_k$ be spanned by the degree $k$ \defining{rook-free} products in $X$,
that is, products of $k$ distinct entries of $X$ with no two in the same row or column
(so that chess rooks placed in those positions would be pairwise non-attacking).

Note that $D_k, P_k \subset R_k$, so $\r(D_k), \r(P_k) \leq \r(R_k)$.
Each degree $k$ product of linearly independent factors has rank $2^{k-1}$,
so
\begin{equation}\label{eq: upper bound rank rookfree linear series}
  \r(R_k) \leq 2^{k-1} \dim(R_k) = 2^{k-1} \binom{m}{k} \binom{n}{k} k! .
\end{equation}
And
\[
  \r(D_k) \leq \left( \frac{5}{6} \right)^{\lfloor k/3 \rfloor} 2^{k-1} k! \dim(D_k)
    = \left( \frac{5}{6} \right)^{\lfloor k/3 \rfloor} 2^{k-1} \binom{m}{k} \binom{n}{k} k!  .
\]
As far as I know these are the best known upper bounds for the ranks of $R_k$ and $D_k$,
including in the case $m=n=k$ (where $D_k$ reduces to a single, square determinant).
While \eqref{eq: upper bound rank rookfree linear series} is also an upper bound for $r(P_k)$, we can do better.
First, the Ryser identity \cite{MR0150048} (for a square matrix) is the following:
\[
  \per_k = \per (x_{i,j})_{1\leq i,j\leq k}
  = \sum_{S \subseteq \{1,\dotsc,k\}} (-1)^{k - |S|} \prod_{i=1}^k \sum_{j \in S} x_{i,j} .
\]
For example,
\[
\begin{split}
  \per_3 &= (x_{1,1} + x_{1,2} + x_{1,3})(x_{2,1} + x_{2,2} + x_{2,3})(x_{3,1} + x_{3,2} + x_{3,3}) \\
    & \quad - (x_{1,1} + x_{1,2})(x_{2,1} + x_{2,2})(x_{3,1} + x_{3,2}) \\
    & \quad - (x_{1,2} + x_{1,3})(x_{2,2} + x_{2,3})(x_{3,2} + x_{3,3}) \\
    & \quad - (x_{1,1} + x_{1,3})(x_{2,1} + x_{2,3})(x_{3,1} + x_{3,3}) \\
    & \quad + x_{1,1}x_{2,1}x_{3,1} + x_{1,2}x_{2,2}x_{3,2} + x_{1,3}x_{2,3}x_{3,3} .
\end{split}
\]
This expresses the $k \times k$ permanent as a sum of $2^{k}-1$ terms, each of rank $2^{k-1}$.
Applying this to each basis element gives $\r(\per_k) \leq 2^{2k-1}-2^{k-1}$
and $\r(P_k) \leq (2^{2k-1}-2^{k-1}) \binom{m}{k} \binom{n}{k}$.
But better, Glynn gives a similar identity \cite{MR2673027}:
\begin{equation}\label{eq: Glynn}
  \per (x_{i,j})_{1\leq i,j\leq k}
  = \sum_{\substack{\epsilon \in \{\pm1\}^k \\ \epsilon_1 = 1}} \prod_{i=1}^k \sum_{j=1}^k \epsilon_i \epsilon_j x_{i,j} .
\end{equation}
For example,
\[
\begin{split}
  \per_3 &= (x_{1,1} + x_{1,2} + x_{1,3})(x_{2,1} + x_{2,2} + x_{2,3})(x_{3,1} + x_{3,2} + x_{3,3}) \\
    & \quad - (x_{1,1} + x_{1,2} - x_{1,3})(x_{2,1} + x_{2,2} - x_{2,3})(x_{3,1} + x_{3,2} - x_{3,3}) \\
    & \quad - (x_{1,1} - x_{1,2} + x_{1,3})(x_{2,1} - x_{2,2} + x_{2,3})(x_{3,1} - x_{3,2} + x_{3,3}) \\
    & \quad + (x_{1,1} - x_{1,2} - x_{1,3})(x_{2,1} - x_{2,2} - x_{2,3})(x_{3,1} - x_{3,2} - x_{3,3}) .
\end{split}
\]
This expresses the $k \times k$ permanent as a sum of $2^{k-1}$ terms, each of rank $2^{k-1}$.
Therefore $\r(\per_k) \leq 2^{2k-2}$ and $\r(P_k) \leq 2^{2k-2} \binom{m}{k} \binom{n}{k}$.

It would be interesting to determine if these natural linear series, especially $D_k$,
admit any simultaneous Waring decomposition shorter than simply decomposing separately
each member of a basis for the linear series,
or at least if any cleverly chosen basis can do better than the ``obvious'' defining basis
consisting of minors for $D_k$, permanents for $P_k$, and products for $R_k$.

We will revisit these linear series in Examples~\ref{example: matrix linear series catalecticants},
\ref{example: matrix linear series singularity improvement},
\ref{example: rookfree linear series singularity improvement}.
\end{example}


\begin{remark}\label{remark: coefficient uniqueness in simultaneous decomposition}
Note that if $\{\ell_1,\dotsc,\ell_r\}$ is a simultaneous Waring decomposition of $W$,
then the $\ell_i^d$ must be linearly independent, or else they would not be a minimal spanning set.
Thus if $\{\ell_1,\dotsc,\ell_r\}$ is a simultaneous Waring decomposition of $W$
then for each $F \in W$ the scalar coefficients $c_i = c_i(F,\{\ell_1,\dotsc,\ell_r\})$
in the power sum decomposition $F = \sum c_i \ell_i^d$
are uniquely determined, even though it is not necessarily a Waring decomposition of each $F \in W$.
\end{remark}

\subsection{Multihomogeneous polynomials}\label{section: multihomogeneous polynomials}

More generally, we consider ranks of multihomogeneous polynomials.
Fix $s > 0$, positive integers $n_1,\dotsc,n_s$,
and $s$ sets of doubly-indexed variables $x_{i,j}$, $1 \leq i \leq s$, $1 \leq j \leq n_i$.
A polynomial $F$ in the $x_{i,j}$ is multihomogeneous of multidegree $(d_1,\dotsc,d_s)$
if for each $i$, each monomial appearing in $F$ has degree $d_i$
in the $i$th set of variables, that is, $x_{i,1}, \dotsc, x_{i,n_i}$.

A \defining{multihomogeneous power sum decomposition of $F$} of length $r$
is an expression $F = \sum_{k=1}^r c_k \ell_{1,k}^{d_1} \dotsm \ell_{s,k}^{d_s}$
where each $\ell_{i,k}$ is a linear form in the $i$th set of variables.
As before, the \defining{multihomogeneous Waring rank} of $F$
is the least number of terms in a multihomogeneous power sum decomposition of $F$
and a \defining{multihomogeneous Waring decomposition} of $F$
is a multihomogeneous power sum decomposition of $F$ of minimal length.
See \cite{MR2246903} (focusing on uniqueness of decompositions).

Ranks of multihomogeneous polynomials generalize several familiar notions.
Classical Waring rank is the case $s=1$.
The case when the multidegree is $(1,\dotsc,1)$ is the usual tensor rank.
Tensor rank is very well studied, with applications far too numerous to mention;
see \cite{MR2535056}, \cite{MR2865915}.
If $W$ is a linear series of degree $d$ forms,
then the simultaneous Waring rank of $W$
is the rank of a single bihomogeneous polynomial of bidegree $(1,d)$.
Namely, let $F_1(x_1,\dotsc,x_n), \dotsc, F_s(x_1,\dotsc,x_n)$ be a basis for $W$;
then $\r(W)$ is equal to the rank of the multihomogeneous polynomial $M = \sum t_i F_i$ with multidegree $(1,d)$.

We make this last observation explicit.
Let $W, F_i, M$ be as above.
For each $j = 1,\dotsc,s$, let $e_j = (0,\dotsc,1,\dotsc,0)$,
the $s$-tuple with $1$ in the $j$th position and all other entries zero.
First, if $M = M(t,x) = \sum_{i=1}^r \ell_i(t) m_i(x)^d$ then for each $j = 1,\dotsc,s$,
$F_j = M(e_j,x) = \sum_{i=1}^r \ell_i(e_j) m_i(x)^d$,
so the $m_i$ give a simultaneous power sum decomposition of $W$ of length $r$.
Conversely, if $m_1,\dotsc,m_r$ give a simultaneous power sum decomposition of $W$,
write $F_j = \sum_{i=1}^r c_{i,j} m_i^d$ for each $j$.
The $c_{i,j}$ are uniquely determined by Remark~\ref{remark: coefficient uniqueness in simultaneous decomposition}.
For each $1 \leq i \leq r$, let $\ell_i$ be the linear form with $\ell_i(e_j) = c_{i,j}$ for $1 \leq j \leq s$.
Then $M = \sum_{i=1}^r \ell_i m_i^d$, giving a multihomogeneous power sum decomposition of $M$ of length $r$.

In order to distinguish between the classical Waring rank of $F$ as a homogeneous polynomial
and the rank of $F$ as a multihomogeneous polynomial,
we reserve $\r(F)$ for the former
and write $\r_{MH}(F)$ for the latter,
or $\r_{MH(d_1,\dotsc,d_s)}(F)$ if we wish to specify how $F$ is considered to be multihomogeneous.

\begin{example}\label{example: bihomogeneous product}
$F = x_1 \dotsm x_a y_1 \dotsm y_b$ is homogeneous of degree $d=a+b$
and has Waring rank $2^{a+b-1}$,
but it is also bihomogeneous of bidegree $(a,b)$ in the $x$ and $y$ variables,
and the bihomogeneous rank of $F$ is at most $2^{a+b-2}$.
Indeed, a decomposition of this length is given by multiplying decompositions of the separate parts.
Let $x_1 \dotsm x_a = \sum_{i=1}^{2^{a-1}} \ell_i^a$
and $y_1 \dotsm y_b = \sum_{j=1}^{2^{b-1}} m_j^b$ be Waring decompositions.
Then $F = \sum_{i,j} \ell_i^a m_j^b$ is a decomposition of $F$ as a bihomogeneous form
using $2^{a+b-2}$ terms.

This is actually the rank when $a=1$ or $b=1$---if, say, $a=1$,
then setting $x_1=1$ in any decomposition yields a (classical) Waring decomposition of $y_1\dotsm y_b$,
which must involve at least $2^{b-1}$ terms, so as a bihomogeneous form
$\r_{MH(1,b)}(F) \geq 2^{b-1} = 2^{a+b-2}$.

We will revisit these bihomogeneous products of variables
in Example~\ref{example: product of variables bihomogeneous catalecticant},
in Example~\ref{example: product of variables bihomogeneous singularity improvement},
and in Example~\ref{example: bihomogeneous product ranestad schreyer}
where we show that in fact $\r_{MH}(x_1 \dotsm x_a y_1 \dotsm y_b) = 2^{a+b-2}$.
\end{example}

\begin{example}
More generally, $r_{MH}(f(X)g(Y)) \leq r(f)r(g)$,
with equality if $r(f)=1$ or $r(g)=1$.
\end{example}

\begin{example}\label{example: determinant bihomogeneous rank}
The generic determinant $\det_n$ (see Example~\ref{example: determinant}) is bihomogeneous of bidegree $(a,n-a)$
in the sets of variables appearing in the first $a$ rows or last $n-a$ rows of the matrix.
The rank $\r_{MH(a,n-a)}(\det_n)$ is less than or equal to the Waring rank $\r(\det_n)$.
Also $\r_{MH(a,n-a)}(\det_n) \leq 2^{n-2} n!$, since $\det_n$ is a sum of $n!$ monomials
each with (multihomogeneous) rank at most $2^{n-2}$.
Better, Derksen's formula \eqref{eq: derksen formula} is multihomogeneous in the rows of the matrix,
so $\r_{MH(a,n-a)}(\det_n) \leq (\frac{5}{6})^{\lfloor n/3 \rfloor} 2^{n-2} n!$,
since $\det_n$ is a sum of $(5/6)^{\lfloor n/3 \rfloor} n!$ products of linear forms each with
(multihomogeneous) rank $2^{n-2}$.
We can do at least as well by expanding $\det_n$ as an alternating sum of products of maximal minors of the first $a$ rows
of the matrix with their complementary minors from the last $n-a$ rows,
yielding
\[
\begin{split}
  \r_{MH(a,n-a)}(\det\nolimits_n)
    & \leq \binom{n}{a} \r(\det\nolimits_a) \r(\det\nolimits_{n-a}) \\
    & \leq \binom{n}{a} \left( \frac{5}{6} \right)^{\lfloor a/3 \rfloor + \lfloor (n-a)/3 \rfloor} 2^{n-2} a! (n-a)! \\
    & = \left(\frac{5}{6}\right)^{\lfloor a/3 \rfloor + \lfloor (n-a)/3 \rfloor} 2^{n-2} n! \\
    & \leq \left(\frac{5}{6}\right)^{\lfloor n/3 \rfloor} 2^{n-2} n! .
\end{split}
\]
See also Examples~\ref{example: determinant bihomogeneous catalecticant},
\ref{example: determinant bihomogeneous singularity improvement}.
\end{example}

\subsection{Generalized rank}\label{section: generalized rank}

Even more generally, one can define rank with respect to any projective variety.
Let the variety $X \subset \PP^n$ be nondegenerate, that is, not contained in any hyperplane.
For an affine point $q \neq 0$, the \defining{rank of $q$ with respect to $X$}, denoted $r_X(q)$,
is the least $r$ such that there exist some $r$ distinct, reduced affine points $x_1, \dotsc, x_r$
such that $[x_1], \dotsc, [x_r] \in X$
and their linear span contains $q$: that is, $q = c_1 x_1 + \dotsb + c_r x_r$ for some scalars $c_i$.
Then the classical Waring rank is rank with respect to a Veronese variety;
tensor rank is rank with respect to a Segre variety;
the rank of a multihomogeneous polynomial is rank with respect to a Segre-Veronese variety,
a product of projective spaces $\PP^{n_1-1} \times \dotsb \times \PP^{n_s-1}$
embedded by the line bundle
\[
  \cO_{\PP^{n_1-1} \times \dotsb \times \PP^{n_s-1}}(d_1,\dotsc,d_s)
    = \pr_1^* \cO_{\PP^{n_1-1}}(d_1) \otimes \dotsb \otimes \pr_s^* \cO_{\PP^{n_s-1}}(d_s)
\]
where $(d_1,\dotsc,d_s)$ is the multidegree of the polynomial
and $\pr_i : \PP^{n_1-1} \times \dotsb \times \PP^{n_s-1} \to \PP^{n_i-1}$ is the projection onto the $i$th factor.

So this notion includes all the previous ideas, and more.
For example, the rank of an alternating tensor --- the least length of an expression
as a sum of simple wedges --- is its rank with respect to a Grassmannian in its Pl\"ucker embedding.
Ranks with respect to an elliptic normal curve have been studied in \cite[Thm.~28]{Bernardi201134}.

\begin{example}\label{example: carlini codimension one decompositions}
Carlini considered ``codimension one decompositions'' in \cite{MR2202247}.
Given a $d$-form $F \in S = \Bbbk[x_1,\dotsc,x_n]$,
such a decomposition is an expression $F = G_1 + \dotsc + G_r$,
where each $G_i$ is a $d$-form in a subring generated by $n-1$ linear forms:
$G_i \in \Bbbk[\ell_1,\dotsc,\ell_{n-1}]$.
In \cite{MR2202247} Carlini determines the number of summands in a codimension one decomposition of a general form.
The least number of terms in such a decomposition is given by rank with respect to
the variety of forms that depend on (at most) $n-1$ variables, called a \defining{subspace variety}.
We will define this more precisely in terms of catalecticants, introduced in the next section.
See Definition~\ref{definition: subspace variety}.
\end{example}

\begin{example}\label{example: carlini binary decompositions}
Similarly, Carlini considered ``binary decompositions'' in \cite{MR2184818},
expressions for $F$ as a sum of binary forms, i.e., forms lying in a subring generated by two linear forms.
Again the least number of terms in such an expression is given by rank with respect
to a subspace variety, this time parametrizing forms that depend on at most two variables,
see Definition~\ref{definition: subspace variety}.
\end{example}

Note that classical Waring rank corresponds to decompositions into forms depending on one variable,
i.e., homogeneous polynomials in a single linear form.
In fact the subspace variety whose points are forms depending on one variable is just the Veronese variety.

\begin{example}\label{example: split rank}
It is interesting to write a form $F$ as a sum of products of linear forms.
For example, the determinant and permanent of an $n \times n$ matrix can be written as
sums of $n!$ products of linear forms.
Derksen's formula improves this, see Example~\ref{example: derksen} below.
The least number of terms in such an expression is rank with respect to the variety parametrizing forms
which completely factor as products of linear forms, called the \defining{split variety}
or the Chow variety of zero-cycles,
see for example \cite{MR2729216}, \cite{MR2997451}, \cite{Torrance:2013qf}, \cite{MR3166068}.

Let us consider lower bounds for this rank, which we will call \defining{split rank}
and denote $\r_{\text{split}}$.
Since $\r(x_1\dotsm x_d) = 2^{d-1}$, we have for any $d$-form $F$ the relations
\[
  \r_{\text{split}}(F) \leq \r(F) \leq 2^{d-1} \r_{\text{split}}(F) .
\]
Therefore
\[
  \r_{\text{split}}(F) \geq 2^{1-d} \r(F) ,
\]
and any lower bound for Waring rank leads to a lower bound for split rank.
In Example~\ref{example: determinant} we saw $\r(\det_n) \geq \frac{1}{2} \binom{2n}{n}$,
so $\r_{\text{split}}(\det_n) \geq 2^{-n} \binom{2n}{n}$.
From this we get $\r_{\text{split}}(\det_3) \geq 3$.
The lower bound of \cite{Landsberg:2009yq} gives $\r(\det_3) \geq 14$,
so in fact $\r_{\text{split}}(\det_3) \geq 4$.

Similarly, the generic $n \times n$ permanent $\per_n$ has Waring rank
$\frac{1}{2} \binom{2n}{n} \leq \r(\per_n) \leq 2^{2n-2}$ (see \cite{Shafiei:ud})
so $2^{-n} \binom{2n}{n} \leq \r_{\text{split}}(\per_n)$.
And the Glynn identity \eqref{eq: Glynn} gives $\r_{\text{split}}(\per_n) \leq 2^{n-1}$.
Thus for instance $3 \leq \r_{\text{split}}(\per_3) \leq 4$.
Nathan Ilten has shown that $\r_{\text{split}}(\per_3) = 4$ (private communication).
\end{example}

\begin{example}\label{example: kth powers of t-forms}
If $F$ is a $d$-form and $d = kt$ then $F$ may be written as a sum of $k$th powers of $t$-forms.
The least number of terms in such an expression is the rank with respect to the variety of $d$-forms
which are $k$th powers.
Classical Waring rank is the case $t=1$.
Sum-of-squares decompositions correspond to $k=2$
(although in that setting one is usually interested in working
over a real field and finding decompositions with nonnegative coefficients,
whereas here we work over a closed field).
See \cite{Carlini:2013vl}, \cite{MR2935563}, \cite{Reznick:2013yq}.
\end{example}

\begin{example}\label{example: derksen}
We consider again the remarkable formula in Example~\ref{example: determinant}
discovered by Derksen \cite{Derksen:2013sf}.
Consider the generic determinant as a multilinear function of the columns,
i.e., $\det_n \in (\Bbbk^n)^{\otimes n}$.
For this paragraph we denote tensor rank by $\r_{\otimes}$.
Naively we have $\r_{\otimes}(\det_n) \leq n!$; for example,
\[
\begin{split}
  \dett_3 &= e_1 \otimes e_2 \otimes e_3 + e_2 \otimes e_3 \otimes e_1 + e_3 \otimes e_1 \otimes e_2 \\
    & \quad - e_1 \otimes e_3 \otimes e_2 - e_2 \otimes e_1 \otimes e_3 - e_3 \otimes e_2 \otimes e_1 ,
\end{split}
\]
where $\{e_1,e_2,e_3\}$ is a basis for $\Bbbk^3$.
Derksen's formula improves this:
\begin{equation}\label{eq: derksen formula tensor}
\begin{split}
  \dett_3 = \frac{1}{2} \Big( & (e_1+e_2) \otimes (e_2-e_3) \otimes (e_2+e_3) \\
    & + (e_1+e_2) \otimes (e_2-e_3) \otimes (e_2+e_3) \\
    & + 2 e_2 \otimes (e_3-e_1) \otimes (e_3+e_1) \\
    & + (e_3-e_2) \otimes (e_2+e_1) \otimes (e_2-e_1) \\
    & + (e_1-e_2) \otimes (e_3+e_2) \otimes (e_3-e_2) \Big) .
\end{split}
\end{equation}
This shows $\r_{\otimes}(\det_3) \leq 5$.
(One can show $\r_{\otimes}(\det_3) \geq 4$, see below.)
As noted by Derksen, Laplace expansion by complementary minors in the first $3$ rows gives
an improved upper bound for larger determinants.
\[
  \r_{\otimes}(\dett_n) \leq \binom{n}{3} \r_{\otimes}(\dett_3) \r_{\otimes}(\dett_{n-3})
    = \frac{5 \cdot n!}{6 \cdot (n-3)!} \r_{\otimes}(\dett_{n-3}),
\]
so by induction
\[
  \r_{\otimes}(\dett_n) \leq \left(\frac{5}{6}\right)^{\lfloor n/3 \rfloor} n! .
\]

Let us return to considering the determinant as a function of the entries, rather than the columns.
We get \eqref{eq: derksen formula}, as in Example~\ref{example: split rank},
from \eqref{eq: derksen formula tensor} by the substitution
that replaces $e_i \otimes e_j \otimes e_k$ with $x_{1,i} x_{2,j} x_{3,k}$:
\[
\begin{split}
 \dett_3 = \frac{1}{2} \Big( & (x_{13}+x_{12})(x_{21}-x_{22})(x_{31}+x_{32}) \\
   & + (x_{11}+x_{12})(x_{22}-x_{23})(x_{32}+x_{33}) \\
   & + 2 x_{12}(x_{23}-x_{21})(x_{33}+x_{31}) \\
   & + (x_{13}-x_{12})(x_{22}+x_{21})(x_{32}-x_{31}) \\
   & + (x_{11}-x_{12})(x_{23}+x_{22})(x_{33}-x_{32})
   \Big).
\end{split}
\]
This shows that $\r_{\text{split}}(\det_3) \leq 5$.
As in Example~\ref{example: split rank}, Laplace expansion shows that
\[
  \r_{\text{split}}(\dett_n) \leq \left(\frac{5}{6}\right)^{\lfloor n/3 \rfloor} n! .
\]
It follows that Waring rank satisfies
\[
  \r(\dett_n) \leq 2^{n-1} \left(\frac{5}{6}\right)^{\lfloor n/3 \rfloor} n! ,
\]
for example $\r(\det_3) \leq 20$.

Note that these formulas are multihomogeneous: each term involves one factor from each row of the matrix.
So Derksen's formula yields similarly improved upper bounds for $\r_{MH}(\det_n)$.

Analogously to the substitution that takes
\eqref{eq: derksen formula tensor} to \eqref{eq: derksen formula},
a similar substitution takes any tensor decomposition of $\det_n$
to a similar expression as a sum of products of linear forms.
Thus
\[
  \r_{\otimes}(\dett_3) \geq \r_{\text{split}}(\dett_3) \geq \frac{1}{4} \r(\dett_3) \geq \frac{14}{4} > 3,
\]
which shows $\r_{\otimes}(\dett_3) \geq 4$.
\end{example}

The problem of determining rank with respect to an arbitrary variety is
at least as hard as the already difficult problems of Waring rank and tensor rank.
Nevertheless, a great deal of progress has been made in understanding related questions
involving secant varieties, including determining dimensions and equations of secant varieties.
Recently, Landsberg and Ottaviani \cite{LO-apolar-catalecticant}
introduced a method to generate new equations of secant varieties.
They define a notion of catalecticants with respect to a given variety,
generalizing Sylvester's catalecticants beyond the case of Veronese varieties
as well as the symmetric flattenings or generalized Hankel matrices used for studying
secant varieties of Segre varieties (i.e., tensor rank).
Their motivation was to find equations for secant varieties, but here we are interested in
the lower bounds for rank given by their generalized catalecticants.

\section{Catalecticants}\label{section: catalecticants}

For each of the generalizations of Waring rank defined above,
there is a reasonably well-known notion of a \emph{catalecticant}.
In each case this is a linear map whose rank is a lower bound for (generalized) Waring rank.

For a discussion of the name ``catalecticant'', see
\cite[pg.~49--50]{MR1096187}, \cite[Lecture~11]{Geramita}, \cite{Miller:2013lo}.

\subsection{Classical Waring rank}\label{section: catalecticants - classical Waring rank}

If $V$ is a finite dimensional vector space,
the $d$th symmetric power $S^d V$ is the space of symmetric tensors or equivalently
homogeneous polynomials of degree $d$ in $V$.
That is, if $V$ has basis $x_1,\dotsc,x_n$, $S^d V$ is the $d$th graded piece of the polynomial ring
$\Bbbk[V] = \Bbbk[x_1,\dotsc,x_n]$.
We regard $S^a V^*$ as the space of differential operators of order $a$ with constant coefficients,
equivalently homogeneous polynomials of degree $a$ in the partial differentiation operators
$\d_1 = \frac{\d}{\d x_1} , \dotsc, \d_n = \frac{\d}{\d x_n}$.
That is, $S^a V^*$ is the $a$th graded piece of the polynomial ring
$\Bbbk[V^*] = \Bbbk[\d_1,\dotsc,\d_n]$.

An element of $S^{d-a} V^*$ can be regarded both as a polynomial function on $V$
and as a differential operator on $\Bbbk[V]$.
Say $p \in S^{d-a} V^*$ is regarded as a polynomial with corresponding differential operator $D_p$.
For a linear form $\ell = c_1 x_1 + \dotsb + c_n x_n \in V$ we write $p(\ell) = p(c_1,\dotsc,c_n)$.
We will make frequent use, especially in Section~\ref{section: improved lower bounds},
of the following well-known fact:
If $p \in S^{d-a} V^*$ and $\ell \in V$ then $D_p(\ell^d) = \frac{d!}{a!} \ell^a p(\ell)$.
See for example \cite[(1.1.10)]{MR1735271}.
In particular $D_p(\ell^d) = 0$ if and only if $p(\ell) = 0$.

For $0 \leq a \leq d$ the natural map
$S^d V \otimes S^{d-a} V^* \to S^a V$
coincides (at least in characteristic zero) with the usual differentiation action,
$F \otimes D \mapsto DF$ for a polynomial $F$ of degree $d$ and differential operator $D$ of order $d-a$.

\begin{defn}
Let $F \in S^d V$ and $0 \leq a \leq d$.
The \defining{$a$th catalecticant of $F$}, denoted $C^a_F$,
is the linear map $C^a_F : S^a V^{*} \to S^{d-a} V$,
$D \mapsto DF$.
\end{defn}
This is also called a symmetric flattening or generalized Hankel matrix.
(If $F$ is a binary polynomial, i.e.\ $\dim V = 2$, of degree $d = 2a$, then
modulo some binomial factors
$C^a_F$ is a Hankel matrix
when written in the usual monomial basis for $S^a V$ and $S^a V^*$.)

The inequality
\begin{equation}\label{eq: catalecticant bound single homogeneous form}
  \r(F) \geq \rank C^a_F
\end{equation}
is well known.
See \cite{Sylvester:1851kx}, in which Sylvester introduced the catalecticant matrix in 1851.
For the reader who is new to this area, we mention a surprisingly quick proof
(at least, it seemed surprisingly quick to me when I learned it!):
If $F = c_1 \ell_1^d + \dotsb + c_r \ell_r^d$,
then the image of $C^a_F$ is spanned by the $DF$ for $D \in S^a V^*$,
and each $DF$ is contained in the span of $D \ell_1^d, \dotsc, D \ell_r^d$.
Each $D \ell_i^d = c \ell_i^{d-a}$ for some constant $c$.
So the image of $C^a_F$ is contained in the span of $\ell_1^{d-a}, \dotsc, \ell_r^{d-a}$.
Thus the rank of $C^a_F$ is at most $r = \r(F)$.

This simple dimension-counting idea carries through in the generalizations as well.

\begin{example}\label{example: product of variables catalecticant}
Each derivative of $x_1 \dotsm x_n$ is a product of a subset of $\{x_1,\dotsc,x_n\}$
(or a linear combination of such products).
For each $a$, the products of degree $d-a$ are linearly independent and there are $\binom{n}{a}$ of them.
So $\r(x_1 \dotsm x_n) \geq \rank(C^a_{x_1\dotsm x_n}) = \binom{n}{a}$.
This lower bound is maximized when $a = \lfloor n/2 \rfloor$.

For a completely explicit example, $\r(xyz) \geq \rank(C^2_{xyz}) = 3$,
since the image of $C^2_{xyz}$ is spanned by
$\{x = \partial_y \partial_z xyz, y = \partial_x \partial_z xyz, z = \partial_x \partial_y xyz\}$.
Compare this with the upper bound $\r(xyz) \leq 4$
obtained from the explicit power sum decomposition in the introduction.
The non-sharpness of the catalecticant bound in such an undemanding example
spurs us to look for improved lower bounds.

See Example~\ref{example: product of variables singularity improvement}.
\end{example}

\begin{example}\label{example: determinant catalecticant}
Consider the generic determinant $\det_n$ as in Example~\ref{example: determinant}.
By the Laplace expansion, $\partial_{i,j} \det_n$ is the complementary $(n-1)$-minor of the matrix.
By induction, the image of the $a$th catalecticant of $\det_n$ is spanned by the $(n-a)$-minors of the matrix.
There are $\binom{n}{a}^2$ of these and they are linearly independent
(in fact, no two $(n-a)$-minors have any monomials in common,
since every monomial appearing in a minor determines the
set of rows and columns of the minor).
Thus $\r(\det_n) \geq \rank(C^a_{\det_n}) = \binom{n}{a}^2$.
Again, this lower bound is maximized when $a = \lfloor n/2 \rfloor$.

See Example~\ref{example: determinant singularity improvement}.
\end{example}

\begin{remark}
Note that $C^a_F$ and $C^{d-a}_F$ are transposes of one another.
Certainly they map between the right spaces to be transposes of one another,
as $C^a_F : S^a V^* \to S^{d-a} V$ and $C^{d-a}_F : S^{d-a} V^* \to S^a V$.
It is easy to see that these maps are actually transposes.
Here is a completely elementary argument.
Let $A \in S^a V^*$ and $B \in S^{d-a} V^*$.
The pairing $\langle C^a_F A, B \rangle$ is given by applying the differentiation operator $B$
to the polynomial $C^a_F A = AF$, yielding $BAF$.
Similarly, the pairing $\langle A, C^{d-a}_F B \rangle$ is given by $ABF$.
These are equal, $BAF = ABF$, and thus $C^a_F$ and $C^{d-a}_F$ are transposes of one another.
That is, $C^a_F$ and $C^{d-a}_F$ are transpose because partial differentiation operators commute on polynomials.

Another way to say the same thing is
that for any bilinear functional $W_1 \times W_2 \to \Bbbk$,
the induced maps $W_1 \to W_2^*$ and $W_2 \to W_1^*$ are transposes.
The catalecticant maps $C^a_F$ and $C^{d-a}_F$
arise in this way from the bilinear map $S^a V^* \times S^{d-a} V^* \to S^d V^* \to \Bbbk$
sending $(A,B) \mapsto AB \mapsto ABF$.
\end{remark}

\begin{remark}
In Sylvester's \cite{Sylvester:1851kx} the term ``catalecticant'' refers to the determinant
of the middle (square) catalecticant $C^a_F$ when $F$ is a form of even degree $d=2a$---additionally,
Sylvester limits his discussion in that paper to binary forms.
This usage seems to have persisted for some time; see for example \cite[pg.~232]{MR2850282}, \cite{MR859177}.

I do not know at what point ``catalecticant'' came to refer to the linear maps as above,
but this has been common usage for quite some time.
For example Definition 9.19 of \cite{Geramita}
introduces the $(i,j)$-catalecticant matrix $\operatorname{Cat}_F(i;j:n)$ of $F$ for any $i+j=\deg(F)$;
Definitions 1.2 and 1.3 of \cite{MR1735271}
introduce the catalecticant homomorphism $C_f(u,v)$
and catalecticant matrix $\operatorname{Cat}_f(u,v;r)$ for any $u+v=\deg(f)$.

Our $C_F^a$ would be $\operatorname{Cat}_F(d-a;a:n)$ in the notation of \cite{Geramita},
or $C_F(d-a,a)$ in the notation of \cite{MR1735271}.
\end{remark}

\begin{remark}\label{remark: nonunimodality of catalecticant ranks}
If $n \leq 3$ then the sequence $\rank(C^a_F)$, $a = 0,1,\dotsc,d$, is unimodal,
so the maximum occurs for $a = \lfloor d/2 \rfloor$ \cite{MR0485835}.
For $n \geq 5$ the sequence is not necessarily unimodal \cite{MR1172667}, \cite{MR1227512}, \cite{MR1311776}
and it is not at all clear where the maximum occurs.
It is still an open question whether this sequence is unimodal when $n=4$,
see for example \cite{MR2968919}.
It is known to be unimodal when $n=5$ and $d \leq 15$ \cite{MR3003310}
but surprisingly (to me, at least) this appears to be still open when $n=4$.
(The example of Bernstein--Iarrobino in \cite{MR1172667} has $n=5$ and $d=16$.)
\end{remark}

\begin{example}\label{example: stanley nonunimodal}
We briefly present Stanley's nonunimodal example \cite[Example 4.3]{MR0485835},
a form of degree $4$ in $13$ variables, which we denote $x,y,z,t_1,\dotsc,t_{10}$.
In these variables let $F = x^3 t_1 + x^2 y t_2 + \dotsb + xyz t_{10}$, so that the coefficients of the $t_i$
are precisely the monomials of degree $3$ in $x,y,z$.
Then $\rank C^0_F = \rank C^4_F = 1$ and $\rank C^1_F = \rank C^3_F = 13$ while $\rank C^2_F = 12$.
Thus $\r(F) \geq 13$.
See Example~\ref{example: stanley nonunimodal improvement}.
\end{example}

\begin{example}\label{example: bernstein-iarrobino nonunimodal}
We briefly present the nonunimodal example of Bernstein--Iarrobino \cite{MR1172667},
a form of degree $16$ in $5$ variables, which we denote $x,y,z,s,t$.
Let $F = Gs + Ht$ where $G,H$ are general forms of degree $15$ in $x,y,z$.
The ranks of catalecticants of $F$ are as follows:
\[
\begin{array}{l rrrrrrrrr rrrrrrrr}
  \toprule
  a           & 0 & 1 &  2 &  3 &  4 &  5 &  6 &  7 &  8 &  9 & 10 & 11 & 12 & 13 & 14 & 15 & 16 \\
  \midrule
  \rank C^a_F & 1 & 5 & 12 & 22 & 35 & 51 & 70 & 91 & 90 & 91 & 70 & 51 & 35 & 22 & 12 &  5 &  1 \\
  \bottomrule
\end{array}
\]
Thus $\r(F) \geq 91$.
See Example~\ref{example: bernstein-iarrobino nonunimodal improvement}.
\end{example}

In fact, the examples of Stanley and Bernstein--Iarrobino
are merely the first two members of a much larger family.
See \cite{MR1227512}.

\begin{defn}\label{definition: concise}
A polynomial $F \in S^d V$ is called \defining{concise} (with respect to $V$)
if it satisfies the following equivalent conditions:
\begin{enumerate}
\item $F$ cannot be written as a polynomial in a smaller number of variables;
that is, if $F \in S^d V'$ for some $V' \subseteq V$, then $V' = V$.
\item The projective hypersurface $V(F)$ is not a cone.
\item $C^{d-1}_F$ is surjective, $C^1_F$ is injective.
\end{enumerate}
\end{defn}

It would be interesting to have a similar geometric characterization
of the condition that $C^{d-k}_F$ is surjective for $k \geq 2$.

Following \cite{MR2279854}, let the \defining{span} of $F$, denoted $\langle F \rangle \subseteq V$, be the image of $C^{d-1}_F$.
Then $F \in S^d \langle F \rangle$.
Elements of $\langle F \rangle$ are called \defining{essential variables} of $F$, and $F$ is said to
\defining{depend essentially} on $k$ variables if $k = \dim \langle F \rangle$.
\begin{defn}\label{definition: subspace variety}
For each $1 \leq k < n$, the \defining{subspace variety} $\Sub_k \subset \PP S^d V$ is the locus of forms depending
essentially on $k$ or fewer variables:
\[
  \Sub_k = \{ [f] \mid \dim \langle f \rangle \leq k \} = \{ [f] \mid \rank C^{d-1}_F \leq k \} .
\]
Note that, upon choosing bases for $S^{d-1} V^*$ and $S^1 V \cong V$,
$\Sub_k$ is the zero locus of the $(k+1)$-minors of the matrix $C^{d-1}_F$,
whose entries are polynomials in the coefficients of $F$.
So $\Sub_k$ is a projective variety.
(In fact the entries are just the coefficients, up to some factorial factors.)
\end{defn}
We have $\nu_d(\PP V) = \Sub_1 \subset \Sub_2 \subset \dotsb$.
Each $\Sub_k$ contains, but may be strictly larger than, the secant variety $\sigma_k(\nu_d(\PP V))$.
$\Sub_{n-1}$ is precisely the locus of non-concise forms.
See Examples~\ref{example: carlini codimension one decompositions}, \ref{example: carlini binary decompositions}.

\begin{remark}
If $F$ lies in the span of $\ell_1^d, \dotsc, \ell_r^d$
then the image of $C^a_F$ is contained in the span of $\ell_1^{d-a}, \dotsc, \ell_r^{d-a}$.
But the converse does not hold.
Fix $0 < a < d$, let $r = \dim S^{d-a} V = \binom{n-1+d-a}{d-a}$, and let each $\ell_i$ be a general linear form.
Then the $\ell_i^{d-a}$ span $S^{d-a} V$, but the $\ell_i^d$ do not span $S^d V$;
so any $F \in S_d$ outside of the span of the $\ell_i^d$ furnishes a counterexample.

Concretely: the image of $C^1_{xy}$ is spanned by $\{x,y\}$, but $xy$ does not lie in the span of $\{x^2,y^2\}$.
\end{remark}

\subsection{Simultaneous Waring rank}

Let $W \subseteq S^d V$ be a linear series of degree $d$ forms.
Just as the simultaneous Waring rank of $W$ is a special case of multihomogeneous Waring rank,
the catalecticants of $W$ are special cases of catalecticants of multihomogeneous forms,
which we discuss next.
Nevertheless we pause to examine this special case before we go on.

We define two apparently different types of catalecticants (but they will be unified below,
see Example~\ref{example: linear series catalecticant equals multihomogeneous catalecticant}).
The $(0,a)$th catalecticant $C^{(0,a)}_W$ is the linear map
$S^a V^* \to W^* \otimes S^{d-a} V = \Hom(W,S^{d-a} V)$
sending $D \mapsto (F \mapsto DF)$.
The $(1,a)$th catalecticant $C^{(1,a)}_W$ is the linear map
$W \otimes S^a V^* \to S^{d-a} V$
sending $F \otimes D \mapsto DF$.
Thus the image of $C^{(1,a)}_W$ is the subspace spanned by the images of the $C^a_F$ for $F \in W$.

We have $\r(W) \geq \rank(C^{(1,a)}_W)$, with essentially the same proof as for classical Waring rank:
If $W$ is contained in the span of $\ell_1^d,\dotsc,\ell_r^d$,
then every $a$th derivative of every element of $W$ is contained in the span
of $\ell_1^{d-a},\dotsc,\ell_r^{d-a}$.

Similarly $\r(W) \geq \rank(C^{(0,a)}_W)$:
If $W$ is contained in the span of $\ell_1^d, \dotsc, \ell_r^d$
then $C^{(0,a)}_W(D)$ is determined by $D \ell_1^d, \dotsc, D \ell_r^d$
which are in turn determined by $D \ell_1^a, \dotsc, D \ell_r^a \in \Bbbk$,
so the image of $C^{(0,a)}_W$ has dimension at most $r$.

As before, it is easy to see that $C^{(1,a)}_W$ and $C^{(0,d-a)}_W$ are transposes of each other.

\begin{remark}
For a deep investigation of the ranks of catalecticants of linear series,
see \cite{MR2292384}.
\end{remark}

\begin{example}\label{example: matrix linear series catalecticants}
If $D_k$ is the linear series spanned by $k$-minors of a generic $m \times n$ matrix,
as in Example~\ref{example: matrix linear series},
then the image of $C^{(1,a)}_{D_k}$ is spanned by $(k-a)$-minors.
That is, the image of $C^{(1,a)}_{D_k}$ is just $D_{k-a}$.
Similarly, when $P_k$ is the linear series spanned by permanents of $k \times k$ submatrices
and $R_k$ is the linear series spanned by degree $k$ rook-free products in a generic $m \times n$ matrix,
then the image of the $(1,a)$th catalecticant of $P_k$ is $P_{k-a}$
and the image of the $(1,a)$th catalecticant of $R_k$ is $R_{k-a}$.
Thus
\begin{gather*}
  \rank C^{(1,a)}_{D_k} = \rank C^{(1,a)}_{P_k} = \binom{m}{k-a}\binom{n}{k-a},
  \\
  \rank C^{(1,a)}_{R_k} = \binom{m}{k-a}\binom{n}{k-a} (k-a)!.
\end{gather*}
In particular
\[
  \r(D_k) \geq \max_{0 \leq a \leq k} \binom{m}{a}\binom{n}{a},
  \qquad
  \r(P_k) \geq \max_{0 \leq a \leq k} \binom{m}{a}\binom{n}{a},
  \qquad
  \r(R_k) \geq \max_{0 \leq a \leq k} \binom{m}{a}\binom{n}{a} a!.
\]
Note that $\binom{m}{a}\binom{n}{a}$ is maximized when $a = \left\lceil \frac{mn-1}{m+n+2} \right\rceil$.
When $m=n$ this is $\left\lceil \frac{n-1}{2} \right\rceil$.
Indeed,
\[
  \frac{ \binom{m}{a+1}\binom{n}{a+1} }{ \binom{m}{a}\binom{n}{a} } = \frac{(m-a)(n-a)}{(a+1)^2}
\]
and this $>1$ if and only if $mn-(m+n)a > 2a+1$, that is, $(m+n+2)a < mn-1$.
In particular the sequence $\{ \binom{m}{a}\binom{n}{a} \mid a = 0,\dotsc,\min(m,n) \}$ is unimodal.

Similarly, $\binom{m}{a}\binom{n}{a} a!$ is maximized when
\[
  a = \left\lceil \frac{m+n+1 - \sqrt{(m+n+1)^2 - 4(mn-1)}}{2} \right\rceil .
\]
When $m=n$ this is $\left\lceil \frac{2n+1-\sqrt{4n+2}}{2} \right\rceil$.

See Examples~\ref{example: matrix linear series singularity improvement},
\ref{example: rookfree linear series singularity improvement}.
\end{example}

\subsection{Multihomogeneous polynomials}

At this point it is convenient to introduce multinomial notation.

\begin{notation}
Fix $s \geq 1$ and vector spaces $V_1 \cong \Bbbk^{n_1}, \dotsc, V_s \cong \Bbbk^{n_s}$.
We denote $\bn = (n_1,\dotsc,n_s)$.
For $\bd = (d_1, \dotsc, d_s)$, where $0 \leq d_i$ for $1 \leq i \leq s$,
we define
\[
  S^{\bd} V = S^{d_1} V_1 \otimes \dotsb \otimes S^{d_s} V_s
\]
and the dual space $S^{\bd} V^*$ similarly.
Also, we let $\bzero = (0,\dotsc,0)$, with $s$ entries.

The sum and difference of $s$-tuples is elementwise.
We partially order $s$-tuples by elementwise comparison;
thus 
$\bzero \leq \ba \leq \bd$ means that for each $i$, $0 \leq a_i \leq d_i$.
\end{notation}

\begin{defn}
Fix $\bn$ and $\bd$ as above.
Let $M \in S^{\bd} V$ be a multihomogeneous polynomial of multidegree $\bd$.
%
For each $\bzero \leq \ba \leq \bd$ the $\ba$'th catalecticant of $M$ is the linear map
\[
  C^{\ba}_M : S^{\ba} V^*  \to  S^{\bd-\ba} V ,
\]
that is
\[
  C^{\ba}_M : \prod_{i=1}^s S^{a_i}(V_i^*) \to \prod_{i=1}^s S^{d_i - a_i}(V_i) ,
\]
given by the usual contraction or flattening.

In detail, and for the sake of concreteness,
suppose for each $i$ we fix a basis $\{x_{i,1}, \dotsc, x_{i,n_i}\}$ for $V_i$.
We denote the dual basis for $V_i^*$ by $\{\partial_{i,1}, \dotsc, \partial_{i,n_i}\}$.
A polynomial in the $\partial_{i,m}$ acts as a differential operator on $S^{\bd} V$
(where each $\partial_{i,m}$ acts as $\partial / \partial x_{i,m}$).
The catalecticant $C^{\ba}_M$ takes a differential operator $D$ to its evaluation $DM$ on $M$,
regarded as a multihomogeneous polynomial.
When $D$ is homogeneous of multidegree $\ba$, $DM$ has multidegree $\bd-\ba$.
\end{defn}

The bound
\begin{equation}\label{eq: multihomogeneous polynomial catalecticant bound}
  \r_{MH}(M) \geq \rank C^{\ba}_M
\end{equation}
is well-known.
The proof is essentially the same as for classical Waring rank and simultaneous Waring rank.
Suppose $M$ is written as a sum of $r$ terms each of the form $\ell_1^{d_1} \dotsm \ell_s^{d_s}$,
where each $\ell_i \in V_i$.
If $D$ is multihomogeneous of multidegree $\ba$ then
$D(\ell_1^{d_1} \dotsm \ell_s^{d_s}) = c \ell_1^{d_1-a_1} \dotsm \ell_s^{d_s-a_s}$
for some constant $c$.
So $DM$ is a linear combination of the $r$ terms of the form $\ell_1^{d_1-a_1} \dotsm \ell_s^{d_s-a_s}$,
which means the image of $C^{\ba}_M$ is contained in the linear span of these $r$ terms.
Thus $r = \r_{MH}(M) \geq \rank C^{\ba}_M$.

\begin{example}\label{example: linear series catalecticant equals multihomogeneous catalecticant}
Given a linear series $W \subseteq S^d V$,
we regard the associated
bihomogeneous form $M$, defined in \textsection\ref{section: multihomogeneous polynomials},
as an element of $S^1 W^* \otimes S^d V$.
Then the catalacticants of $W$ defined earlier agree with the catalecticants of $M$ defined here:
$C^{(0,a)}_W = C^{(0,a)}_M$ and $C^{(1,a)}_W = C^{(1,a)}_M$.
\end{example}

\begin{example}\label{example: product of variables bihomogeneous catalecticant}
Let $F = x_1 \dotsm x_a y_1 \dotsm y_b$, a bihomogeneous form of bidegree $\bd = (a,b)$ in the $x$ and $y$ variables,
as in Example~\ref{example: bihomogeneous product}.
Let $\ba = (p,q)$ with $0 \leq p \leq a$, $0 \leq q \leq b$.
Then the image of $C^{\ba}_F$ is spanned by monomials which are products of $a-p$ of the $x$ variables and $b-q$ of the $y$ variables.
So $r_{MH(a,b)}(F) \geq \rank C^{\ba}_F = \binom{a}{p} \binom{b}{q}$.

In particular, when $a=b=2$, $r_{MH(2,2)}(x_1 x_2 y_1 y_2) \geq \binom{2}{1}^2 = 4$.
We saw before $r_{MH(a,b)}(F) \leq 2^{a+b-2}$.
Thus $r_{MH(2,2)}(x_1 x_2 y_1 y_2) = 4$.

See Example~\ref{example: product of variables bihomogeneous singularity improvement}.
\end{example}

\begin{example}\label{example: determinant bihomogeneous catalecticant}
The generic determinant $\det_n$ is bihomogeneous of bidegree $\bd = (a,n-a)$
in the sets of variables in the first $a$ and last $n-a$ rows,
as in Example~\ref{example: determinant bihomogeneous rank}.
For $\ba = (p,q)$ with $0 \leq p \leq a$, $0 \leq q \leq n-a$,
the image of the catalecticant $C^{\ba}_{\det_n}$ is spanned by $(n-p-q)$-minors
with $a-p$ rows in the first $a$ rows of the matrix, $n-a-q$ rows in the last $n-a$ rows of the matrix,
and any $n-p-q$ columns.
Thus $r_{MH(a,n-a)}(\det_n) \geq \rank C^{\ba}_{\det_n} = \binom{a}{p} \binom{n-a}{q} \binom{n}{p+q}$.

See Example~\ref{example: determinant bihomogeneous singularity improvement}.
\end{example}

\subsection{Generalized catalecticants}

Recently Landsberg and Ottaviani \cite{LO-apolar-catalecticant} gave a generalization of Sylvester's catalecticants
to the setting of generalized rank.
We recall their definition here.

Let $X$ be a projective variety and $L$ a very ample line bundle on $X$.
Let $V = H^0(X,L)^*$, so that $L$ naturally embeds $X \subset \PP V$.
Fix $v \in V$, $v \neq 0$; we aim to determine, or bound, $r_X(v)$.
Let $E$ be a vector bundle on $X$ whose rank (fiber dimension) we denote by $\rank E$.
Landsberg and Ottaviani's generalized catalecticant of $v$ with respect to $E$ is a map
denoted $C^E_v$ (they use the letter $A$ but we prefer $C$ for \emph{catalecticant})
and defined as follows.
From the natural map $E \otimes E^* \to \cO_X$ we get $E \otimes (L \otimes E^*) \to L$
and then in turn a multiplication map on global sections, $H^0(E) \otimes H^0(L \otimes E^*) \to H^0(L) = V^*$.
We regard $v \in V$ as a map $V^* \to \Bbbk$,
so composing gives a bilinear map $H^0(E) \otimes H^0(L \otimes E^*) \to \Bbbk$.
The \defining{generalized catalecticant of $v$ with respect to $E$}
is the resulting map $C^E_v : H^0(E) \to H^0(L \otimes E^*)^*$.
That is, $C^E_v : s \mapsto (t \mapsto v(st))$.

The catalecticants $C^E_v$ and $C^{L \otimes E^*}_v$ are transposes of one another.

\begin{example}\label{example: realize classical catalecticants as generalized catalecticants}
The classical catalecticant $C^a_F$ for a homogeneous form $F$ corresponds to
$X = \PP V$, $L = \mathcal{O}_{\PP V}(d)$, and $E = \mathcal{O}_{\PP V}(a)$.
The embedding $X \subset \PP H^0(X,L)^* = \PP S^d V$ is the Veronese map, taking $X$ to $\nu_d(\PP V)$.

The multihomogeneous catalecticant $C^{\ba}_M$
for a multihomogeneous form $M$ of multidegree $\bd = (d_1,\dotsc,d_s)$
corresponds to the following.
Let $X = \PP V_1 \times \dotsb \times \PP V_s$
and for each $i$ let $\pr_i : X \to \PP V_i$ be the projection onto the $i$th factor.
Then $C^{\ba}_M$ corresponds to the line bundles
$L = \mathcal{O}_{\PP V_1 \times \dotsb \times \PP V_s}(d_1,\dotsc,d_s) = \pr_1^* \O_{\PP V_1}(d_1) \otimes \dotsb \otimes \pr_s^* \O_{\PP V_s}(d_s)$
and
$E = \mathcal{O}_{\PP V_1 \times \dotsb \times \PP V_s}(a_1,\dotsc,a_s)$.
The embedding of $X$ is the Segre--Veronese map, taking $X$ to
$\Seg(\nu_{d_1}(\PP V_1) \times \dotsm \times \nu_{d_s}(\PP V_s))$.
\end{example}

The following proposition is essentially just Propositions 5.1.1 and 5.4.1 in \cite{LO-apolar-catalecticant}.
(Their Proposition 5.4.1 is more general, working with multipoint restrictions rather than just one point.)
\begin{prop}
If $[v] = x \in X$, then $\rank C^E_v \leq \rank E$,
with equality if and only if both $E$ and $L \otimes E^*$ are globally spanned at $x$;
that is, the maps
$H^0(E) \to E_x$ and $H^0(L \otimes E) \to (L \otimes E^*)_x = L_x \otimes E^*_x$
are surjective.
For all $0 \neq v \in V$,
\begin{equation}\label{eq: LO bound}
  \r_X(v) \geq \frac{\rank C^E_v}{\rank E} .
\end{equation}
\end{prop}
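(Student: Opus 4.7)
The plan is to factor $C^E_v$ through the fiber $E_x$ and read both statements off this factorization.

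First I would unpack the identification $v \leftrightarrow x$. Since $V = H^0(X,L)^*$, the embedding $X \subset \PP V$ sends $x$ to the linear functional ``evaluate at $x$'' on $H^0(L)$, determined up to the choice of a trivialization $L_x \cong \Bbbk$. Fixing such a trivialization so that $v(\sigma) = \sigma(x) \in \Bbbk$ for all $\sigma \in H^0(L)$, for $s \in H^0(E)$ and $t \in H^0(L \otimes E^*)$ the product $st \in H^0(L)$ satisfies
\[
C^E_v(s)(t) = v(st) = (st)(x) = \langle s(x), t(x) \rangle,
\]
where $\langle -, - \rangle \colon E_x \otimes (L_x \otimes E^*_x) \to L_x \cong \Bbbk$ is the natural contraction. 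The right-hand side depends on $s$ only through $s(x)$, so $C^E_v$ factors as
\[
H^0(E) \xrightarrow{\mathrm{ev}^E_x} E_x \xrightarrow{\phi} H^0(L \otimes E^*)^*,
\qquad \phi(e)(t) = \langle e, t(x) \rangle .
\]

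From this factorization the bound $\rank C^E_v \leq \dim E_x = \rank E$ is immediate. For the equality clause, $\mathrm{ev}^E_x$ is surjective iff $E$ is globally spanned at $x$, and $\phi$ is (after identifying $L_x \cong \Bbbk$) the transpose of the evaluation $\mathrm{ev}^{L \otimes E^*}_x \colon H^0(L \otimes E^*) \to L_x \otimes E^*_x$ followed by the nondegenerate trace pairing with $E_x$; so $\phi$ is injective iff that evaluation is surjective, i.e.\ iff $L \otimes E^*$ is globally spanned at $x$. If both spanning conditions hold, the composition has full rank $\rank E$; if either fails, the image of $\mathrm{ev}^E_x$ is a proper subspace of $E_x$ or $\phi$ has nontrivial kernel, and a dimension count shows the overall rank strictly drops.

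For the inequality \eqref{eq: LO bound} for general $v$, I would exploit that $v \mapsto C^E_v$ is linear in $v$, since it comes from the fixed bilinear multiplication $H^0(E) \otimes H^0(L \otimes E^*) \to H^0(L) = V^*$ composed with the functional $v$. Given any $X$-decomposition $v = c_1 x_1 + \dotsb + c_r x_r$ realising $r = \r_X(v)$, linearity gives $C^E_v = \sum_i c_i C^E_{x_i}$; by the first part each $\rank C^E_{x_i} \leq \rank E$, and subadditivity of matrix rank yields $\rank C^E_v \leq r \cdot \rank E$, which rearranges to the claim. The essential content is the factorization; after that, nothing but elementary linear algebra is needed, and I do not foresee any obstacle.
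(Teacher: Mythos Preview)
Your proof is correct and follows essentially the same approach as the paper: both factor $C^E_v$ through the fiber $E_x$ via evaluation maps, read off the rank bound and equality condition from this factorization, and then use linearity of $v \mapsto C^E_v$ together with subadditivity of rank for the general inequality.
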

\begin{proof}
If $[v] = x \in X$,
$C^E_v$ corresponds to the bilinear functional
$H^0(E) \otimes H^0(L \otimes E^*) \to H^0(L) \to \Bbbk$
where the first map is multiplication of global sections and the second map
is evaluation at $x \in X$.
This commutes with restriction to the fiber at $x$, namely
$E_x \otimes (L \otimes E^*)_x \to L_x \cong \Bbbk$.
Therefore the map $C^E_v$ is equal to the composition of the restriction $H^0(E) \to E_x$,
followed by the identification
$E_x \cong L^*_x \otimes E_x \cong (L \otimes E^*)^*_x$
(up to a choice of scalar, i.e., a basis for $L_x \cong \Bbbk^1$),
followed by the transpose of the restriction map, $(L \otimes E^*)^*_x \to H^0(L \otimes E^*)^*$.
That is, $C^E_v$ is exactly the natural map
\[
  H^0(E) \to E_x \cong L^*_x \otimes E_x \cong (L \otimes E^*)^*_x \to H^0(L \otimes E^*)^* .
\]
Since $\dim (L \otimes E^*)_x = \dim E^*_x = \rank E$,
the composition of these maps has rank at most $\rank E$,
with equality if and only if the first map is surjective and the last is injective.

If $v = v_1 + \dotsb + v_r$ then $C^E_v = \sum C^E_{v_i}$.
This gives \eqref{eq: LO bound}.
\end{proof}

In particular, when $\r_X(v) = r$ then the size $(re+1)$ minors of $C^E_v$ vanish, where $e = \rank E$.
Thus the size $(re+1)$ minors of $C^E_v$ vanish on a dense subset of
the $r$th secant variety $\sigma_r(X)$,
hence on all of $\sigma_r(X)$.
Therefore these minors give equations for $\sigma_r(X)$.
One can thus obtain equations for secant varieties by producing suitable bundles $E$,
which is pursued in \cite{LO-apolar-catalecticant} by using representation theory.



More generally, let $\cE$ be an $\cO_X$-module whose fibers have bounded dimension:
that is, there is a positive integer $e$ such that for each $x \in X$,
the fiber $\cE_x = \cE \otimes \Bbbk(x)$ (where $\Bbbk(x) = \cO_{X,x}/\mathfrak{m}_{x}$)
has dimension at most $e$, as a vector space over $\Bbbk(x) \cong \Bbbk$
(since we have assumed $\Bbbk$ is algebraically closed).
Let $\cE^\vee = \cHom_{\cO_X}(\cE,\cO_X)$.
Then each fiber of $\cE^\vee$ also has vector space dimension at most $e$.
As before, the natural multiplication map $\cE \otimes \cE^\vee \to \cO_X$
yields $\cE \otimes (L \otimes \cE^\vee) \to L$,
which on global sections yields in turn $H^0(X,\cE) \otimes H^0(X,L \otimes \cE^\vee) \to H^0(L) = V^*$.
Once again our fixed $v \in V$ gives a map $V^* \to \Bbbk$,
so we have a bilinear functional $H^0(X,\cE) \otimes H^0(X,L \otimes \cE^\vee) \to \Bbbk$.
We define the \defining{generalized catalecticant of $v$ with respect to $\cE$}
to be $C^{\cE}_v : H^0(X,\cE) \to H^0(X,L \otimes \cE^\vee)^*$.

If $\cE$ is reflexive then $C^{\cE}_v$ and $C^{L \otimes \cE^\vee}_v$ are transposes of one another.

Once again if $[v] = x \in X$ then $C^{\cE}_v$ factors through restriction of global sections to the fiber at $x$:
\[
  H^0(X,\cE) \to \cE_x \to ((L \otimes \cE^\vee)_x)^* \to H^0(X,L \otimes \cE^\vee)^* .
\]
Hence if $[v] = x \in X$ then $\rank C^{\cE}_v \leq \dim \cE_x = \dim (L \otimes \cE^\vee)_x \leq e$,
with equality holding in the first inequality if and only if both $\cE$ and $L \otimes \cE^\vee$ are globally generated at $x$.
Therefore
\[
  \r_X(v) \geq \frac{\rank C^{\cE}_v}{e} .
\]

\begin{example}\label{example: subspace variety sheaf}
Let $X = \Sub_k \subset \PP S^d V$, the subspace variety of forms depending essentially on $k$ or fewer variables.
For $[F] \in X$, the catalecticant $C^{d-1}_F$ can be written as a matrix whose entries depend linearly
on the coefficients of $F$.
So we have a map of $\cO_X$-modules
\[
  C^{d-1}: S^{d-1} V^* \otimes \cO_X(-1) \to V \otimes \cO_X
\]
given by $C^{d-1}_F : S^{d-1} V^* \to V$ at each $[F] \in X$.
The $\Sub_k$ are degeneracy loci of the map $C^{d-1}$.
Then the kernel and cokernel of $C^{d-1}$ are naturally associated sheaves,
which are vector bundles on each $\Sub_k \setminus \Sub_{k-1}$.
It would be interesting to work out the global sections and catalecticants corresponding to these sheaves.
Perhaps they might be related to the ``Apolarity Lemma'' discussed below, see Theorem~\ref{thm: apolarity lemma carlini}.
\end{example}

\section{Improved lower bounds}\label{section: improved lower bounds}

An improvement to the catalecticant lower bound for classical Waring rank
was found in \cite{Landsberg:2009yq}.
It raises the lower bound $\rank C^a_F$ by the dimension of a certain set of singularities
of the hypersurface $V(F)$.
As our goal is to give a similar improvement more generally
we first recall that result.

\subsection{Classical Waring rank}

\begin{defn}
Let $F \in S^d V$ and $0 \leq a < d$.
We define $\Sigma_a(F) \subseteq \PP V^{*}$
to be the common zero locus of all the $a$th mixed partial derivatives of $F$;
that is, the projective variety defined by the (ideal generated by the) image of the catalecticant $C^a_F$.
As a set, $\Sigma_a(F)$ is the set of points at which $F$ vanishes to order at least $a+1$.

We write $\widehat{\Sigma}_a(F)$ for the affine cone over $\Sigma_a(F)$.
Explicitly, $\widehat{\Sigma}_a(F)$ is the common zero locus in $V^*$ of the $a$th mixed partial derivatives of $F$;
in particular, for $a<d$, the origin $0 \in \widehat{\Sigma}_a(F)$ even if $\Sigma_a(F)$ is empty.
\end{defn}
Thus $\Sigma_0(F)$ is the projective hypersurface $V(F)$
and $\Sigma_1(F)$ is the singular locus of $V(F)$,
similarly $\widehat\Sigma_0(F)$ is the affine hypersurface defined by $F$
and $\widehat\Sigma_1(F)$ is its singular locus.

Note that $F$ is concise (Definition~\ref{definition: concise}) if and only if $\widehat{\Sigma}_{d-1}(F) = \{0\}$,
equivalently $\Sigma_{d-1}(F) = \varnothing$.

\begin{thm}[Theorem 1.3 of \cite{Landsberg:2009yq}] \label{thm: single homogeneous polynomial}
Let $F \in S^d V$ and let $0 \leq a < d$.
If $F$ is concise
then
\begin{equation}\label{eq: LT bound}
  \r(F) \geq \rank C^{d-a}_F + \dim \widehat{\Sigma}_a(F) .
\end{equation}
\end{thm}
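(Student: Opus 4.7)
The plan is to start from a minimal Waring decomposition $F = \sum_{i=1}^r c_i \ell_i^d$ with $r = \r(F)$; by minimality every $c_i \neq 0$, and by conciseness the linear forms $\ell_1,\dotsc,\ell_r$ span $V$. The central linear algebra objects are $L = \Span(\ell_1^a,\dotsc,\ell_r^a) \subseteq S^a V$ and the space of dependence relations $K = \{(\alpha_i) \in \Bbbk^r : \sum_i \alpha_i \ell_i^a = 0\}$; they satisfy $\dim K + \dim L = r$. Using the basic identity $D_p(\ell^d) = \tfrac{d!}{a!}\ell^a p(\ell)$ for $p \in S^{d-a} V^*$, one reproduces the classical observation that the image of $C^{d-a}_F$ lies inside $L$, so $\rank C^{d-a}_F \leq \dim L = r - \dim K$. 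The theorem therefore reduces to proving the dimension inequality
\[
  \dim K \geq \dim \widehat{\Sigma}_a(F).
\]

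To manufacture relations in $K$ from points of the singular locus, I would consider the polynomial map $v : V^* \to \Bbbk^r$ given by $v(q) = (c_1 \ell_1(q)^{d-a}, \dotsc, c_r \ell_r(q)^{d-a})$, with each $\ell_i \in V$ regarded as a linear function on $V^*$. Applying the same identity to a differential operator $D \in S^a V^*$ gives $(DF)(q) = \tfrac{d!}{(d-a)!}\sum_i c_i \ell_i(q)^{d-a} D(\ell_i)$, so $q \in \widehat{\Sigma}_a(F)$ precisely when the functional $D \mapsto \sum_i c_i \ell_i(q)^{d-a} D(\ell_i)$ on $S^a V^*$ vanishes. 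Under the apolar pairing $S^a V^* \times S^a V \to \Bbbk$, this functional is represented (up to the factor $a!$) by $\sum_i c_i \ell_i(q)^{d-a} \ell_i^a \in S^a V$, so its vanishing is equivalent to $v(q) \in K$. Hence $v$ restricts to a morphism $v : \widehat{\Sigma}_a(F) \to K$.

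To turn this morphism into the desired dimension bound, I would analyze its differential. A direct computation gives $dv_q(w) = ((d-a) c_i \ell_i(q)^{d-a-1} \ell_i(w))_i$, so $\ker dv_q$ is the annihilator in $V^*$ of the subset $\{\ell_i : \ell_i(q) \neq 0\} \subset V$. At a point $q$ with $\ell_i(q) \neq 0$ for every $i$, conciseness makes this annihilator zero, so $dv_q$ is injective on all of $V^*$, and in particular on the Zariski tangent space $T_q \widehat{\Sigma}_a(F)$. Taking $q$ to be a smooth point of a top-dimensional irreducible component of $\widehat{\Sigma}_a(F)$, the image $dv_q(T_q \widehat{\Sigma}_a(F))$ is then a subspace of $T_{v(q)} K = K$ of dimension $\dim \widehat{\Sigma}_a(F)$, giving the required inequality.

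The technical step I expect to be the main obstacle is ensuring that such a point $q$ actually exists, i.e., that some maximal-dimensional component of $\widehat{\Sigma}_a(F)$ is not entirely contained in the union of hyperplanes $\bigcup_i V(\ell_i) \subseteq V^*$. I would first try to rule out the bad configuration directly using conciseness and minimality: an irreducible component of $\widehat{\Sigma}_a(F)$ lying inside a single $V(\ell_{i_0})$ ought to allow one to absorb $c_{i_0} \ell_{i_0}^d$ into the remaining summands along that component, leading either to a strictly shorter decomposition or to a reduction of the ambient space that contradicts conciseness. If that argument is too delicate, a standard fallback is to deform the decomposition into general position, carry out the differential argument on the deformation, and pass to the limit using lower semicontinuity of $\dim K$ and of $\dim \widehat{\Sigma}_a$ in families.
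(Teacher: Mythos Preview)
Your strategy differs from the paper's and can be made to work, but the step you flag as ``the main obstacle'' is a real gap, and neither proposed remedy is convincing: absorbing a summand ``along a component'' has no clear meaning, and deforming the decomposition is hopeless since minimal Waring decompositions are often unique (so there is nothing to deform). The clean fix is to drop the differential argument and observe that your map $v$ is \emph{finite}: it factors as $v = w \circ u$, where $u : V^* \to \Bbbk^r$, $q \mapsto (\ell_i(q))_i$, is an injective linear map (the $\ell_i$ span $V$ by conciseness), and $w : \Bbbk^r \to \Bbbk^r$, $(y_i) \mapsto (c_i y_i^{d-a})_i$, is finite since each $y_i$ is integral over $\Bbbk[c_1 y_1^{d-a}, \dotsc, c_r y_r^{d-a}]$. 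Finite morphisms preserve dimension, so $\dim K \geq \dim v(\widehat{\Sigma}_a(F)) = \dim \widehat{\Sigma}_a(F)$, which completes your reduction without ever needing a point of $\widehat{\Sigma}_a(F)$ avoiding the hyperplanes $V(\ell_i)$.

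The paper avoids this issue altogether by working dually inside $\PP S^{d-a} V^*$. It takes $\mathcal{L}$ to be the linear system of degree $d-a$ forms vanishing at all $[\ell_i]$, observes $\mathcal{L} \subseteq \ker C^{d-a}_F$, and uses conciseness directly to show that $\PP\mathcal{L}$ misses the Veronese $\nu_{d-a}(\PP V^*)$: no power $h^{d-a}$ can vanish at every $[\ell_i]$, since the $[\ell_i]$ do not lie on a hyperplane. Combining this with the identification $\PP\ker C^{d-a}_F \cap \nu_{d-a}(\PP V^*) \cong \Sigma_a(F)$, a projective dimension count for two disjoint closed subvarieties of $\PP\ker C^{d-a}_F$ gives the bound immediately. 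Your route is more constructive (it exhibits an explicit map into the relation space $K$); the paper's is shorter and needs no auxiliary finiteness statement.
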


In \cite{Landsberg:2009yq} this is stated as
\[
  \r(F) \geq \rank C^{d-a}_F + \dim \Sigma_a(F) + 1,
\]
with the understanding that $\dim \varnothing = -1$.
Although the affine cone version \eqref{eq: LT bound} is simpler
(and avoids conventions about negative dimensions),
this ``projective'' one is the version that will generalize.


We review the proof given in \cite{Landsberg:2009yq}.
First, recall that the differential operator $D_p$ associated to $p \in S^{d-a} V^*$
satisfies $D_p(\ell^d) = \left(\frac{d!}{a!}\right) \ell^a \, p(\ell)$,
so $D_p(\ell^d) = 0$ if and only if $p([\ell]) = 0$.
We begin with the following well-known statement (see, for example, \cite[Proposition 4.1]{MR1215329}).

\begin{prop}\label{prop: catalecticant kernel veronese intersection}
Let $h \in V^*$.
Then $h^{d-a} \in \ker C^{d-a}_F$ if and only if $h \in \widehat{\Sigma}_a(F)$.
\end{prop}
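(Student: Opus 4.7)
The plan is to recast $h^{d-a}\in\ker C^{d-a}_F$ via the duality between the catalecticants $C^{d-a}_F$ and $C^a_F$. Since these two maps are transposes (as noted in the remark at the end of Section~\ref{section: catalecticants - classical Waring rank}), the kernel of $C^{d-a}_F$ is the annihilator, with respect to the natural pairing $S^{d-a}V^*\times S^{d-a}V\to\Bbbk$, of the image of $C^a_F$. Hence $h^{d-a}\in\ker C^{d-a}_F$ exactly when $\langle h^{d-a},pF\rangle=0$ for every $p\in S^aV^*$.

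The key computational step is the identity
$$\langle h^{d-a},Q\rangle=(d-a)!\,Q(h)\qquad\text{for every }Q\in S^{d-a}V,$$
where $Q(h)$ denotes evaluation of the polynomial $Q$ at the point in $V^*$ whose coordinates are those of $h=c_1\d_1+\cdots+c_n\d_n$. I would verify this by expanding $h^{d-a}=\sum_{|\alpha|=d-a}\binom{d-a}{\alpha}c^\alpha \d^\alpha$ via the multinomial theorem and applying each $\d^\alpha$ to a monomial $x^\beta$ with $|\beta|=d-a$, getting $(d-a)!\,c^\beta$; the identity follows by linearity.

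Combining these, $h^{d-a}\in\ker C^{d-a}_F$ if and only if $(pF)(h)=0$ for all $p\in S^aV^*$. As $p$ ranges over $S^aV^*$, the polynomials $pF$ span exactly the space of $a$-th mixed partial derivatives of $F$, so the condition is precisely that every $a$-th partial derivative of $F$ vanishes at $h$ — that is, $h\in\widehat\Sigma_a(F)$ by definition.

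The only step that requires a small check is the pairing identity $\langle h^{d-a},Q\rangle=(d-a)!\,Q(h)$, which is the standard apolarity duality between constant-coefficient differential operators and polynomials of matching degree; it is routine and presents no real obstacle. Everything else in the argument is bookkeeping about transposes and definitions. The conclusion is essentially \cite[Prop.~4.1]{MR1215329} as cited, so the aim of the plan is simply to assemble the elementary ingredients into a clean argument ready to be used in the proof of Theorem~\ref{thm: single homogeneous polynomial}.
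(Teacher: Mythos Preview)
Your proposal is correct and is essentially the same argument as the paper's. The paper phrases the key step as commutativity of differential operators ($\Theta h^{d-a}F = h^{d-a}\Theta F$, then noting that applying $h^{d-a}$ to the degree-$(d-a)$ form $\Theta F$ is evaluation at $h$), while you package the same computation as ``$\ker C^{d-a}_F = (\img C^a_F)^\perp$'' via transposition; indeed, the remark immediately following the proposition in the paper restates the proof in exactly your terms.
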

\begin{proof}
$h^{d-a} \in \ker C^{d-a}_F$ if and only if $h^{d-a} F = 0$,
if and only if $\Theta h^{d-a} F = 0$ for all $\Theta \in S^a V^*$,
if and only if $h^{d-a} \Theta F = 0$ for all $\Theta$.
Since $\deg \Theta F = d-a$, $h^{d-a} \Theta F$ is just the evaluation of $\Theta F$ at the point $h$,
$h^{d-a} \Theta F = \Theta F \big|_h$ (up to a factorial factor).
So $h^{d-a} \in \ker C^{d-a}_F$ if and only if $\Theta F \big|_h = 0$ for all $\Theta \in S^a V^*$,
if and only if $F$ vanishes to order at least $a+1$ at $h$,
if and only if $h \in \widehat{\Sigma}_a(F)$.
\end{proof}

\begin{remark}
Proposition~\ref{prop: catalecticant kernel veronese intersection} verges on tautology:
$h$ is in $\widehat{\Sigma}_a(F)$ if and only if the $a$th derivatives of $F$ vanish at $h$,
if and only if $h$ is a common zero of the forms in the image of $C^a_F$,
if and only if $h^{d-a} \in (\img C^a_F)^\perp = \ker C^{d-a}_F$.
\end{remark}

\begin{proof}[Proof of Theorem \ref{thm: single homogeneous polynomial}]
Suppose $F = \ell_1^d + \dotsb + \ell_r^d$.
Let $\mathcal{L} = \{ p \in S^{d-a} V^* : p([\ell_1]) = \dotsb = p([\ell_r]) = 0 \}$,
the linear series of degree $d-a$ forms vanishing at the points $[\ell_i] \in \PP V$.
Since $D_p(\ell_i^d) = 0$ if and only if $p([\ell_i]) = 0$,
we have $\mathcal{L} \subseteq \ker C^{d-a}_F$.
This shows that
\[
  r \geq \codim \mathcal{L} \geq \codim \ker C^{d-a}_F = \rank C^{d-a}_F ,
\]
the first inequality holding since $\mathcal{L}$ is defined by $r$ linear conditions.

Now we use the hypothesis that $F$ is concise.
Because of this, the $[\ell_i]$ can not lie on a hyperplane,
or else $F$ could be written in fewer variables.
Hence $\mathcal{L}$ can not contain any power $p = h^{d-a}$ of a linear form $h$;
for if $h^{d-a} \in \mathcal{L}$, then $h^{d-a}([\ell_i]) = 0$ for each $i$,
so $h([\ell_i]) = 0$ for each $i$, and the $[\ell_i]$ would lie on the hyperplane defined by $h$,
a contradiction.
Therefore the projectivization $\PP \mathcal{L}$ is disjoint from the
Veronese $v_{d-a}(\PP V^*)$.
Since $\mathcal{L} \subseteq \ker C^{d-a}_F$,
we disregard everything lying outside this kernel
and observe that $\PP \mathcal{L}$ is disjoint from $\PP \ker C^{d-a}_F \cap v_{d-a}(\PP V^*)$.
Thus
\[
  \dim(\PP \mathcal{L}) + \dim(\PP \ker C^{d-a}_F \cap v_{d-a}(\PP V^*)) < \dim \PP \ker C^{d-a}_F ,
\]
or else a nonempty intersection would be forced.
We subtract each side of this inequality from the ambient dimension $\dim \PP S^{d-a} V^*$
and rearrange to get
\[
  r \geq \codim \mathcal{L} = \codim(\PP \mathcal{L})
    > \rank C^{d-a}_F + \dim(\PP \ker C^{d-a}_F \cap v_{d-a}(\PP V^*)) .
\]
Finally $\PP \ker C^{d-a}_F \cap v_{d-a}(\PP V^*) \cong \Sigma_a(F)$
by Proposition~\ref{prop: catalecticant kernel veronese intersection}.
\end{proof}

Though we have not emphasized it here, reducedness plays a key role in the above proof,
see Remark~\ref{remark: reducedness used in proof}.
This will be important when we consider ``scheme'' statements
in Sections \ref{section: apolarity lemmas} and \ref{section: ranestad-schreyer bounds}.

\begin{remark}
Since $C^a_F$ and $C^{d-a}_F$ are transposes of one another,
we could of course say $\r(F) \geq \rank C^a_F + \dim \widehat\Sigma_a(F)$.
By Remark~\ref{remark: nonunimodality of catalecticant ranks},
it is impossible (or at least difficult) to say for which $a$ the maximum of $\rank C^a_F$ occurs.
On the other hand, since $\Sigma_0(F) \supseteq \Sigma_1(F) \supseteq \dotsb$,
the dimensions of $\widehat\Sigma_a(F)$ are clearly nonincreasing.
A priori the maximum value of $\rank C^a_F + \dim \widehat\Sigma_a(F)$ could occur at a different $a$
than the maximum value of $\rank C^a_F$,
but I do not know any example in which this happens.
\end{remark}


\begin{example}\label{example: product of variables singularity improvement}
We saw $r(xyz) \leq 4$ and $\rank C^1_{xyz} = 3$ in Example~\ref{example: product of variables catalecticant}.
We have $\Sigma_1(xyz) = \Sing V(xyz) = \{[1:0:0], [0:1:0], [0:0:1]\}$
so $\dim \Sigma_1(xyz) = 0$,
$\dim \widehat{\Sigma}_1(xyz) = 1$.
Therefore $r(xyz) \geq 3 + 1 = 4$.
\end{example}

\begin{example}\label{example: determinant singularity improvement}
We saw $\rank C^a_{\det_n} = \binom{n}{a}^2$ in Example~\ref{example: determinant catalecticant}.
And $\det_n$ vanishes to order $a+1$ at a point (matrix) $M$ if and only if every $(n-a)$-minor of $M$ vanishes,
that is, $M$ has rank $n-a-1$ or less.
The dimension of the locus of matrices of rank $n-a-1$ or less is $n^2 - 1 - (a+1)^2$.
So, for each $a$, $\r(\det_n) \geq \binom{n}{a}^2 + n^2 - (a+1)^2$.
This is maximized at $a = \lfloor n/2 \rfloor$,
so $\r(\det_n) \geq \binom{n}{\lfloor n/2 \rfloor}^2 + n^2 - (\lfloor n/2 \rfloor + 1)^2$.

For instance, $\r(\det_3) \geq 14$.
See \cite[\textsection9]{Landsberg:2009yq}.
For other bounds on $\r(\det_n)$ and ranks of permanents, Pfaffians, etc.,
see \cite{Shafiei:ud}, \cite{Shafiei:2013fk}.
\end{example}

\begin{example}\label{example: stanley nonunimodal improvement}
We consider Stanley's nonunimodal example, see Example~\ref{example: stanley nonunimodal}.
Since $F$ vanishes to order $3$ on the linear subspace $x=y=z=0$,
$\widehat\Sigma_2(F) \supseteq V(x,y,z)$, a $10$-dimensional subspace.
On the other hand, $\widehat\Sigma_2(F) \subseteq \widehat\Sigma_1(F) \subseteq V(x,y,z)$,
since $\d_{t_1} F = x^3$ so $\d_{t_1} F = 0$ implies $x=0$, and so on.
So $\widehat\Sigma_1(F) = \widehat\Sigma_2(F) = V(x,y,z)$.
Thus $\r(F) \geq \rank C^2_F + \dim \widehat\Sigma_2(F) = 22$,
and better, $\r(F) \geq \rank C^1_F + \dim \widehat\Sigma_1(F) = 23$.
%
\end{example}

\begin{example}\label{example: bernstein-iarrobino nonunimodal improvement}
We consider the nonunimodal example of Bernstein--Iarrobino, see Example~\ref{example: bernstein-iarrobino nonunimodal}.
Since $F$ vanishes to order $15$ on the subspace $x=y=z=0$,
$\dim \widehat\Sigma_a(F) \geq 2$ for $0 \leq a \leq 14$.

Now we examine $\widehat\Sigma_1(F)$.
It contains the $2$-plane $x=y=z=0$.
Suppose $(x,y,z) \neq (0,0,0)$ and $p = (x,y,z,s,t) \in \widehat\Sigma_1(F)$.
Since $\d_s F = \d_t F = 0$, the projective point $q = [x:y:z]$ must lie in the complete intersection $G=H=0$,
a finite set of $225$ points.
Furthermore the plane curve $sG + tH$ must be singular at $q$.
We have
\[
  (\d_x F, \d_y F, \d_z F) = s \nabla G + t \nabla H .
\]
Since $G, H$ are general, they intersect transversally at $q$, with independent gradients.
This forces $s=t=0$.
So $\widehat\Sigma_1(F)$ is the union of the $2$-plane $x=y=z=0$
and $225$ lines with $s=t=0$, spanned by the points in the complete intersection $G=H=0$.
Therefore $\dim \widehat\Sigma_a(F) = 2$ for $0 \leq a \leq 14$.

Hence $\r(F) \geq \rank C^7_F + \dim \widehat\Sigma_7(F) = 93$.
%
\end{example}

\begin{remark}
The last examples may be generalized to other members of the family described in \cite{MR1227512},
the family that includes both the Bernstein--Iarrobino and the Stanley nonunimodal examples.
We leave this generalization to the reader.
\end{remark}

\subsection{Simultaneous Waring rank}

\begin{defn}
Let $W \subseteq S^d V$ be a linear series of degree $d$ forms.
For $0 \leq a$, let $\Sigma_{(1,a)}(W) = \bigcap_{F \in W} \Sigma_a(F)$.
That is, $\Sigma_{(1,a)}(W)$ is the locus of points in $\PP V^*$
at which every member of the linear series $W$ vanishes to order at least $a+1$.
As a scheme, $\Sigma_{(1,a)}(W)$ is defined by the vanishing of all the $a$th partial derivatives
of all the members of $W$; that is, by the image of the map $C^{(1,a)}_W$.
\end{defn}

\begin{defn}
A linear series $W \subset S^d V$ is called \defining{concise} (with respect to $V$)
if it satisfies the following equivalent conditions:
\begin{enumerate}
\item $W$ cannot be written as a linear series in a smaller number of variables;
that is, if $W \subseteq S^d V'$ for some $V' \subseteq V$, then $V' = V$.
\item If $V' \subset V$ is such that for every $F \in W$, $V(F)$ is a cone over $\PP V'$,
then $V' = 0$.
\item The set $\widehat{\Sigma}_{(1,d-1)}(W) = \{0\}$.
\item A general $F \in W$ is concise.
\item $C^{(1,d-1)}_W$ is surjective, $C^{(0,1)}_W$ is injective.
\end{enumerate}
\end{defn}

\begin{thm}\label{thm: linear series improvement first version}
Let $W \subseteq S^d V$ and $0 \leq a < d$.
If $W$ is concise then
\[
  r(W) \geq \rank C^{(0,d-a)}_W + \dim \widehat{\Sigma}_{(1,a)}(W) .
\]
\end{thm}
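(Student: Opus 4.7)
The plan is to imitate the proof of Theorem~\ref{thm: single homogeneous polynomial}, replacing the single form $F$ by the linear series $W$ throughout. Suppose $\{\ell_1,\dotsc,\ell_r\}$ is a simultaneous power sum decomposition of $W$, so $W \subseteq \Span\{\ell_1^d,\dotsc,\ell_r^d\}$. Define
\[
  \mathcal{L} = \{p \in S^{d-a} V^* : p([\ell_1]) = \dotsb = p([\ell_r]) = 0\},
\]
the linear series of $(d-a)$-forms vanishing at the $r$ points $[\ell_i] \in \PP V$. Since $D_p(\ell_i^d) = 0$ whenever $p([\ell_i]) = 0$, each $p \in \mathcal{L}$ annihilates every $\ell_i^d$ and hence every element of $W$, so $\mathcal{L} \subseteq \ker C^{(0,d-a)}_W$. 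The subspace $\mathcal{L}$ is cut out by $r$ linear conditions, giving $r \geq \codim \mathcal{L}$.

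The next step is to establish the linear-series analog of Proposition~\ref{prop: catalecticant kernel veronese intersection}: for $h \in V^*$, $h^{d-a} \in \ker C^{(0,d-a)}_W$ if and only if $h \in \widehat{\Sigma}_{(1,a)}(W)$. By definition, $h^{d-a} \in \ker C^{(0,d-a)}_W$ says $D_{h^{d-a}} F = 0$ for every $F \in W$, which by the original proposition applied member-by-member is equivalent to $h \in \widehat{\Sigma}_a(F)$ for every $F \in W$, i.e., $h \in \widehat{\Sigma}_{(1,a)}(W)$. Via the Veronese map this gives
\[
  \PP \ker C^{(0,d-a)}_W \cap v_{d-a}(\PP V^*) \cong \Sigma_{(1,a)}(W).
\]

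Conciseness of $W$ enters exactly as before. If $\mathcal{L}$ contained a power $h^{d-a}$ with $h \neq 0$, then $h([\ell_i]) = 0$ for every $i$, forcing every $\ell_i$ into the hyperplane $\{h = 0\} \subset V$. But then $W \subseteq \Span\{\ell_i^d\} \subseteq S^d V'$ for the proper subspace $V' = \{h=0\}$, contradicting conciseness. Therefore $\PP \mathcal{L}$ is disjoint from $v_{d-a}(\PP V^*)$, and in particular disjoint from the intersection $\PP \ker C^{(0,d-a)}_W \cap v_{d-a}(\PP V^*) \cong \Sigma_{(1,a)}(W)$; both sit inside the ambient $\PP \ker C^{(0,d-a)}_W$, and disjointness forces
\[
  \dim \PP \mathcal{L} + \dim \Sigma_{(1,a)}(W) < \dim \PP \ker C^{(0,d-a)}_W.
\]
Subtracting from $\dim \PP S^{d-a} V^*$ turns this into $\codim \mathcal{L} > \rank C^{(0,d-a)}_W + \dim \Sigma_{(1,a)}(W)$, and since $\codim \mathcal{L}$ is an integer and $\dim \widehat{\Sigma}_{(1,a)}(W) = \dim \Sigma_{(1,a)}(W) + 1$, this yields $r \geq \codim \mathcal{L} \geq \rank C^{(0,d-a)}_W + \dim \widehat{\Sigma}_{(1,a)}(W)$, as required.

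There is no genuinely new geometric obstacle beyond the single-form case; the plan amounts to checking that each ingredient of the original proof transports verbatim from $F$ to $W$. The one place requiring slightly more care is the conciseness step: for a single concise form $F$ the nondegeneracy of $\{[\ell_i]\}$ is almost automatic, whereas for $W$ one must invoke the definition of conciseness for a linear series (specifically condition (1)) to deduce from $W \subseteq \Span\{\ell_i^d\}$ that the $\ell_i$ cannot collectively lie on a hyperplane.
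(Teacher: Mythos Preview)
Your proof is correct and follows essentially the same approach as the paper's own proof: define $\mathcal{L}$ as the $(d-a)$-forms vanishing at the $[\ell_i]$, show $\mathcal{L} \subseteq \ker C^{(0,d-a)}_W$, use conciseness to get disjointness of $\PP\mathcal{L}$ from the Veronese, and identify $\PP\ker C^{(0,d-a)}_W \cap \nu_{d-a}(\PP V^*)$ with $\Sigma_{(1,a)}(W)$ via the member-by-member application of Proposition~\ref{prop: catalecticant kernel veronese intersection}. The only difference is that you spell out the conciseness step a bit more explicitly, which is fine.
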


\begin{proof}
Suppose $W$ is contained in the span of $\ell_1^d,\dotsc,\ell_r^d$.
Let $\mathcal{L}$ be the linear series of degree $d-a$ forms vanishing at the $[\ell_i]$.
Then $\mathcal{L} \subseteq \ker C^{(0,d-a)}_W$.
Indeed for $\Theta \in \cL$ and $F \in W$, say $F = \sum c_i \ell_i^d$,
we have $C^{(0,d-a)}_W(\Theta)(F) = \Theta F = \sum c_i \Theta \ell_i^d = 0$.
This already shows
$r \geq \codim \mathcal{L} \geq \codim \ker C^{(0,d-a)}_W = \rank C^{(0,d-a)}_W$.

By hypothesis the $[\ell_i]$ do not lie on any hyperplane,
so $\mathcal{L}$ does not contain any power of a hyperplane $h^{d-a}$.
Thus $\mathcal{L}$ is disjoint from the Veronese variety $\nu_{d-a}(\PP V^*)$.
As before,
$r \geq \codim \mathcal{L} > \codim \ker C^{(0,d-a)}_W + \dim (\PP \ker C^{(0,d-a)}_W \cap \nu_{d-a}(\PP V^*))$.

We claim that $\PP \ker C^{(0,d-a)}_W \cap \nu_{d-a}(\PP V^*) \cong \Sigma_{(1,a)}(W)$.
Let $h \in \PP V^*$.
Then $h^{d-a} \in \ker C^{(0,d-a)}_W$ if and only if $h^{d-a} F = 0$ for all $F \in W$,
if and only if for all $F \in W$, $F$ vanishes at $[h]$ to order at least $a+1$,
if and only if $[h] \in \bigcap_{F \in W} \Sigma_a(F) = \Sigma_{(1,a)}(W)$.
\end{proof}

%

\begin{example}\label{example: matrix linear series singularity improvement}
Fix a generic $m \times n$ matrix $X = (x_{i,j})$, $1 \leq i \leq m$, $1 \leq j \leq n$.
Let $D_k, P_k, R_k$ be as in Example~\ref{example: matrix linear series}.
The image of $C^{(1,a)}_{D_k}$ is $D_{k-a}$, and similarly for $P_k$ and $R_k$,
by Example~\ref{example: matrix linear series catalecticants},
which also contains the ranks of those catalecticants.

Thus $\Sigma_{(1,a)}(D_k)$ is the common zero locus of $D_{k-a}$,
the locus of matrices of rank $\leq k-a-1$.
This has dimension $mn-(m-(k-a-1))(n-(k-a-1)) = (k-a-1)(m+n-k+a+1)$.
Therefore $r(D_k) \geq \binom{m}{k-a}\binom{n}{k-a} + (k-a-1)(m+n-k+a+1)$.

$\Sigma_{(1,a)}(P_k)$ is the common zero locus of $P_{k-a}$,
the locus of matrices all of whose $(k-a) \times (k-a)$ submatrices have permanent zero.
This includes at least all matrices with only $k-a-1$ nonzero columns or $k-a-1$ nonzero rows,
so $\dim \Sigma_{(1,a)}(P_k) \geq \max(m,n)(k-a-1)$
and $r(P_k) \geq \binom{m}{k-a}\binom{n}{k-a} + \max(m,n)(k-a-1)$.
See \cite{MR1673948}, \cite{MR1777172}, \cite{MR2386244}, \cite[\textsection5.4]{MR1925796}
for more on the zero locus of permanental ideals.

We treat $R_k$ separately in the next example.
\end{example}

\begin{example}\label{example: rookfree linear series singularity improvement}
Fix a generic $m \times n$ matrix $X = (x_{i,j})$, $1 \leq i \leq m$, $1 \leq j \leq n$,
and let $R_k$ be as in Example~\ref{example: matrix linear series}.
As in the previous example, $\Sigma_{(1,a)}(R_k)$ is the common zero locus of $R_{k-a}$.
We claim that $\dim \Sigma_{(1,a)}(R_k) = \dim V(R_{k-a}) = \max(m,n)(k-a-1)$.

Note the ideal $\langle R_{t} \rangle$ is a squarefree monomial ideal,
so it can be written as the Stanley-Reisner ideal $I_\Delta$
of a simplicial complex $\Delta$ on the entries of $X$, i.e., the $x_{i,j}$, $1 \leq i \leq m$, $1 \leq j \leq n$.
That is, $\langle R_t \rangle$ is generated by the minimal nonfaces of $\Delta$;
see for example \cite[\textsection1.1]{MR2110098}.
We claim that $\langle R_t \rangle$ is the Stanley-Reisner ideal of the
simplicial complex $\Delta(t)$ whose facets are given by
unions of $t-1$ rows and columns of $X$, that is, each facet is
the union of $a$ rows and $b$ columns where $a+b = t-1$.

Granting that $\langle R_t \rangle = I_{\Delta(t)}$,
the largest facets of $\Delta(t)$ include $\max(m,n)(t-1)$ entries of $X$,
so their complements define linear subspaces of that dimension;
all other facets correspond to smaller components,
showing that $\dim V(R_t) = \max(m,n)(t-1)$, as desired.

Clearly $\langle R_1 \rangle = I_{\Delta(1)}$ (trivially).
It holds as well for $t=2$: a square-free product contains two rook-free elements if and only if it does not
lie on a ``union'' of one row or of one column.

A square-free monomial $M$ corresponds to a subset of the entries of the matrix,
or to the set of edges in a subgraph $G = G(M)$ of the complete bipartite graph $K_{m,n}$.
Recall K\"onig's Theorem (see, e.g., \cite[Thm.~2.1.1]{MR2744811}), which asserts
that the smallest size of a vertex cover of the bipartite graph $G$
is equal to the largest size of a matching.
A matching in $G$ (a set of pairwise non-adjacent edges) corresponds to a rook-free product contained in $M$.
A vertex cover (a set of vertices incident to every edge) corresponds to a set of rows and columns whose union contains $M$.
Thus a monomial $M$ lies in $R_t$ if and only if $M$ contains a rook-free product of degree $t$,
if and only if $G(M)$ has a matching of size at least $t$, if and only if $G(M)$ has no vertex cover of size less than $t$,
if and only if $M$ is not contained in any union of $t-1$ rows and columns.

In conclusion, $\dim V(R_t) = \max(m,n)(t-1)$,
\[
  \dim \Sigma_{(1,a)}(R_k) = \dim V(R_{k-a}) = \max(m,n)(k-a-1) ,
\]
and
\[
  \r(R_k) \geq \binom{m}{k-a}\binom{n}{k-a}(k-a)! + \max(m,n)(k-a-1) ,
\]
for $1 \leq a \leq k$.
\end{example}


\subsection{Simultaneous Waring rank, second version}

Now we describe an improvement to the bound $\r(W) \geq \rank C^{(0,a)}_W$.
For the first time, we need to consider the projective version of the singular set $\Sigma$
(rather than the affine version $\widehat\Sigma$).

\begin{defn}
Let $W \subseteq S^d V$ be a linear series of degree $d$ forms.
For $0 \leq a < d$, let $\Sigma_{(0,a)}(W) = \{([F],[p]) \in \PP W \times \PP V^* \mid [p] \in \Sigma_a(F)\}$.
\end{defn}

Thus $\Sigma_{(0,a)}(W)$ is the scheme defined by the image of the map $C^{(0,a)}_W$.
Indeed, the correspondence $\Hom(W,S^{d-a} V) \cong W^* \otimes S^{d-a} V$ takes a map $\Theta: W \to S^{d-a} V$ to
$f_\Theta \in W^* \otimes S^{d-a} V$,
the function on $W \times S^{d-a} V^*$ defined by $f_\Theta(F,\psi) = \langle \Theta F, \psi \rangle = \psi \Theta F \in \Bbbk$.
In particular for $h \in V^*$, $f_\Theta(F,h^{d-a}) = \Theta F \big|_h$,
the evaluation of $\Theta F$ at the point $h$ (up to a factorial factor).
So $f_\Theta$ vanishes at a point $(F,p) \in W \times V^* \hookrightarrow W \times S^{d-a} V^*$
if and only if $\Theta F$ vanishes at $p$.
In our case, a point $([F],[p])$ is a common zero of every element in the image of $C^{(0,a)}_W$
if and only if $\Theta F \big|_p = 0$ for every $\Theta \in S^a V^*$, equivalently $[p] \in \Sigma_a(F)$.

In order to improve $\r(W) \geq \rank C^{(1,a)}_W$, we needed to assume that the map $C^{(1,d-1)}_W$ was surjective,
equivalently that $W$ was concise.
Now we will have to assume that the map $C^{(0,d-1)}_W : S^{d-1} V^* \to \Hom(W,V) \cong W^* \otimes V$ is surjective.
This is considerably stronger.
If the map $C^{(0,d-1)}_W$ is surjective, then every nonzero $F \in W$ is concise
(for every nonzero linear form $\ell \in V$ there is a linear map $W \to V$ taking $F \mapsto \ell$,
and it is realized as $C^{(0,d-1)}_W \Theta$ for some $\Theta \in S^{d-1} V^*$, which means $\Theta F = \ell$;
so $C^{d-1}_F$ is surjective).
And of course if every nonzero $F \in W$ is concise (and $W$ is nontrivial) then $W$ is concise.

Clearly having $W$ concise does not imply that every $F \in W$ is concise:
the linear series in Example~\ref{example: matrix linear series} are concise
but are spanned by forms (determinant, permanent, rook-free product) that only depend on
the variables in a submatrix.
And having every nonzero $F \in W$ be concise does not imply that $C^{(0,d-1)}_W$ is surjective.
Let $n=2$ and let $W$ be the pencil spanned by $x^3 y^2, x^2 y^3$.
Every member of $W$ is of the form $x^2 y^2 (ax+by)$ and this is concise because it is not a perfect power.
But $C^{(0,4)}_W$ is not surjective.
If $\Theta = a_4 \d_x^4 + a_3 \d_x^3 \d_y + \dotsb + a_0 \d_y^4$ has $\Theta x^3 y^2 = 0$ then $a_3 = a_2 = 0$,
and then $\Theta x^2 y^3 = 6a_1 x$; so there is no $\Theta \in S^4 V^*$ such that $\Theta x^3 y^2 = 0$, $\Theta x^2 y^3 = y$.

So $C^{(0,d-1)}_W$ being surjective is a strong condition.
Nevertheless it can be met.
For example, let $n=2$ and let $W$ be the pencil spanned by $x^4 y^2$, $x^2 y^4$.
Then we have
\[
\begin{array}{ll}
  (\d_x^3 \d_y^2)(x^4 y^2) = 48 x, & (\d_x^3 \d_y^2)(x^2 y^4) = 0, \\
  (\d_x^4 \d_y)(x^4 y^2) = 48 y, & (\d_x^4 \d_y)(x^2 y^4) = 0, \\
  (\d_x \d_y^4)(x^4 y^2) = 0,  & (\d_x \d_y^4)(x^2 y^4) = 48 x, \\
  (\d_x^2 \d_y^3)(x^4 y^2) = 0, & (\d_x^2 \d_y^3)(x^2 y^4) = 48 y, \\
\end{array}
\]
which shows that $C^{(0,5)}_W$ is surjective onto $\Hom(W,V)$.

\begin{thm}\label{thm: linear series improvement second version}
Let $W \subseteq S^d V$ and $0 \leq a < d$.
If $C^{(0,d-1)}_W$ is surjective then
\[
  \r(W) \geq \rank C^{(1,d-a)}_W + \dim \Sigma_{(0,a)}(W) + 1,
\]
where $\dim \varnothing = -1$.
\end{thm}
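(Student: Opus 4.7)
The plan is to adapt the proof of Theorem~\ref{thm: single homogeneous polynomial} to the ambient space $\PP(W \otimes S^{d-a} V^*)$, with the Segre--Veronese embedding $\PP W \times \PP V^* \hookrightarrow \PP(W \otimes S^{d-a} V^*)$ defined by $([F],[h]) \mapsto [F \otimes h^{d-a}]$ playing the role that the Veronese played in the classical proof. The starting observation, immediate from Proposition~\ref{prop: catalecticant kernel veronese intersection} applied to individual members of $W$, is that a decomposable tensor of the form $F \otimes h^{d-a}$ lies in $\ker C^{(1,d-a)}_W$ if and only if $\Theta F$ vanishes at $h$ for every $\Theta \in S^a V^*$, if and only if $([F],[h]) \in \Sigma_{(0,a)}(W)$. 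Consequently the set-theoretic intersection of $\PP \ker C^{(1,d-a)}_W$ with the Segre--Veronese is the embedded image of $\Sigma_{(0,a)}(W)$, and hence has dimension $\dim \Sigma_{(0,a)}(W)$.

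Now suppose $W \subseteq \Span\{\ell_1^d,\dotsc,\ell_r^d\}$ with $r = \r(W)$. By Remark~\ref{remark: coefficient uniqueness in simultaneous decomposition}, the coefficients $c_i(F)$ in $F = \sum_i c_i(F)\,\ell_i^d$ are uniquely determined, so each $c_i$ is a well-defined element of $W^*$. For each $i$ I define a linear functional $e_i : W \otimes S^{d-a} V^* \to \Bbbk$ on simple tensors by $e_i(F \otimes \Theta) = c_i(F)\,\Theta(\ell_i)$, and set $\mathcal{L} = \bigcap_{i=1}^r \ker e_i$, so that $\codim \mathcal{L} \leq r$. Using the standard identity $\Theta \ell_i^d = \tfrac{d!}{a!}\,\ell_i^a\,\Theta(\ell_i)$ for $\Theta \in S^{d-a} V^*$, a direct computation yields $C^{(1,d-a)}_W(T) = \tfrac{d!}{a!}\sum_{i=1}^r e_i(T)\,\ell_i^a$ for every $T \in W \otimes S^{d-a} V^*$, and therefore $\mathcal{L} \subseteq \ker C^{(1,d-a)}_W$.

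The main obstacle is to show that $\PP \mathcal{L}$ is disjoint from the Segre--Veronese variety; this is where the hypothesis on $C^{(0,d-1)}_W$ enters. If $F \otimes h^{d-a}$ lay in $\mathcal{L}$ with $F \in W$ and $h \in V^*$ both nonzero, then for every $i$ we would have $c_i(F)\,h(\ell_i)^{d-a} = 0$, forcing $c_i(F) = 0$ whenever $h(\ell_i) \neq 0$; hence $F = \sum_{h(\ell_i)=0} c_i(F)\,\ell_i^d$ lies in $S^d(\ker h)$, so $F$ depends essentially on at most $n-1$ variables. But surjectivity of $C^{(0,d-1)}_W$ forces every nonzero element of $W$ to be concise, as noted in the discussion preceding the theorem, giving the required contradiction. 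With the three ingredients in hand---$\codim \mathcal{L} \leq r$, $\mathcal{L} \subseteq \ker C^{(1,d-a)}_W$, and $\PP\mathcal{L}$ disjoint from the Segre--Veronese---comparing dimensions inside $\PP \ker C^{(1,d-a)}_W$ exactly as in the proof of Theorem~\ref{thm: single homogeneous polynomial} yields the desired inequality.
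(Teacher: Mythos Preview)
Your proof is correct and follows essentially the same route as the paper: your linear functionals $e_i$ cut out exactly the same subspace $\mathcal{L}$ that the paper defines via the unique expression $\sum \ell_i^d \otimes \Theta_i$, and the Segre--Veronese/kernel identification and dimension count are identical. The only cosmetic difference is in the disjointness step, where from $c_i(F)\,h(\ell_i)=0$ you deduce $F \in S^d(\ker h)$ and invoke conciseness of every nonzero $F \in W$, while the paper phrases the same contradiction as $F \otimes h \in \ker C^{(1,1)}_W$; these are equivalent since $hF = 0$ is exactly the failure of conciseness of $F$.
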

\begin{proof}
Suppose $W$ is contained in the span of $\ell_1^d,\dotsc,\ell_r^d$
and these are linearly independent.
Every element of $W \otimes S^{d-a} V^*$ can be written in the form
\[
  \sum_{i=1}^r \ell_i^d \otimes \Theta_i,
\]
the $\Theta_i$ being uniquely determined by the linear independence of the $\ell_i^d$.
Let
\[
  \cL = \left\{ \sum_{i=1}^r \ell_i^d \otimes \Theta_i \in W \otimes S^{d-a} V^*
    \: \middle| \: \Theta_1([\ell_1]) = \dotsb = \Theta_r([\ell_r]) = 0 \right\} .
\]
Then $\cL \subseteq \ker C^{(1,d-a)}_W$ clearly.
Since $\Theta_i([\ell_i]) = 0$ imposes just one condition on the polynomial $\Theta_i$,
we have
\[
  r \geq \codim \cL \geq \codim \ker C^{(1,d-a)}_W = \rank C^{(1,d-a)}_W.
\]
Next we claim that $\cL$ does not contain any nonzero element of the form $F \otimes h^{d-a}$, $F \in W$, $h \in V^*$.
For if $F \otimes h^{d-a} \in \cL$, say $F = \sum c_i \ell_i^d$, then
\[
  F \otimes h^{d-a} = \sum \ell_i^d \otimes c_i h^{d-a} \in \cL,
\]
whence $c_1 h([\ell_1]) = \dotsb = c_r h([\ell_r]) = 0$.
If each $c_i \neq 0$ this implies the $[\ell_i]$ lie on the hyperplane defined by $h$, and $W$ is not concise.
In general (allowing that some $c_i$ may be zero) all we can say is that
\[
  C^{(1,1)}_W(F \otimes h) = \sum c_i h(\ell_i^d) = 0,
\]
which means $C^{(1,1)}_W$ is not injective and $C^{(0,d-1)}_W$ is not surjective, contradicting the hypothesis.

Finally we have $F \otimes h^{d-a} \in \ker C^{(1,d-a)}_W$ if and only if $h^{d-a}(F) = 0$.
Proposition~\ref{prop: catalecticant kernel veronese intersection}
implies that this happens if and only if $[h] \in \Sigma_a(F)$,
equivalently if and only if $([F],[h]) \in \Sigma_{(0,a)}(W)$.
The points $F \otimes h^{d-a}$ correspond precisely to points in the Segre embedding of
$\PP W \times \nu_{d-a}(\PP V^*)$ in $\PP(W \otimes S^{d-a} V^*)$.
We have shown that the intersection of this Segre variety with the kernel of the catalecticant satisfies
\[
  \PP \ker C^{(1,d-a)}_W \cap \Seg(\PP W \times \nu_{d-a}(\PP V^*)) \cong \Sigma_{(0,a)}(W).
\]

Putting this all together we have
\[
\begin{split}
  r &\geq \codim \cL \\
    &> \rank C^{(1,d-a)}_W + \dim \Big\{ \PP \ker C^{(1,d-a)}_W \cap \Seg(\PP W \times \nu_{d-a}(\PP V^*)) \Big\} \\
    &= \rank C^{(1,d-a)}_W + \dim \Sigma_{(0,a)}(W),
\end{split}
\]
as claimed.
\end{proof}

\begin{example}
Here is an example in which Theorem~\ref{thm: linear series improvement second version}
gives a better bound than Theorem~\ref{thm: linear series improvement first version}.

Let $n=2$ and let $W$ be the pencil spanned by $x^6 y^3$ and $x^4 y^5$.
We have $\r(W) \leq \r(x^6 y^3) + \r(x^4 y^5) = 7 + 5 = 12$.
Better, $\r(W) \leq \r(x^6 y^3 + x^4 y^5) + \r(x^6 y^3 - x^4 y^5) = 5 + 5 = 10$.

For lower bounds, we have
\[
\begin{array}{l lllll lllll}
  \toprule
  a                   & 0 & 1 & 2 & 3 & 4 & 5 & 6 & 7 & 8 & 9 \\
  \midrule
  \rank C^{(1,9-a)}_W & 1 & 2 & 3 & 4 & 5 & 6 & 6 & 5 & 4 & 2 \\
  \bottomrule
\end{array}
\]
In particular $C^{(1,1)}_W$ is injective.
Other than the spanning elements, elements of $W$ are of the form
\[
  b x^6 y^3 + c x^4 y^5 = x^4 y^3 (b x^2 + c y^2)
\]
which has a quadruple root at $x=0$, a triple root at $y=0$, and two simple roots when $bc \neq 0$.
Therefore
\[
  \Sigma_{(1,a)}(W) =
  \begin{cases}
    V(x) \cup V(y), & 0 \leq a \leq 2 \\
    V(x), & a = 3 \\
    \varnothing, & 4 \leq a
  \end{cases}
\]
and
\[
  \dim \Sigma_{(0,a)}(W) =
  \begin{cases}
    1, & 0 \leq a \leq 3 \\
    0, & 4 \leq a \leq 5 \\
    -1, & 6 \leq a
  \end{cases}
\]
We have
\[
\begin{array}{l lllll lllll}
  \toprule
  a                                & 0 & 1 & 2 & 3 & 4 & 5 & 6 & 7 & 8 & 9 \\
  \midrule
  \rank C^{(0,9-a)}_W              & 2 & 4 & 5 & 6 & 6 & 5 & 4 & 3 & 2 & 1 \\
  \dim \widehat{\Sigma}_{(1,a)}(W) & 1 & 1 & 1 & 0 & 0 & 0 & 0 & 0 & 0 & 0 \\
  \bottomrule
\end{array}
\]
so the lower bound given by Theorem~\ref{thm: linear series improvement first version} is $\r(W) \geq 6$.
And we have
\[
\begin{array}{l lllll lllll}
  \toprule
  a                      & 0 & 1 & 2 & 3 & 4 & 5 & 6 & 7 & 8 & 9 \\
  \midrule
  \rank C^{(1,9-a)}_W    & 1 & 2 & 3 & 4 & 5 & 6 & 6 & 5 & 4 & 2 \\
  \dim \Sigma_{(0,a)}(W) & 1 & 1 & 1 & 1 & 0 & 0 & -1 & -1 & -1 & -1 \\
  \bottomrule
\end{array}
\]
so the lower bound given by Theorem~\ref{thm: linear series improvement second version} is $\r(W) \geq 7$.
\end{example}

\subsection{Multihomogeneous polynomials}

$M \in S^{\bd} V$ is \defining{concise} if $M \in S^{\bd} V' = S^{d_1} V'_1 \otimes \dotsb \otimes S^{d_s} V'_s$
for $V'_1 \subseteq V_1, \dotsc, V'_s \subseteq V_s$
implies each $V'_i = V_i$.
This is equivalent to $C^{\bd-\be}_M$ being surjective,
or $C^{\be}_M$ injective,
for each $\be=(1,0,\dotsc,0), \dotsc, (0,\dotsc,0,1)$.

However, we will need a strengthening of conciseness,
involving $\be$ with possibly more than one nonzero entry.

\begin{notation}
For a tuple $\ba = (a_1,\dotsc,a_s)$,
we define $\rad{\ba} = (e_1, \dotsc, e_s)$
where $e_i = 1$ if $a_i > 0$, otherwise $e_i = 0$.
\end{notation}

Say that $M$ vanishes to multiorder $\bb = (b_1,\dotsc,b_s)$
at a point $P \in \prod \PP V_i^*$ if, for every differential operator $D$ of multidegree $\bb$,
$D(M)$ vanishes at $P$.
Equivalently, choosing local coordinates centered at $P = (p_1,\dotsc,p_s)$ by choosing local coordinates in each $\PP V_i$
centered at $p_i$ and writing $M$ as a (non-homogeneous) polynomial in these coordinates,
no monomial appearing in $M$ has multidegree less than or equal to $\bb$.

We define $\Sigma_{\ba} = \Sigma_{\ba}(M) \subset \prod \PP V_i^*$
to be the subvariety defined by the image of $C^{\ba}_M$, regarded as a set of multihomogeneous polynomials.
Equivalently, $\Sigma_{\ba}(M)$ is the locus of points at which the multiorder of vanishing of $M$
is not less than or equal to $\ba$.

The bound for ranks of multihomogeneous polynomials is the following:
\begin{thm} \label{thm: multihomogeneous polynomial improvement}
Let $M$ be a multihomogeneous polynomial as above.
If $C^{\rad{\bd-\ba}}_M$ is injective 
then
$\r_{MH}(M) \geq \rank(C^{\bd-\ba}_M) + \dim \Sigma_{\ba}(M) + 1$,
where $\dim \varnothing = -1$.
\end{thm}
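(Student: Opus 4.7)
The plan is to follow the same pattern used for Theorem~\ref{thm: single homogeneous polynomial} and Theorem~\ref{thm: linear series improvement first version}, with the Segre--Veronese image $X_{\bd-\ba} \subset \PP S^{\bd-\ba} V^*$ of the map $([h_1],\dotsc,[h_s]) \mapsto [h_1^{d_1-a_1} \otimes \dotsb \otimes h_s^{d_s-a_s}]$ playing the role of the classical Veronese. Suppose $M$ admits a multihomogeneous decomposition $M = \sum_{i=1}^r \ell_{1,i}^{d_1} \dotsm \ell_{s,i}^{d_s}$, and let $P_i = ([\ell_{1,i}],\dotsc,[\ell_{s,i}]) \in \prod_j \PP V_j^*$. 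I will test the catalecticant $C^{\bd-\ba}_M$ against the linear series $\mathcal{L} \subseteq S^{\bd-\ba} V^*$ of multihomogeneous forms of multidegree $\bd-\ba$ that vanish at every $P_i$; since $\mathcal{L}$ is defined by $r$ linear conditions, $\codim \mathcal{L} \leq r$.

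Two key identities drive the argument. First, the multihomogeneous differentiation formula
\[
  D_p\bigl(\ell_1^{d_1} \dotsm \ell_s^{d_s}\bigr) = \Bigl(\prod_j \tfrac{d_j!}{a_j!}\Bigr) \, p([\ell_1],\dotsc,[\ell_s]) \cdot \ell_1^{a_1} \dotsm \ell_s^{a_s}
\]
for $p \in S^{\bd-\ba} V^*$ shows $\mathcal{L} \subseteq \ker C^{\bd-\ba}_M$, which already reproves the catalecticant bound $r \geq \rank C^{\bd-\ba}_M$. Second, a tautological argument in the spirit of Proposition~\ref{prop: catalecticant kernel veronese intersection} shows that $h_1^{d_1-a_1} \dotsm h_s^{d_s-a_s} \in \ker C^{\bd-\ba}_M$ if and only if $([h_1],\dotsc,[h_s]) \in \Sigma_{\ba}(M)$: test against $\Theta \in S^{\ba} V^*$, commute differential operators, and reduce to evaluation of $\Theta M$ at $([h_1],\dotsc,[h_s])$. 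This identifies $\PP\ker C^{\bd-\ba}_M \cap X_{\bd-\ba}$ with $\Sigma_{\ba}(M)$.

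The improvement comes from showing $\PP \mathcal{L}$ is disjoint from $X_{\bd-\ba}$, and this is where the injectivity hypothesis on $C^{\rad{\bd-\ba}}_M$ enters. Set $\be = \rad{\bd-\ba}$ and $h^{\be} = \prod_{j:\, e_j=1} h_j \in S^{\be} V^*$. The key observation is that $h^{\bd-\ba}$ vanishes at each $P_i$ if and only if $h^{\be}$ does, because the exponents $d_j - a_j$ are strictly positive exactly when $e_j = 1$. Applying the differentiation formula once more with $\be$ in place of $\bd-\ba$ then gives $D_{h^{\be}}(M) = 0$, and the assumed injectivity of $C^{\be}_M$ forces $h^{\be} = 0$; consequently some $h_j$ with $e_j = 1$ vanishes, and so $h^{\bd-\ba} = 0$. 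The standard projective dimension count --- two disjoint subvarieties of $\PP \ker C^{\bd-\ba}_M$ satisfy $\dim + \dim < \dim$ of the ambient --- yields
\[
  \codim \mathcal{L} > \rank C^{\bd-\ba}_M + \dim \Sigma_{\ba}(M),
\]
and combining with $r \geq \codim \mathcal{L}$ gives the desired inequality.

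The main subtlety I anticipate is bookkeeping when $\bd-\ba$ has some zero entries. Then $X_{\bd-\ba}$ is the image of a Segre--Veronese that ignores the factors where $d_j = a_j$, so care is needed to read the identification $\PP\ker C^{\bd-\ba}_M \cap X_{\bd-\ba} \cong \Sigma_{\ba}(M)$ and the dimension inequality with the consistent convention --- either viewing $\Sigma_{\ba}(M)$ inside $\prod_{j:\, d_j > a_j} \PP V_j^*$, or carrying the trivial product factors along symmetrically on both sides.
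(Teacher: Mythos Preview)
Your proposal is correct and follows essentially the same route as the paper's proof: the linear series $\mathcal{L}$ of multihomogeneous $(\bd-\ba)$-forms vanishing at the $P_i$, the inclusion $\mathcal{L}\subseteq\ker C^{\bd-\ba}_M$ via the differentiation identity, the identification of $\PP\ker C^{\bd-\ba}_M\cap X_{\bd-\ba}$ with $\Sigma_{\ba}(M)$ by commuting operators, and the disjointness of $\PP\mathcal{L}$ from $X_{\bd-\ba}$ via the reduction from $h^{\bd-\ba}$ to $h^{\be}$ with $\be=\rad{\bd-\ba}$ and the injectivity of $C^{\be}_M$. Your flagged subtlety about factors with $d_j-a_j=0$ is real and is handled no more explicitly in the paper than in your sketch; the cleanest convention is indeed to read $\Sigma_{\ba}(M)$ inside $\prod_{j:\,d_j>a_j}\PP V_j^*$, so that the Segre--Veronese identification is an isomorphism.
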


Conciseness would not be enough; it only gives injectivity of
$C^{\rad{\bd-\ba}}_M$ when $\bd-\ba$ has a single nonzero entry.

\begin{proof}
Suppose
\[
    M = \sum_{j=1}^{r}  \ell_{j,1}^{d_1} \dotsb \ell_{j,s}^{d_s} .
\]
For each $j$, let $P_j = ([\ell_{j,1}], \dotsc, [\ell_{j,s}]) \in \prod \PP V_i$.
Let $\cL = \{ p \in S^{\bd - \ba} V^* \mid p(P_1) = \dotsb = p(P_r) = 0 \}$.
Denote by $D_p$ the differential operator associated to $p$.
Then
\[
  D_p( \ell_{j,1}^{d_1} \dotsm \ell_{j,s}^{d_s} )
    = \frac{\bd !}{\ba !} \, p(\ell_{j,1},\dotsc,\ell_{j,s}) \, \ell_{j,1}^{a_1} \dotsm \ell_{j,s}^{a_s} .
\]
Thus for $p \in \cL$, $D_p(M) = 0$.
This shows $\cL \subseteq \ker C^{\bd-\ba}_M$,
hence
\[
  r \geq \codim \cL \geq \codim \ker C^{\bd-\ba}_M = \rank C^{\bd-\ba}_M .
\]

Now we use the additional hypothesis that $C^{\rad{\bd-\ba}}_M$ is injective.
With this assumption, we claim $\PP \cL$ is disjoint from
$\PP \ker C^{\bd-\ba}_M \cap \Seg(\prod_{i=1}^{s} \nu_{d_i - a_i}(\PP V_i^*))$,
and that this intersection is isomorphic to $\Sigma_\ba(M)$.
\begin{remark}
The recent paper \cite{Abo:2011lr}
shows the defectivity of certain secant varieties
by considering the intersection of a Segre-Veronese variety with the image (rather than kernel)
of a catalecticant map.
\end{remark}
If the disjointness fails, then $\cL$ contains an element $p = h_1^{d_1-a_1} \dotsm h_k^{d_k-a_k}$
for some $h_i \in V_i^*$, $h_i \neq 0$, $1 \leq i \leq k$.
For each $1 \leq j \leq r$,
\[
  p(P_j) = p(\ell_{j,1},\dotsc,\ell_{j,k}) = h_1(\ell_{j,1})^{d_1-a_1} \dotsm h_k(\ell_{j,k})^{d_k-a_k} = 0.
\]
We must have $h_i(\ell_{j,i})=0$ for some $i$ such that $d_i - a_i > 0$.
Let $\be = (e_1,\dotsc,e_k) = \rad{\bd - \ba}$.
Then $h_i(\ell_{j,i})^{e_i}=0$, so also $h_1(\ell_{j,1})^{e_1} \dotsm h_k(\ell_{j,k})^{e_k} = 0$.
This holds for all $j$, so $h_1^{e_1} \dotsm h_k^{e_k} \in \ker C^{\be}_M$,
contradicting that $C^{\be}_M$ is injective.
Hence if $C^{\rad{\bd-\ba}}_M$ is injective
then $\cL$ is disjoint from the intersection, as claimed.

Next, we claim an element $h_1^{d_1-a_1} \dotsm h_k^{d_k-a_k}$ lies in $\ker C^{\bd - \ba}_M$
if and only if $([h_1],\dotsc,[h_k]) \in \Sigma_{\ba}(M)$.
We have $h_1^{d_1-a_1} \dotsm h_k^{d_k-a_k} \in \ker C^{\bd-\ba}_M$
if and only if $(h_1^{d_1-a_1} \dotsm h_k^{d_k-a_k})(M) = 0$,
if and only if $D(h_1^{d_1-a_1} \dotsm h_k^{d_k-a_k})(M) = 0$
for all type $\ba$ differential operators $D \in S^{\ba}(V^*)$,
if and only if $(h_1^{d_1-a_1} \dotsm h_k^{d_k-a_k})(D(M)) = 0$ for all such $D$.
Since $D(M)$ is a multihomogeneous polynomial of multidegree $\bd-\ba$,
we have that this latter is equal to the evaluation at the point $([h_1],\dotsc,[h_k]) \in \prod \PP V_i^*$,
up to a scalar:
\[
  (h_1^{d_1-a_1} \dotsm h_k^{d_k-a_k})(D(M))
  =
  (\bd - \ba) !  \cdot  D(M) \big|_{ (h_1,\dotsc,h_k) } .
\]
That this vanishes for all $D$ is exactly the condition that $([h_1], \dotsc, [h_k]) \in \Sigma_{\ba}(M)$,
as claimed.
\end{proof}

\begin{example}\label{example: product of variables bihomogeneous singularity improvement}
Let $F = x_1 \dotsm x_a y_1 \dotsm y_b$, a bihomogeneous form of bidegree $(a,b)$.
One may check easily that $C^{(1,0)}_F$, $C^{(0,1)}_F$, and $C^{(1,1)}_F$ are injective.
For $\ba = (p,q)$ with $0 \leq p \leq a$, $0 \leq q \leq b$,
the image of $C^{\ba}_F$ is spanned by subproducts of $a-p$ of the $x$'s and $b-q$ of the $y$'s.
The common vanishing locus $\Sigma_{\ba}(F)$ is points with at least $p+1$ of the $x$'s vanishing
or at least $q+1$ of the $y$'s vanishing, equivalently,
at most $a-p-1$ of the $x$'s nonvanishing or at most $b-q-1$ of the $y$'s nonvanishing.
This is a finite union of products $\PP^{a-p-2} \times \PP^{b-1}$ and $\PP^{a-1} \times \PP^{b-q-2}$,
with dimension $\max\{a+b-p-3,a+b-q-3\} = a+b-3-\min\{p,q\}$.
This shows that $\r_{MH}(x_1 \dotsm x_a y_1 \dotsm y_b) \geq \binom{a}{p}\binom{b}{q} + a+b-2-\min\{p,q\}$.
For instance, $\r_{MH}(x_1 x_2 x_3 y_1 y_2) \geq \binom{3}{1}\binom{2}{1} + 3-1 = 8$.
Since $\r_{MH}(x_1 x_2 x_3 y_1 y_2) \leq 2^{3+2-2} = 8$, this determines the multihomogeneous rank.
\end{example}

\begin{example}\label{example: determinant bihomogeneous singularity improvement}
The generic determinant $\det_n$ is bihomogeneous of bidegree $(a,n-a)$
in the variables from the first $a$ and last $n-a$ rows.
But $C^{(1,1)}_{\det_n}$ is not injective:
the kernel is spanned by elements of the following two types.
\begin{enumerate}
\item $\d_{i_1,j} \d_{i_2,j}$, a product of two differentials in the same column, with $i_1 \leq a < i_2$.
\item $\d_{i,j} \d_{k,\ell} + \d_{i,\ell} \d_{k,j}$, the permanent of a $2 \times 2$ submatrix with $i \leq a < k$.
\end{enumerate}
(It is easy to see that these elements are in the kernel, and the row conditions ensure that they have bidegree $(1,1)$.
Shafiei's theorem \cite{Shafiei:ud} describes $\det_n^\perp$ and implies that the above elements span
the kernel of $C^{(1,1)}_{\det_n}$.)
So Theorem~\ref{thm: multihomogeneous polynomial improvement} is not applicable to $\r_{MH}(\det_n)$.
\end{example}

%
%

\subsection{Generalized ranks}

Finally, we give an improved lower bound for generalized ranks.

\begin{thm}\label{thm: generalized rank improvement}
Let $X$ be a smooth irreducible variety.
Let $L$ be a very ample line bundle on $X$ and let $V = H^0(X,L)^*$ (so $X \hookrightarrow \PP V$).
Let $v \in V$.
Let $G$ be a vector bundle on $X$, $b > 0$, and $E = G^b$;
more generally assume $E$ is a vector bundle on $X$
and there is a bundle map $G^b \to E$ whose kernel has no global sections.
Let $e = \rank E$.
Assume $C^G_v$ is injective.
Then $e \, r_X(v) > \rank C^E_v + \dim \Sigma$,
where $\Sigma \subset \PP H^0(X,G) \overset{\nu_b}{\hookrightarrow} \PP H^0(X,E)$
is the subvariety of $\PP H^0(X,G)$ defined by the image of the transpose
$(C^E_v)^t = C^{L \otimes E^*}_v : H^0(X,L \otimes E^*) \to H^0(X,E)^*$
(and $\dim \Sigma = -1$ if $\Sigma = \varnothing$).
\end{thm}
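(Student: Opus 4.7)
The plan is to adapt the proof of Theorem~\ref{thm: single homogeneous polynomial}, replacing the Veronese and polynomial catalecticants with the bundle-theoretic catalecticants of Landsberg--Ottaviani. First, I fix an optimal decomposition $v = v_1 + \dotsb + v_r$ with $r = r_X(v)$ and set $x_i = [v_i] \in X$. After trivializing $L_{x_i} \cong \Bbbk$, each $v_i$ is a nonzero scalar multiple of evaluation at $x_i$, so for $f \in H^0(L)$ we may write $v(f) = \sum c_i f(x_i)$ with $c_i \in \Bbbk^\times$.

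Let $\mathcal{L} \subseteq H^0(X,E)$ denote the subspace of sections vanishing at each $x_i$. Vanishing at a single point imposes at most $e = \rank E$ linear conditions, so $\codim \mathcal{L} \leq er$. If $s \in \mathcal{L}$ and $t \in H^0(L \otimes E^*)$, then $st \in H^0(L)$ vanishes at each $x_i$, whence $C^E_v(s)(t) = v(st) = \sum c_i (st)(x_i) = 0$; hence $\mathcal{L} \subseteq \ker C^E_v$. This alone already reproduces the Landsberg--Ottaviani inequality $er \geq \rank C^E_v$.

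Next, because $\ker C^E_v = (\img (C^E_v)^t)^\perp$, the defining condition for $\Sigma$ translates into the identification $\nu_b(\Sigma) = \nu_b(\PP H^0(G)) \cap \PP \ker C^E_v$ inside $\PP H^0(E)$: an element $[s] \in \PP H^0(G)$ lies in $\Sigma$ precisely when the image of $s^b$ in $H^0(E)$ lies in $\ker C^E_v$. The Veronese $\nu_b$ is injective on $\PP H^0(G)$, so $\dim \nu_b(\Sigma) = \dim \Sigma$. The main step is now to prove that $\PP \mathcal{L}$ and $\nu_b(\Sigma)$ are disjoint inside $\PP \ker C^E_v$. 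Once this is established, the inequality $\dim \PP \mathcal{L} + \dim \nu_b(\Sigma) < \dim \PP \ker C^E_v$ unwinds, via $\dim \PP \ker C^E_v = \dim H^0(E) - \rank C^E_v - 1$, into $\rank C^E_v + \dim \Sigma < \codim \mathcal{L} \leq er$, precisely the desired strict inequality (with the $\Sigma = \varnothing$ case being vacuous).

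To prove disjointness in the base case $E = G^b$, suppose for contradiction that $[s] \in \Sigma$ and that $s^b \in \mathcal{L}$. Then $s(x_i)^b = 0 \in G^b_{x_i}$ for every $i$; since $\Char \Bbbk = 0$, the symmetric algebra on $G_{x_i}$ has no nilpotents, so $s(x_i) = 0 \in G_{x_i}$ for each $i$. Applying the computation of the second paragraph to $C^G_v$ in place of $C^E_v$ yields $s \in \ker C^G_v$, and the injectivity hypothesis on $C^G_v$ then forces $s = 0$, contradicting $[s] \in \PP H^0(G)$. For the general case with a bundle map $\varphi\colon G^b \to E$ having $H^0(\ker \varphi) = 0$, the same disjointness must be extracted, using the global hypothesis (which gives $H^0(G^b) \hookrightarrow H^0(E)$) to transfer the obstruction back from $E$ to $G^b$ and hence to $G$.

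The main obstacle is precisely this last lifting step in the general case: since $\varphi$ need not be injective on a given fiber, one cannot directly conclude $s(x_i) = 0$ from $\varphi(s^b)(x_i) = 0$, and the argument must exploit the global hypothesis $H^0(\ker \varphi) = 0$ rather than purely fiberwise data. Everything else in the proof is a faithful translation of the reduced, concise argument from the classical Waring case into the bundle framework.
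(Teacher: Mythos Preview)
Your argument is essentially identical to the paper's: define $\mathcal{L}$ as the sections of $E$ vanishing at the points of an optimal decomposition, note $\mathcal{L}\subseteq\ker C^E_v$ with $\codim\mathcal{L}\leq er$, identify $\nu_b(\Sigma)$ with $\nu_b(\PP H^0(G))\cap\PP\ker C^E_v$, and use injectivity of $C^G_v$ to show $\PP\mathcal{L}$ misses this locus. The paper's proof is in fact terser than yours; its entire disjointness step is the single line ``If $\mathcal{L}$ contains $h^b$ for any $h\in H^0(X,G)$ then $h\in\ker C^G_v$, contradicting the hypothesis,'' without separating the base case $E=G^b$ from the general case or mentioning the fiberwise nilpotence argument you spell out.

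Regarding your flagged obstacle in the general case (that $\varphi(s^b)(x_i)=0$ need not force $s(x_i)=0$ when $\varphi$ fails to be fiberwise injective): the paper does not address this point either. Its proof reads as written for the case $E=G^b$, and the ``more generally'' clause in the statement is not given a separate argument. So your concern is legitimate, but it is a concern about the theorem as stated rather than a defect of your proposal relative to the paper's own proof.
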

(Here linear equations on $\PP H^0(X,E)$ induce degree $b$ equations on the Veronese image $\nu_b(\PP H^0(X,G))$.
$\Sigma$ is the locus defined by the equations arising from $\img (C^E_v)^t$.)

\begin{proof}
Let $v = x_1 + \dotsb + x_r$, each $[x_i] \in X$.
Let $\cL = \{ h \in H^0(E) \mid h([x_1]) = \dotsb = h([x_r]) = 0 \}$.
Clearly $C^E_v(h) = 0$ for each $h \in \cL$, so $\cL \subseteq \ker C^E_v$.
Each $h([x_i])=0$ imposes $e$ conditions on the global section $h$, so $\cL$ is defined by a system of $er$ equations.
This shows $er \geq \codim \cL \geq \codim \ker C^E_v = \rank C^E_v$.

If $\cL$ contains $h^b$ for any $h \in H^0(X,G)$ then $h \in \ker C^G_v$, contradicting the hypothesis.
Thus $\PP \cL$ is disjoint from $\nu_b(\PP H^0(X,G)) \cap \PP \ker C^E_v$.
Hence
\[
  er \geq \codim \PP\cL > \codim \PP\ker C^E_v + \dim(\nu_b(\PP H^0(X,G)) \cap \PP \ker C^E_v) .
\]
Finally, for $h \in H^0(X,G)$, $h^b \in \PP \ker C^E_v$ if and only if $h^b$ is annihilated by each element of the image
of the transpose $(C^E_v)^t$.
This shows $\nu_b(\PP H^0(X,G)) \cap \PP \ker C^E_v \cong \Sigma$.
\end{proof}

\section{Apolarity Lemmas}\label{section: apolarity lemmas}

In this section we go beyond considering just rank to actually considering the terms that arise in a Waring decomposition.
These are related to certain containments of ideals, corresponding to schemes called \emph{apolar schemes}.

\subsection{Classical Waring rank}

\begin{defn}
Let $S = \Bbbk[x_1,\dotsc,x_n]$ and let $T = \Bbbk[\d_1,\dotsc,\d_n]$ be the dual ring.
Let $F \in S_d$.
Then $F^\perp = \{ \Theta \in T : \Theta F = 0 \}$ is a homogeneous ideal, called the \defining{apolar ideal}
or \defining{annihilating ideal} of $F$.

The quotient ring $A^F = T / F^\perp$ is called the \defining{apolar algebra} of $F$.
\end{defn}

Clearly $F^\perp$ contains every form of degree $d+1$ or greater.
Correspondingly, $A^F$ is an Artinian algebra.
In fact $A^F$ is an Artinian Gorenstein algebra, and every Artinian Gorenstein algebra (finitely generated, standard graded)
is isomorphic to an apolar algebra $A^F$ for some $F$.

Each graded piece $F^\perp_{d-a}$ is the kernel of a catalecticant, $F^\perp_{d-a} = \ker C^{d-a}_F$.
The catalecticants are just the graded pieces of the quotient map $T \to A^F$.
In particular the rank of $C^{d-a}_F$ is equal to the value of the Hilbert function $h_{A^F}(a) = \dim ((A^F)_a)$.
The lower bound $\r(F) \geq \max_{0 \leq a \leq d} \rank C^F_{d-a}$ of Section~\ref{section: catalecticants - classical Waring rank}
becomes $\r(F) \geq \max_{0 \leq a \leq d} h_{A^F}(a)$.

\begin{thm}[Apolarity Lemma]
Let $F \in S_d$. Let $\ell_1,\dotsc,\ell_r \in S_1$ and $I = I([\ell_1],\dotsc,[\ell_r])$.
Then there are scalars $c_1,\dotsc,c_r$ such that $F = c_1 \ell_1^d + \dotsb + c_r \ell_r^d$
if and only if $I \subset F^\perp$.
\end{thm}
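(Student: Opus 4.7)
The plan is to use the apolar pairing $T_d \times S_d \to \Bbbk$, $(\Theta, G) \mapsto \Theta G$, which lands in $S_0 = \Bbbk$ and is a perfect pairing of finite-dimensional vector spaces. Throughout, I will rely on the identity already recalled in the paper,
\[
  \Theta(\ell^d) \;=\; \frac{d!}{(d-a)!}\,\ell^{d-a}\,\Theta(\ell)
  \qquad \text{for } \Theta \in T_a,\ a \le d,
\]
together with the convention that $\Theta(\ell^d) = 0$ whenever $\deg \Theta > d$. The ideal $I = I([\ell_1],\dotsc,[\ell_r]) \subset T$ is by definition the graded ideal of forms in $T = \Bbbk[\d_1,\dotsc,\d_n]$ vanishing on the $r$ points $[\ell_i] \in \PP V$.

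For the ($\Rightarrow$) direction, suppose $F = \sum_{i=1}^r c_i \ell_i^d$ and take any homogeneous $\Theta \in I$ of degree $a$. If $a > d$ then $\Theta F = 0$ trivially. If $a \le d$, then
\[
  \Theta F \;=\; \sum_{i=1}^{r} c_i\,\Theta(\ell_i^d)
         \;=\; \sum_{i=1}^{r} c_i\,\frac{d!}{(d-a)!}\,\ell_i^{d-a}\,\Theta(\ell_i) \;=\; 0,
\]
since each $\Theta(\ell_i) = 0$. Hence $\Theta \in F^\perp$, and $I \subset F^\perp$.

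The ($\Leftarrow$) direction is the substantive part, and is where I expect any difficulty to lie. I would argue as follows. By the perfectness of the apolar pairing on $S_d$, a form $F \in S_d$ lies in $\Span\{\ell_1^d,\dotsc,\ell_r^d\}$ if and only if every $\Theta \in T_d$ that annihilates each $\ell_i^d$ also annihilates $F$. Using the identity above with $a = d$, we have $\Theta(\ell_i^d) = d!\,\Theta(\ell_i)$, so the condition $\Theta \ell_i^d = 0$ for all $i$ is exactly the condition $\Theta \in I_d$. By hypothesis $I_d \subset F^\perp_d$, so every such $\Theta$ satisfies $\Theta F = 0$. Therefore $F \in \Span\{\ell_1^d,\dotsc,\ell_r^d\}$, which is the desired expression $F = \sum c_i \ell_i^d$ for some scalars $c_i$.

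The only subtle point is the reduction in the $(\Leftarrow)$ direction to the single graded piece in degree $d$: one must verify that it suffices to test $F \in \Span\{\ell_1^d,\dots,\ell_r^d\}$ against $T_d$ rather than the whole ideal $I$. This follows from the nondegeneracy of the degree-$d$ apolar pairing between $S_d$ and $T_d$ (equivalently, the fact that $F^\perp$ has no elements of degree $d$ other than those in $\ker C^d_F$, and the annihilator in $T_d$ of $\Span\{\ell_1^d,\dots,\ell_r^d\}$ is exactly $I_d$). Once this is in hand the proof takes only a few lines.
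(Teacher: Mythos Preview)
Your proof is correct and follows essentially the same approach as the paper's: both directions use the identity $\Theta(\ell^d) = \frac{d!}{(d-a)!}\ell^{d-a}\Theta(\ell)$ for $(\Rightarrow)$, and for $(\Leftarrow)$ both reduce to degree $d$ and invoke the perfect pairing between $S_d$ and $T_d$ (the paper writes this as $\Span\{F\} = (F^\perp_d)^\perp \subseteq (I_d)^\perp = \Span\{\ell_i^d\}$, which is exactly your argument in perp notation). Your explicit discussion of why the degree-$d$ piece suffices is a helpful elaboration of what the paper leaves implicit.
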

This is the ``classical'' Apolarity Lemma.
See for example \cite[Theorem 5.3]{MR1735271}, \cite[\textsection1.3]{MR1780430}.
\begin{proof}
We have seen for each graded piece $\cL = I_k$ and $\Theta \in \cL$ that $\Theta \ell_i^d = 0$ for each $i$,
so $\Theta F = 0$ and $\Theta \in \ker C^F_k$.
This shows $I \subset F^\perp$.

Conversely if $I \subset F^\perp$, in particular $I_d \subset F^\perp_d$.
Note $I_d = \bigcap_{i=1}^r I(\ell_i)_d = \bigcap_{i=1}^r (\ell_i^d)^\perp$.
Then $\Span\{F\} = (F^\perp)^\perp \subseteq \sum_{i=1}^r \Span\{\ell_i^d\} = \Span\{\ell_1^d,\dotsc,\ell_r^d\}$, as desired.
\end{proof}

Here is a version for schemes which is well-known to experts.

\begin{thm}
Let $F \in S_d$.
Let $Z \subset \PP V$ be an arbitrary scheme with saturated homogeneous defining ideal $I = I(Z)$.
Let $\nu_d : \PP V \to \PP S^d V$ be the degree $d$ Veronese map.
Then $[F]$ lies in the linear span of the scheme $\nu_d(Z)$ if and only if $I \subset F^\perp$.
\end{thm}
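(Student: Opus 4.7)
The plan is to separate this into a geometric step and an algebraic step. First I would identify the linear span of $\nu_d(Z) \subset \PP S^d V$ using the standard duality between the Veronese embedding and degree $d$ forms, reducing the statement to an equivalence between $I_d \subset F^\perp_d$ and $I \subset F^\perp$. Then I would bootstrap from that single graded piece to the whole ideal using that $I$ is an ideal and that the apolar pairing is perfect in each degree.

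For the geometric step, the linear span of a projective scheme is cut out by all hyperplanes containing it. A hyperplane in $\PP S^d V$ corresponds to a nonzero $\Theta \in T_d = S^d V^*$, and such a hyperplane contains $\nu_d(Z)$ iff the corresponding degree $d$ form vanishes on $Z$, iff $\Theta \in I_d$. This is exactly where the saturation hypothesis enters: $I_d = I(Z)_d$ must equal the full space of degree $d$ forms vanishing on $Z$, which can fail for non-saturated defining ideals. Consequently $[F]$ lies in the linear span of $\nu_d(Z)$ iff $\Theta F = 0$ for every $\Theta \in I_d$, that is, $I_d \subset F^\perp_d = \ker C^{d}_F$.

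For the algebraic step, I would show $I \subset F^\perp$ iff $I_d \subset F^\perp_d$. The forward direction is immediate. For the converse, suppose $I_d \subset F^\perp_d$ and take $\Theta \in I_k$. When $k > d$, $\Theta F = 0$ automatically since $F$ has degree $d$. When $k \leq d$, for every $\Psi \in T_{d-k}$ the product $\Psi \Theta$ lies in $I_d$, hence annihilates $F$, giving $\Psi(\Theta F) = (\Psi\Theta) F = 0$; since $\Theta F \in S_{d-k}$ is annihilated by every element of $T_{d-k}$ and the apolar pairing $T_{d-k} \times S_{d-k} \to \Bbbk$ is nondegenerate, $\Theta F = 0$, so $\Theta \in F^\perp_k$.

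The main obstacle is conceptual rather than computational: one must be willing to identify the scheme-theoretic linear span of $\nu_d(Z)$ with the vanishing locus of $I_d$ acting by the apolar pairing, and to see why saturation is precisely the hypothesis that makes this identification correct. Once that is granted, the remainder of the argument is a routine unwinding of the fact that $I$ is closed under multiplication by $T$ together with perfectness of the apolar pairing in each degree.
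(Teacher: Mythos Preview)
Your proposal is correct and follows essentially the same approach as the paper: first identify the linear span of $\nu_d(Z)$ via hyperplanes (i.e., elements of $I_d$ under the apolar pairing), reducing to $I_d \subset F^\perp_d$, and then pass from degree $d$ to all degrees. The only difference is that the paper cites external references (\cite[Proposition~3.4(iii)]{MR3121848} or \cite[Lemma~2.15]{MR1735271}) for the equivalence $I_d \subset F^\perp_d \iff I \subset F^\perp$, whereas you supply the standard proof of that lemma directly; your argument for it is exactly the one that appears later in the paper in the proof of Theorem~\ref{thm: apolarity lemma carlini}.
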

\begin{proof}
$[F]$ lies in the linear span of $\nu_d(Z)$ if and only if every linear form vanishing on $\nu_d(Z)$ also vanishes on $[F]$,
that is, $F$ is annihilated by the space of linear forms on $\PP S^d V$ that vanish on $\nu_d(Z)$;
these are precisely the degree $d$ forms on $\PP V$ that vanish on $Z$, i.e., the degree $d$ piece of $I$.
So $[F]$ lies in the linear span of $\nu_d(Z)$ if and only if $F \in (I_d)^\perp$, equivalently $I_d \subset (F^\perp)_d$.
For $d = \deg F$, $I_d \subset (F^\perp)_d$ if and only if $I \subset F^\perp$
by for example \cite[Proposition~3.4(iii)]{MR3121848} or \cite[Lemma~2.15]{MR1735271}.
\end{proof}

A scheme $Z \subset \PP V$ is called \defining{apolar to $F$} if its defining ideal is contained in $F^\perp$,
equivalently $[F]$ is in the linear span of $\nu_d(Z)$.
Thus the Waring rank of $F$ is equal to the least length of a reduced zero-dimensional apolar scheme to $F$.
This leads to obvious generalizations:
The \defining{smoothable rank} $s\r(F)$ of $F$ is equal to the least length of a smoothable zero-dimensional apolar scheme to $F$.
The \defining{cactus rank} $c\r(F)$ of $F$ is equal to the least length
of any zero-dimensional apolar scheme to $F$.
Evidently $c\r(F) \leq s\r(F) \leq \r(F)$.
For many more notions of rank, see \cite{Bernardi:2012fk}.

\begin{remark}
Let $X \subset \PP^n$.
A \defining{secant $(r-1)$-plane to $X$} is an $(r-1)$-plane $\langle x_1,\dotsc,x_r \rangle$
spanned by distinct, reduced points $x_1,\dotsc,x_r$ in $X$.
The \defining{$r$th secant variety of $X$}, denoted $\sigma_r(X)$,
is the Zariski closure of the union of the secant $(r-1)$-planes:
\[
  \sigma_r(X) = \overline{ \bigcup \{ \langle R \rangle \mid R \subset X, \text{$R$ consists of at most $r$ distinct points in $X$} \} } .
\]
Equivalently it is the Zariski closure of the set of points of rank at most $r$.
For $q \neq 0$, the \defining{border rank} of $q$ with respect to $X$,
denoted $b\r_X(q)$, is the least $r$ such that $[q] \in \sigma_r(X)$.
By definition, $b\r_X(q) \leq \r_X(q)$.

For a $d$-form $F$, the border rank with respect to the Veronese variety is denoted $b\r(F)$.
It is the least $r$ such that $F$ is a limit of forms of Waring rank $r$ or less.
For example $\r(x y^{d-1}) = d$, but
\[
  x y^{d-1} = \lim_{t \to 0} \frac{1}{dt} \Big( (y+tx)^d - y^d \Big),
\]
so $x y^{d-1}$ is a limit of forms of rank $2$.
Thus $b\r(x y^{d-1}) \leq 2$.
In fact $b\r(x y^{d-1}) = 2$ by the catalecticant lower bound which we describe now.

It turns out that $b\r(F) \leq s\r(F) \leq \r(F)$.
The inequality $b\r(F) \leq s\r(F)$ holds because of the closure operation in the definition of secant variety.
Strict inequality may hold, see \cite{Buczynska:2013zh}.

The catalecticant lower bound \eqref{eq: catalecticant bound single homogeneous form}
is in fact a lower bound for border rank: $\rank C^a_F \leq b\r(F)$.
Indeed, for all forms $F$ of Waring rank $r$ or less, the catalecticant $C^a_F$ has rank at most $r$,
so all the $(r+1)$-minors of a matrix representing $C^a_F$ vanish.
Then these minors vanish also on the Zariski closure of the locus of forms of Waring rank $r$ or less,
which is the $r$th secant variety, i.e., exactly the locus of forms of border rank $r$ or less.
This shows that if $b\r(F) \leq r$ then $\rank C^a_F \leq r$.
Hence $\rank C^a_F \leq b\r(F)$.
\end{remark}

And the catalecticant lower bound is also a lower bound for cactus rank: $\rank C^a_F \leq c\r(F)$.
See \cite[Thm.~5.3D]{MR1735271}.
(Briefly: if $Z$ is a zero-dimensional apolar scheme of length $r$ then the coordinate ring $\Bbbk[Z] = T/I(Z)$
is one-dimensional with Hilbert polynomial $r$, and has non-decreasing Hilbert function,
so $\rank C^a_F = \dim T/F^\perp \leq \dim T/I(Z) \leq r$.)

\begin{remark}
The following inequalities hold:
\begin{gather*}
  \rank C^a_F \leq c\r(F) \leq s\r(F) \leq \r(F), \\
  \rank C^a_F \leq b\r(F) \leq s\r(F) \leq \r(F).
\end{gather*}
For examples with $cr(F) < br(F)$, see for instance \cite{MR2996880},
where it is shown that a general cubic form in $n+1$ variables has cactus rank at most $2n+2$,
while by the Alexander--Hirschowitz theorem it has border rank
\[
  \left\lceil \frac{1}{n+1} \binom{n+3}{3} \right\rceil .
\]
The reverse inequality is also possible.
For an example, see \cite{Buczynska:2013zh},
where for the particular (not general) cubic in five variables $F = x_0^2 y_0 - (x_0 + x_1)^2 y_1 + x_1^2 y_2$
it is shown that $br(F) = 5$, $cr(F) = 6$.
\end{remark}

The colorful name ``cactus rank'' was coined in \cite{MR2842085}, following \cite{MR3121848}.
However earlier terminology in \cite[Definition~5.1, Definition~5.66]{MR1735271} is as follows:
\defining{annihilating scheme} for apolar scheme to $F$,
\defining{scheme length} for cactus rank,
\defining{smoothable scheme length} for smoothable rank,
and \defining{length} for border rank.

In the next section we will review and generalize a lower bound for cactus rank
discovered by Ranestad and Schreyer.

\subsection{Simultaneous Waring rank}

\begin{defn}
Let $W \subseteq S_d$.
Then
\[
  W^\perp = \{ \Theta \in T : \Theta F = 0 \text{ for all $F \in W$} \}
    = \bigcap_{F \in W} F^\perp.
\]
The apolar algebra is $A^W = T / W^\perp$.
\end{defn}

Note that each graded piece $W^\perp_k$ is the kernel of the catalecticant $C_W^{(0,k)}$.
The apolar algebra $A^W$ is a level Artin algebra; these have been studied in for example \cite{MR2292384}.

Say a scheme $Z \subset \PP V$ is \defining{apolar to $W \subseteq S^d V$} if $I = I(Z) \subseteq W^\perp$.
Then $\r(W)$ is the least length of a reduced zero-dimensional apolar scheme.
As before we define the smoothable rank $s\r(W)$ and cactus rank $c\r(W)$:
$s\r(W)$ is the least length of a smoothable zero-dimensional apolar scheme
and $c\r(W)$ is the least length of a zero-dimensional apolar scheme.

Here is an apolarity lemma for linear series:
\begin{thm}
Let $W \subseteq S_d$.
Let $\ell_1,\dotsc,\ell_r \in V$ and $I = I([\ell_1],\dotsc,[\ell_r])$.
Then $W \subseteq \Span \{ \ell_1^d, \dotsc, \ell_r^d \}$ if and only if $I \subset W^\perp$.

More generally let $Z \subset \PP V$ be a scheme with saturated homogeneous defining ideal $I = I(Z)$
and let $\nu_d : \PP V \to \PP S^d V$ be the degree $d$ Veronese map.
Then $\PP W \subseteq \Span(\nu_d(Z))$ if and only if $I \subset W^\perp$.
\end{thm}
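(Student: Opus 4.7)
The plan is to reduce both assertions to the previously established single-form Apolarity Lemma and its scheme version, exploiting the tautological identity $W^\perp = \bigcap_{F \in W} F^\perp$ recorded in the definition.

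For the first statement, I would argue as follows. If $W \subseteq \Span\{\ell_1^d,\dotsc,\ell_r^d\}$, then every $F \in W$ admits an expression $F = \sum c_i \ell_i^d$, so by the classical Apolarity Lemma $I \subseteq F^\perp$ for every $F \in W$; intersecting over $F$ gives $I \subseteq W^\perp$. Conversely, if $I \subseteq W^\perp$, then for each $F \in W$ we have $I \subseteq F^\perp$, and the classical Apolarity Lemma yields scalars with $F = \sum c_i(F)\, \ell_i^d$, so $F \in \Span\{\ell_1^d,\dotsc,\ell_r^d\}$. Since this holds for every $F \in W$, we obtain $W \subseteq \Span\{\ell_1^d,\dotsc,\ell_r^d\}$.

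The scheme version follows the same pattern, but using the scheme-theoretic Apolarity Lemma stated earlier: $[F] \in \Span(\nu_d(Z))$ if and only if $I(Z) \subseteq F^\perp$. So $\PP W \subseteq \Span(\nu_d(Z))$ holds precisely when every $[F] \in \PP W$ lies in $\Span(\nu_d(Z))$, which (by the single-form result) happens iff $I \subseteq F^\perp$ for every $F \in W$, iff $I \subseteq \bigcap_{F \in W} F^\perp = W^\perp$.

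There is essentially no serious obstacle here: once one observes that ``$\PP W \subseteq \Span(\nu_d(Z))$'' is equivalent to the pointwise statement ``$[F] \in \Span(\nu_d(Z))$ for every $F \in W$'', both directions are immediate consequences of the already-proved single-form versions. The only thing worth verifying carefully is that one may invoke the scheme version in degree $d = \deg F$ uniformly for all $F \in W$ (degrees match because $W$ is homogeneous of a single degree $d$), and that the saturation hypothesis on $I(Z)$ is what makes the equivalence with the degree-$d$ condition $I_d \subseteq (W^\perp)_d$ go through, just as in the single-form proof.
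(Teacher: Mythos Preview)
Your proposal is correct and follows essentially the same approach as the paper: reduce to the single-form Apolarity Lemma (and its scheme version) applied to each $F \in W$, then use $W^\perp = \bigcap_{F \in W} F^\perp$ to pass between the pointwise and global statements. The paper's proof is just a terse chain of equivalences encoding exactly this argument.
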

\begin{proof}
$W \subseteq \Span \{ \ell_1^d, \dotsc, \ell_r^d \}$ if and only if for each $F \in W$,
there are constants $c_1,\dotsc,c_r$ such that $F = c_1 \ell_1^d + \dotsb + c_r \ell_r^d$,
if and only if for each $F \in W$, $I \subset F^\perp$ (by the usual Apolarity Lemma!),
if and only $I \subseteq \bigcap_{F \in W} F^\perp = W^\perp$.

The proof of the scheme version is the same as before.
\end{proof}

\begin{example}\label{example: matrix linear series apolar ideals}
Consider the linear series $R_k$, $D_k$, $P_k$ as in Example~\ref{example: matrix linear series},
in variables $x_{i,j}$ for $1 \leq i \leq m$, $1 \leq j \leq n$.
Let the dual variables be $\d_{i,j}$; let $X^*$ be the $m \times n$ matrix with entries $\d_{i,j}$.
It is easy to see that
\[
  R_k^\perp = \langle \d_{i,j_1}\d_{i,j_2}, \d_{i_1,j}\d_{i_2,j} \mid 1 \leq i,i_1,i_2 \leq m, 1 \leq j,j_1,j_2 \leq n \rangle,
\]
that is, the ideal generated by products of two $\d_{i,j}$ from either the same row or the same column of $X^*$.
It is also easy to see that $D_k$ is annihilated by these products together with permanents of $2 \times 2$ submatrices of $X^*$,
while $P_k$ is annihilated by the same products together with $2 \times 2$ minors of $X^*$.
By Shafiei's results \cite{Shafiei:ud}, these actually generate the apolar ideals:
\[
  D_k^\perp = R_k^\perp + P_2^*,
  \qquad
  P_k^\perp = R_k^\perp + D_2^*,
\]
where $P_2^*$ is the ideal generated by permanents of $2 \times 2$ submatrices of $X^*$,
\[
  P_2^* = \langle \d_{i,j} \d_{k,\ell} + \d_{i,\ell} \d_{k,j} \rangle,
\]
and $D_2^*$ is the ideal generated by $2 \times 2$ minors of $X^*$,
\[
  D_2^* = \langle \d_{i,j} \d_{k,\ell} - \d_{i,\ell} \d_{k,j} \rangle.
\]
\end{example}

\subsection{Multihomogeneous polynomial}

\begin{defn}
Let $S = \Bbbk[V_1 \oplus \dotsb \oplus V_s]$
and $T = \Bbbk[V_1^* \oplus \dotsb \oplus V_s^*]$, multigraded rings.
For $M \in S_{\bd} = S^{d_1} V_1 \otimes \dotsm \otimes S^{d_s} V_s$,
\[
  M^\perp = \{ \Theta \in T : \Theta M = 0 \} .
\]
The apolar algebra is $A^M = T / M^\perp$.
\end{defn}
The apolar algebra $A^M$ is a multigraded Artin algebra.
Each multigraded piece $(M^\perp)_{\bk}$ is the kernel of the catalecticant $C^{\bk}_M$.

A scheme $Z \subset \PP V = \prod \PP V_i$ is \defining{apolar to $M$} if $I = I(Z) \subseteq M^\perp$.
The smoothable rank and cactus rank are defined as before.

We will state an apolarity lemma for multihomogeneous polynomials
using the following notation.
For a point $P = (\ell_1,\dotsc,\ell_s) \in V_1 \times \dotsm \times V_s$, with each $\ell_j \neq 0$,
we denote $[P] = ([\ell_1],\dotsc,[\ell_s]) \in \PP V_1 \times \dotsm \times \PP V_s$.
For each $\bd = (d_1,\dotsc,d_s)$ let $\nu_{\bd} : \prod \PP V_i \to \prod \PP S^{d_i} V_i$ be the Segre-Veronese map,
$\nu_{\bd}([\ell_1],\dotsc,[\ell_s]) = [\ell_1^{d_1} \dotsm \ell_s^{d_s}]$.
For convenience let us denote $P^{\bd} = \ell_1^{d_1} \dotsm \ell_s^{d_s}$.
Note that the coordinate ring of $\PP V = \PP V_1 \times \dotsm \times \PP V_s$ is $\Bbbk[\PP V] = \Bbbk[V^*] = T$.

\begin{thm}
Let $M$ be a multihomogeneous polynomial of multidegree $\bd = (d_1,\dotsc,d_s)$.
For $i = 1,\dotsc,r$, let $P_i = (\ell_{i,1},\dotsc,\ell_{i,s})$ where each $\ell_{i,j} \in V_j$, $\ell_{i,j} \neq 0$.
Let $I = I([P_1],\dotsc,[P_r]) \subset T$.
Then $M \in \Span(P_1^{\bd},\dotsc,P_r^{\bd})$ if and only if $I \subset M^\perp$.

More generally let $Z \subset \PP V = \prod \PP V_i$ be a scheme with defining ideal $I = I(Z)$.
Then $[M]$ is in the linear span of $\nu_{\bd}(Z)$ if and only if $I \subset M^\perp$.
\end{thm}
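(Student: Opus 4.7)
The plan is to follow the structure of the two preceding apolarity lemmas, deducing the points version from the scheme version by taking $Z$ to be the reduced scheme $\{[P_1],\dotsc,[P_r]\}$. The central technical ingredient is the multi-graded analog of the formula $D_p(\ell^d) = \frac{d!}{(d-k)!}\, p(\ell)\, \ell^{d-k}$ recalled in Section~\ref{section: catalecticants - classical Waring rank}: for a pure tensor $\Theta = \Theta_1 \otimes \dotsb \otimes \Theta_s \in S^{\bk} V^*$ with $\bk \leq \bd$ and a product $P^{\bd} = \ell_1^{d_1} \dotsm \ell_s^{d_s}$, one has $\Theta(P^{\bd}) = c_{\bd,\bk}\, \Theta([P])\, P^{\bd - \bk}$ for a nonzero constant $c_{\bd,\bk} = \prod_j \frac{d_j!}{(d_j - k_j)!}$; this follows by applying the single-factor identity slot by slot, and extends to all $\Theta \in S^{\bk} V^*$ by linearity. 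In particular, at top multi-degree $\bk = \bd$, $\Theta(P^{\bd})$ is, up to scalar, just the evaluation $\Theta([P])$.

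For the scheme version, the first step is to identify the linear span of $\nu_{\bd}(Z) \subset \PP(S^{\bd} V)$ with the projective subspace cut out by $I(Z)_{\bd}$. The Segre--Veronese map $\nu_{\bd}$ is the embedding by the complete linear system $|\mathcal{O}_{\prod \PP V_i}(\bd)|$, so pullback of sections gives a canonical identification $(S^{\bd} V)^* \cong T_{\bd}$ under which a linear form vanishes on $\nu_{\bd}(Z)$ iff its pullback vanishes on $Z$, iff it lies in $I(Z)_{\bd}$. Hence $[M]$ lies in the linear span of $\nu_{\bd}(Z)$ iff $I(Z)_{\bd} \subseteq (M^\perp)_{\bd}$. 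It then remains to prove the multi-graded version of \cite[Lemma~2.15]{MR1735271}: for a multi-homogeneous ideal $J \subset T$, $J \subseteq M^\perp$ iff $J_{\bd} \subseteq (M^\perp)_{\bd}$. The nontrivial direction takes $\Theta \in J_{\bk}$: if some $k_j > d_j$ then $\Theta M = 0$ automatically by degree, while if $\bk \leq \bd$ then $\Theta \cdot T_{\bd - \bk} \subseteq J_{\bd} \subseteq (M^\perp)_{\bd}$ gives $\Xi(\Theta M) = 0$ for every $\Xi \in T_{\bd - \bk}$, and the perfect apolarity pairing between $S^{\bd - \bk} V$ and $S^{\bd - \bk} V^*$ then forces $\Theta M = 0$.

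The points version follows from the scheme statement applied to $Z = \{[P_1],\dotsc,[P_r]\}$, using the top-degree identity in the first paragraph to match $I_{\bd}$, under apolarity duality, with the annihilator of $\Span\{P_1^{\bd},\dotsc,P_r^{\bd}\}$ in $S^{\bd} V^*$. The main obstacle is less any conceptual difficulty than the careful multi-graded bookkeeping, particularly in the dichotomy ``$\bk \leq \bd$ or some $k_j > d_j$'' which must cover every multi-degree; this works precisely because the negation of the former is the latter, so the case analysis in the previous paragraph is exhaustive.
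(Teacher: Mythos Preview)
Your proposal is correct and follows essentially the same route as the paper: both reduce the question to the containment $I_{\bd} \subseteq (M^\perp)_{\bd}$ via the identification of linear forms on $\PP S^{\bd} V$ vanishing on $\nu_{\bd}(Z)$ with $I(Z)_{\bd}$, and then upgrade this to $I \subseteq M^\perp$ by the multigraded analogue of \cite[Lemma~2.15]{MR1735271}, using the perfect apolarity pairing and the observation that any $\Theta$ of multidegree $\bk$ with some $k_j > d_j$ annihilates $M$ for degree reasons. The only cosmetic difference is that the paper proves the points statement directly (using the identity $\Theta(P_i^{\bd}) = c\,\Theta([P_i])\,P_i^{\bd-\bk}$ for each graded piece of $I$) and then remarks that the scheme version goes the same way, whereas you prove the scheme version first and specialize to reduced $Z$; the underlying argument is the same.
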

\begin{proof}
If $\Theta \in I_{\bk}$ then $\Theta([P_1]) = \dotsb = \Theta([P_r]) = 0$ so, as a differential operator,
$\Theta(P_1^{\bd}) = \dotsb = \Theta(P_r^{\bd}) = 0$, hence $\Theta(M) = 0$ and $\Theta \in M^\perp$.
Conversely, if $I \subset M^\perp$ then in particular $I_{\bd} \subseteq M^\perp_{\bd}$.
For $\Theta \in T_{\bd}$, $\Theta \in I_{\bd}$ if and only if as a polynomial $\Theta(P_i) = 0$ for each $i$,
if and only if as a differential operator $\Theta(P_i^{\bd}) = 0$ for each $i$,
if and only if, in the pairing between the dual spaces $S_{\bd}$ and $T_{\bd}$, $\Theta \in (P_i^{\bd})^\perp$ for each $i$.
So $\bigcap_{i=1}^r (P_i^{\bd})^\perp = I_{\bd} \subseteq M^\perp_{\bd}$.
Transposing yields $\Span\{M\} \subseteq (I_{\bd})^\perp = (\bigcap (P_i^{\bd})^\perp)^\perp = \Span\{P_1^{\bd},\dotsc,P_s^{\bd}\}$.

The proof of the scheme version is the same as before.
\end{proof}

\begin{remark}\label{remark: maciej galazka}
Work in progress by Maciej Ga{\l}{\c a}zka
shows that similar statements hold on more general toric varieties.
\end{remark}

\subsection{Generalized rank}

I do not know a statement in the full generality of Section \ref{section: generalized rank}.
Here is a statement essentially due to Carlini.

\begin{thm}[{\cite{MR2202247}, \cite{MR2184818}}]\label{thm: apolarity lemma carlini}
Let $F \in \Bbbk[V]$ be a homogeneous form of degree $d$ in $n = \dim V$ variables.
Let $W_1,\dotsc,W_s \subset V$ be linear subspaces and let $I = I(W_1 \cup \dotsb \cup W_s)$
be the homogeneous defining ideal of the reduced union of the $W_i$.
Then the following are equivalent:
\begin{enumerate}
\item There exist $G_i \in \Bbbk[W_i]$, $i=1,\dotsc,s$, such that $F \in \Span\{G_1,\dotsc,G_s\}$.
\item $I \subset F^\perp$.
\end{enumerate}
\end{thm}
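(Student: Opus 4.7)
The plan is to derive this directly from the scheme version of the classical Apolarity Lemma already proved above in this section, applied to the reduced union $Z = W_1 \cup \dotsb \cup W_s$ viewed as a closed subscheme of $\PP V$.

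First, I would make a trivial reduction: since $F$ is homogeneous of degree $d$, condition (1) is equivalent to the existence of homogeneous forms $G_i \in S^d W_i$ (degree $d$ piece of $\Bbbk[W_i]$) with $F = G_1 + \dotsb + G_s$, because we may replace each $G_i$ appearing in a representation of $F$ by its degree $d$ component. Thus (1) becomes
\[
  F \in S^d W_1 + \dotsb + S^d W_s \subset S^d V .
\]

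Next, I would apply the scheme apolarity lemma to $Z$. The saturated homogeneous ideal $I(Z) = \bigcap_i I(W_i) \subset T$ is the one given in the hypothesis, and the scheme apolarity theorem asserts that $I(Z) \subset F^\perp$ if and only if $[F]$ lies in the linear span of $\nu_d(Z) \subset \PP S^d V$. It remains only to identify this linear span with $\sum_i S^d W_i$. Since $Z$ is the reduced union of the $\PP W_i$ and $\nu_d$ is a closed embedding,
\[
  \nu_d(Z) = \bigcup_{i=1}^{s} \nu_d(\PP W_i) ,
  \qquad
  \Span \nu_d(Z) = \sum_{i=1}^{s} \Span \nu_d(\PP W_i) .
\]
For each $i$, $\nu_d(\PP W_i)$ is the image of the degree $d$ Veronese embedding of $\PP W_i$, and is nondegenerate in the subspace $\PP S^d W_i \subset \PP S^d V$ (in characteristic zero, the $d$th powers of linear forms span the full symmetric power). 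Hence $\Span \nu_d(\PP W_i) = S^d W_i$, and combining the three steps yields the desired equivalence.

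The proof is really an assembly: the substantive content is in the scheme version of the apolarity lemma, and there is no genuine obstacle. The only points to be careful about are the bookkeeping that the reduced ideal of a union of linear subspaces is saturated (automatic), and the nondegeneracy of each Veronese image inside the appropriate symmetric power (a standard characteristic zero fact). No new techniques beyond those already developed in Section~\ref{section: apolarity lemmas} are required.
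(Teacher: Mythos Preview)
Your proof is correct and takes a genuinely different route from the paper. You apply the scheme version of the Apolarity Lemma (already proved just above) directly to the positive-dimensional scheme $Z = \bigcup_i \PP W_i$, reducing the theorem to the identification $\Span \nu_d(Z) = \sum_i S^d W_i$. The paper, by contrast, reduces everything to the \emph{zero-dimensional} classical Apolarity Lemma: for $(1)\Rightarrow(2)$ it fixes a power sum decomposition of each $G_i$, obtaining a finite set of points inside the $W_i$ whose ideal $J$ satisfies $I \subset J \subset F^\perp$; for $(2)\Rightarrow(1)$ it chooses finitely many points $\ell_{i,j}$ in each $W_i$ whose $d$th powers span $\Bbbk[W_i]_d$, shows their ideal $J$ satisfies $J_d \subset I_d$, and then argues degree by degree that $J \subset F^\perp$.

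Your approach is shorter and conceptually cleaner, since the scheme Apolarity Lemma absorbs exactly the degree-by-degree bookkeeping that the paper carries out by hand. The paper's approach, on the other hand, makes the decomposition completely explicit (each $G_i$ is exhibited as a concrete sum of $d$th powers), and relies only on the point version of apolarity rather than on the scheme version. Both arguments ultimately rest on the same characteristic-zero fact that $d$th powers of linear forms span $S^d W_i$, which you invoke as nondegeneracy of the Veronese and the paper invokes when choosing the spanning points $\ell_{i,j}$.
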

In particular the least number of terms in a ``codimension one'' decomposition of $F$,
i.e., a decomposition as a sum of forms in $n-1$ variables
(see Example~\ref{example: carlini codimension one decompositions}),
is equal to the least number of hyperplanes whose union is apolar to $F$,
equivalently, the least degree of a form in $F^\perp$ that factors as a product of distinct linear factors.
Similarly, the least number of terms in a decomposition of $F$ as a sum of binary forms
(see Example~\ref{example: carlini binary decompositions})
is equal to the least number of projective lines whose union is apolar to $F$.
These are the results stated by Carlini, but his proofs actually give the full theorem above.
For the convenience of the reader we give here the proof, following Carlini's ideas.
\begin{proof}
First suppose $F \in \Span\{G_1,\dotsc,G_s\}$.
For each $i$ fix a power sum decomposition of $G_i$, with terms corresponding to points in $W_i$.
Let $J$ be the homogeneous defining ideal of all the (projective) points, for all the terms that arise for all of the $G_i$.
Since $F$ is a linear combination of the $G_i$, these terms also give a power sum decomposition of $F$.
By the classical Apolarity Lemma, $J \subset F^\perp$.
And since each of the points defined by $J$ lies in one of the $W_i$, $I \subset J$.
So $I \subset F^\perp$.

Conversely suppose $I \subset F^\perp$.
For each $i$ choose enough points $\ell_{i,1},\dotsc,\ell_{i,k_i} \in W_i$ so that the $\ell_{i,j}^d$ span
the space of $d$-forms on $\PP W_i^*$, that is, $\Bbbk[W_i]_d$
(we may take $k_i = \binom{\dim W_i + d - 1}{d}$ and choose the $\ell_{i,j}$ generally in $W_i$).
Let $J$ be the defining ideal of all the points $\ell_{i,j}$.
We claim that $J_d \subset I_d$.
If $\Theta \in T_d$ is any dual $d$-form vanishing at each point $\ell_{i,j}$ then, as a differential operator,
$\Theta$ annihilates each $\ell_{i,j}^d$.
Since for each $i$ these span all the $d$-forms on $\PP W_i$, then in fact $\Theta$ annihilates all of $\Bbbk[W_i]_d$, for each $i$.
In particular $\Theta$ annihilates $\ell^d$ for every $\ell \in W_i$, for each $i$.
Hence, returning to considering $\Theta$ as a polynomial, $\Theta$ vanishes at each point $[\ell] \in \PP W_i$, for each $i$.
This means $\Theta \in I_d$.

Now from $J_d \subset I_d \subset F^\perp$ we claim $J \subset F^\perp$.
For degrees $k > d$, $F^\perp_k = T_k$, so $J_k \subset F^\perp_k$.
For degrees $k < d$, $\Theta \in F^\perp_k$ if and only if $\Theta F = 0$,
if and only if for every $\Psi \in T_{d-k}$, $\Psi \Theta F = 0$,
if and only if for every $\Psi \in T_{d-k}$, $\Psi \Theta \in F^\perp_d$.
Meanwhile $\Theta \in J_k$ if and only if $\Theta([\ell_{i,j}]) = 0$ for all $i,j$,
if and only if for every $\Psi \in T_{d-k}$, $\Psi \Theta$ vanishes at each $[\ell_{i,j}]$,
if and only if for every $\Psi \in T_{d-k}$, $\Psi \Theta \in J_d$.
So if $\Theta \in J_k$ then for every $\Psi \in T_{d-k}$ we have $\Psi \Theta \in J_d \subset F^\perp_d$,
which implies that $\Theta \in F^\perp_k$.
Therefore $J_k \subset F^\perp_k$, as desired.
This proves the claim that $J \subset F^\perp$.
Then by the classical Apolarity Lemma, $F$ is in the span of the $\ell_{i,j}^d$, say $F = \sum_i \sum_j c_{i,j} \ell_{i,j}^d$.
For each $i$, set $G_i = \sum_j c_{i,j} \ell_{i,j}^d$.
Then $G_i \in \Bbbk[W_i]_d$ and $F \in \Span\{G_1,\dotsc,G_s\}$.
\end{proof}

The classical Apolarity Lemma is precisely the case $\dim W_1 = \dotsb = \dim W_s = 1$.

Note that in contrast to the previous statements, a subideal $I \subset F^\perp$ does not necessarily
determine the decomposition; rather it only determines the subspaces $W_1,\dotsc,W_s$ over which the decomposition occurs.

It would be interesting to find out if the above theorem,
or at least the numerical corollaries discussed before the proof,
are related to the catalecticants arising from
the sheaves mentioned in Example~\ref{example: subspace variety sheaf}.

\begin{remark}
It would be very interesting to have similar statements more generally,
or at least for natural situations such as
split rank (decomposition as a sum of products of linear forms, see Example~\ref{example: split rank})
and, if $d=kt$, then decomposition as a sum of $k$th powers of $t$-forms (see Example~\ref{example: kth powers of t-forms}).
\end{remark}

\section{Ranestad--Schreyer bounds}\label{section: ranestad-schreyer bounds}

Ranestad and Schreyer gave an extremely elegant lower bound for cactus and Waring rank in \cite{MR2842085}.
We recall their result, then generalize it.

\subsection{Classical Waring rank}

\begin{theorem}[{\cite{MR2842085}}]\label{thm: ranestad-schreyer}
Let $F \in S^d V$.
Let $\delta$ be a positive integer such that the homogeneous ideal $F^\perp$
is generated in degrees less than or equal to $\delta$.
Then the Waring rank $\r(F)$, smoothable rank $s\r(F)$, and cactus rank $c\r(F)$ satisfy
\[
  \r(F) \geq s\r(F) \geq c\r(F) \geq \frac{\ell(A^F)}{\delta} ,
\]
where $\ell(A^F)$ is the length of the apolar algebra $A^F$.
\end{theorem}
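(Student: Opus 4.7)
The plan is to prove the inequality for cactus rank $c\r(F)$; the inequalities for $s\r(F)$ and $\r(F)$ then follow from $c\r(F) \leq s\r(F) \leq \r(F)$. First I set up via apolarity: let $r := c\r(F)$ and, by the scheme Apolarity Lemma, choose a zero-dimensional apolar scheme $Z \subset \PP V$ to $F$ of length $r$, so $I(Z) \subseteq F^\perp$. Pick a general linear form $h \in T_1$ not vanishing at any point of $Z$; then $h$ is a non-zero-divisor on $B := T/I(Z)$, which makes $B$ a Cohen--Macaulay graded ring of Krull dimension $1$, free of rank $r$ over $\Bbbk[h]$, with Artinian reduction $\bar B := B/hB$ of length $r$ satisfying $\dim B_k = \sum_{j \leq k} \dim \bar B_j$. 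The apolar algebra decomposes as $A^F = B/J$, where $J := F^\perp/I(Z) \subseteq B$ is a homogeneous ideal generated in degrees $\leq \delta$, inherited from $F^\perp$.

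The core of the proof is to exhibit a non-zero-divisor on $B$ lying in $J_\delta$. Because $A^F$ is Artinian, its support as a graded $T$-module is only the irrelevant ideal, so for each minimal prime $\mathfrak{m}_p$ of $B$ (corresponding to a point $p$ of $Z$) one has $(A^F)_{\mathfrak{m}_p} = 0$, whence $J \not\subseteq \mathfrak{m}_p$. Some homogeneous generator $g$ of $J$ of degree $d \leq \delta$ must therefore satisfy $g \notin \mathfrak{m}_p$; multiplying by $h^{\delta-d}$ yields an element of $J_\delta$ not lying in $\mathfrak{m}_p$, so $J_\delta \not\subseteq \mathfrak{m}_p$ for each $p \in Z$. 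As $Z$ is finite, a general $g \in J_\delta$ avoids all the proper subspaces $J_\delta \cap \mathfrak{m}_p$ simultaneously, and such a $g$ is then a non-zero-divisor on $B$. Multiplication by this $g$ embeds $B(-\delta) \hookrightarrow J$, yielding the crucial inequality $\dim J_k \geq \dim B_{k-\delta}$ for every $k \geq \delta$.

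To finish, subtracting from $\dim B_k$ gives the degree-wise bound $\dim (A^F)_k \leq \sum_{j=k-\delta+1}^k \dim \bar B_j$ (with $\dim \bar B_j = 0$ for $j < 0$); summing over $k$, each $\dim \bar B_j$ is counted exactly $\delta$ times, producing $\ell(A^F) \leq \delta\, \ell(\bar B) = \delta r$, which rearranges to $c\r(F) \geq \ell(A^F)/\delta$. The main obstacle, as I expect it, is precisely the isolation of a non-zero-divisor inside $J_\delta$: the naive approach via the filtration $A^F \supseteq hA^F \supseteq \cdots \supseteq h^\delta A^F$ fails, because $h^\delta$ need \emph{not} lie in $F^\perp$ and so the filtration does not terminate at zero. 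The remedy is to replace $h^\delta$ by a carefully chosen element of $F^\perp_\delta$ acting as a non-zero-divisor on $B$, made possible by combining the generation-in-degree-$\leq\delta$ hypothesis with the Artinian-support property of $A^F$.
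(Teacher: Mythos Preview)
Your proof is correct and follows essentially the same strategy as the paper's: both find a degree-$\delta$ element $g \in F^\perp_\delta$ that is a non-zero-divisor on $B = T/I(Z)$ (equivalently, vanishes at no point of $Z$), then conclude $\ell(A^F) \leq \ell(B/gB) = \delta r$. The paper phrases this geometrically---$F^\perp_\delta$ has empty base locus in $\PP V$, so a general $G \in F^\perp_\delta$ misses the support of $Z$, and then $\Spec A^F \subseteq V(G) \cap \widehat{Z}$ with B\'ezout giving $\ell(V(G) \cap \widehat{Z}) = \delta r$---while you unpack the same content algebraically via the Artinian reduction $\bar B$ and Hilbert-function bookkeeping; your prime-avoidance argument for $J_\delta \not\subseteq \mathfrak{m}_p$ is a sound algebraic substitute for the paper's base-locus observation.
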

For the reader's convenience we include the proof given by Ranestad and Schreyer.
\begin{proof}
$\r(F) \geq s\r(F) \geq c\r(F)$ is obvious.
Let $Z \subset \PP V$ be a zero-dimensional apolar scheme of length $r$.
Let $I = I(Z)$ and let $\widehat{Z}$ be the affine variety defined by $I$.
The definition of $\delta$ means that the common zero locus in $\PP V$ of the linear series $F^\perp_{\delta}$
is exactly equal to the scheme defined by the ideal $F^\perp$; namely, the empty scheme.
So in affine space $V$, the common zero locus of $F^\perp_{\delta}$ is (a scheme supported at) the origin.
Thus a general $\delta$-form $G \in F^\perp_\delta$ does not vanish
at any of the points in the support of $Z$.
Then the affine hypersurface $V(G)$ has proper intersection with $\widehat{Z}$.
Since $G \in F^\perp$, $\Spec A^F \subset V(G)$; and since $Z$ is apolar to $F$, $\Spec A^F \subset \widehat{Z}$.
So $\Spec A^F \subseteq V(G) \cap \widehat{Z}$.
By B\'ezout's theorem, then, $\ell(A^F) = \ell(\Spec A^F) \leq \deg(V(G)) \deg(\widehat{Z}) = \delta r$.
\end{proof}
See \cite{MR2842085} for an application of this to monomials.
(And see \cite{Carlini20125}, \cite{MR3017012} for more about apolarity of monomials.)
See \cite{Shafiei:ud}, \cite{Shafiei:2013fk} for applications of this to determinants, permanents, Pfaffians, etc.
See \cite{Teitler:2013uq} for an application of this to reflection arrangements.
See \cite{Buczynska:2013kx} for a closer examination of this bound, particularly the quantity $\delta$.

In fact this proof shows:
\begin{corollary}\label{cor: ranestad-schreyer slight improvement}
Let $F \in S^d V$.
Let $\epsilon$ be a positive integer such that the homogeneous ideal generated by $F^\perp_{\leq \epsilon}$
defines a zero-dimensional affine variety, i.e., its common zero locus consists only of the origin in affine space.
Then $c\r(F) \geq \ell(A^F) / \epsilon$.
\end{corollary}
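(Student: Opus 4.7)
The plan is to follow the proof of Theorem~\ref{thm: ranestad-schreyer} nearly verbatim: the only change is that in place of a general form of degree $\delta$ in $F^\perp$ (which Ranestad and Schreyer obtain from the assumption that $F^\perp$ is generated in degrees $\leq \delta$), I will use a general form of degree $\epsilon$ in the subideal $J := \langle F^\perp_{\leq \epsilon}\rangle$. For the B\'ezout step to go through, what is needed is that such a form can be chosen to avoid any prescribed finite set of projective points, which reduces to showing that the single graded piece $J_\epsilon$ already has empty common zero locus in $\PP V$.

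First I would restate the hypothesis in projective terms: $V(J) = \{0\}$ in $V$ is equivalent to saying that $J$ has no common zero in $\PP V$. Then I would verify that the degree-$\epsilon$ piece $J_\epsilon$ already has no common projective zero. Indeed, any element of $J_\epsilon$ is a linear combination of products $G \cdot H$ where $G$ is a generator of $J$ (so $G \in F^\perp_{\leq \epsilon}$) and $H \in T_{\epsilon - \deg G}$. If some $[p] \in \PP V$ annihilated all such products, then fixing a generator $G$ and choosing $H$ with $H(p) \neq 0$ (possible since $\Bbbk$ is infinite) would force $G(p) = 0$; ranging over all generators $G$ would place $[p]$ in the (empty) common projective zero locus of $J$, a contradiction.

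Now let $Z \subset \PP V$ be a zero-dimensional apolar scheme to $F$ of length $r = c\r(F)$, with support $\{p_1,\dots,p_s\}$ and affine cone $\widehat Z \subset V$. The previous paragraph produces a general $G \in J_\epsilon \subseteq F^\perp_\epsilon$ that is nonzero at each $p_i$. Since $G \in F^\perp$ and $I(Z) \subseteq F^\perp$, we have $\Spec A^F \subseteq V(G) \cap \widehat Z \subset V$, and moreover $V(G) \cap \widehat Z$ is a proper (zero-dimensional) intersection, because $\widehat Z$ is one-dimensional with components the lines through the origin over the $p_i$ and $G$, being homogeneous and nonzero at each $p_i$, is not identically zero on any such line. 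B\'ezout's theorem then gives
\[
  \ell(A^F) \;\leq\; \ell\bigl(V(G) \cap \widehat Z\bigr) \;\leq\; \deg(G)\cdot\deg(\widehat Z) \;=\; \epsilon \, r,
\]
so $c\r(F) = r \geq \ell(A^F)/\epsilon$.

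The only genuinely new step relative to Ranestad and Schreyer is the brief argument that $J_\epsilon$ already sees the emptiness of the common projective zero locus of $J$; once that is established, the rest of the proof is formally identical to theirs, with $\delta$ replaced by $\epsilon$.
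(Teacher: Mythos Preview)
Your proof is correct and follows exactly the approach the paper intends. The paper presents this corollary with no separate proof, merely stating ``In fact this proof shows:'' after the proof of Theorem~\ref{thm: ranestad-schreyer}; you have simply written out what that sentence means, including the small extra verification that the single graded piece $J_\epsilon$ already has empty projective base locus (which the paper's proof of Theorem~\ref{thm: ranestad-schreyer} glosses over even in the original case $J = F^\perp$).
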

Of course if $F^\perp$ is generated in degrees less than or equal to $\delta$ then $F^\perp_{\leq \delta} = F^\perp$
whose common zero locus is just the origin because it is a homogeneous Artinian ideal.

\begin{corollary}
Let $F \in S^d V$.
Suppose $F^\perp$ is a complete intersection generated in degrees $0 < d_1 \leq \dotsb \leq d_n$.
Then $s\r(F) = c\r(F) = d_1 \dotsm d_{n-1}$.
\end{corollary}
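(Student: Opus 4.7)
The plan is to combine Theorem~\ref{thm: ranestad-schreyer} (which gives the lower bound) with an explicit construction of a smoothable apolar scheme (which gives the upper bound).

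For the lower bound, I would observe that since $A^F = T/F^\perp$ is a graded complete intersection in the polynomial ring $T = \Bbbk[\d_1,\dotsc,\d_n]$ with generators of degrees $d_1,\dotsc,d_n$, it is Artinian Gorenstein of length $\ell(A^F) = d_1 \dotsm d_n$ (this is the standard Koszul/Hilbert-series computation for complete intersections). Since $F^\perp$ is generated in degrees $\leq d_n$, Theorem~\ref{thm: ranestad-schreyer} applies with $\delta = d_n$ and gives
\[
  c\r(F) \geq \frac{\ell(A^F)}{d_n} = \frac{d_1 \dotsm d_n}{d_n} = d_1 \dotsm d_{n-1}.
\]

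For the upper bound, I would let $g_1,\dotsc,g_n$ be a minimal system of generators of $F^\perp$ with $\deg g_i = d_i$, and define $J = (g_1,\dotsc,g_{n-1}) \subset T$ together with $Z = V(J) \subset \PP V$. Since $g_1,\dotsc,g_n$ is a regular sequence in $T$, so is $g_1,\dotsc,g_{n-1}$; hence $T/J$ is Cohen--Macaulay of Krull dimension $1$, which means $Z$ is a $0$-dimensional complete intersection in $\PP V \cong \PP^{n-1}$ of degree $d_1 \dotsm d_{n-1}$ by B\'ezout. Moreover $J$ is unmixed of codimension $n-1$, so the irrelevant ideal is not associated to $J$, and therefore $J$ is already saturated, i.e., $J = I(Z)$. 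Since $g_1,\dotsc,g_{n-1} \in F^\perp$, we have $I(Z) \subset F^\perp$, so $Z$ is apolar to $F$ and $c\r(F) \leq d_1 \dotsm d_{n-1}$.

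For the smoothable rank, I still need $Z$ to be smoothable. The natural argument is to consider the flat family $Z_t = V(g_1 + t h_1,\dotsc,g_{n-1}+t h_{n-1})$ for generically chosen forms $h_i$ of degree $d_i$; this is a flat family of complete intersections (constant Hilbert polynomial) with $Z_0 = Z$, and by Bertini applied to the linear systems $|g_i + s h_i|$ the general fiber $Z_t$ is smooth, hence (being $0$-dimensional) reduced. Therefore $Z$ is smoothable, giving $s\r(F) \leq d_1 \dotsm d_{n-1}$, and combined with the lower bound, $c\r(F) = s\r(F) = d_1\dotsm d_{n-1}$.

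The main technical point that deserves care is the smoothability step: one needs that a general member of the family of complete intersections cut out by perturbed generators is reduced. If a ``clean'' Bertini argument is awkward in the graded/projective setting (e.g.\ if one wants to be careful about characteristic or basepoints), an alternative is to invoke the general fact that any $0$-dimensional Gorenstein scheme cut out as a complete intersection admits a smoothing, which follows from unobstructed deformations of complete intersections. The lower bound and the saturation check are essentially formal; the smoothability is the only step where one must actually do geometry.
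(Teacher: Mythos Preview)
Your proposal is correct and follows essentially the same approach as the paper: the lower bound via Theorem~\ref{thm: ranestad-schreyer} with $\delta = d_n$ and $\ell(A^F) = d_1\dotsm d_n$, and the upper bound by dropping the top-degree generator to obtain a one-dimensional complete intersection defining an apolar scheme of the right length. The paper's proof is terser---it simply asserts that every complete intersection is smoothable---whereas you supply the saturation check and sketch the Bertini/deformation argument; these extra details are correct and do no harm.
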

\begin{proof}
$\ell(A^F) = d_1 \dotsm d_n$ so $s\r(F) \geq c\r(F) \geq d_1 \dotsm d_{n-1}$.
Let $F^\perp = \langle G_1,\dotsc,G_n \rangle$, $\deg G_i = d_i$.
Then $\langle G_1, \dotsc, G_{n-1} \rangle$ is a one-dimensional complete intersection,
so it is smoothable (as every complete intersection is).
Hence $s\r(F) \leq \deg(T/\langle G_1,\dotsc,G_{n-1} \rangle) = d_1 \dotsm d_{n-1}$.
\end{proof}
This occurs for monomials \cite{MR2842085} and reflection arrangements \cite{Teitler:2013uq}.

\begin{remark}\label{remark: bounds for general forms}
For a general $d$-form $F$, the apolar ideal is generated entirely in degree $\delta = (d+2)/2$ if $d$ is even;
if $d$ is odd, the apolar ideal is either generated in the two degrees $\lfloor (d+2)/2 \rfloor$, $\delta = \lceil (d+2)/2 \rceil$
or entirely in the degree $\delta = \lfloor (d+2)/2 \rfloor$.
(It is easy to see that $F^\perp$ must have a generator of degree at most $(d+2)/2$.
One can show that $F^\perp$ has no generators in degrees strictly less than $\lfloor (d+2)/2 \rfloor$
using results on compressed algebras, see Definition 3.11 and Proposition 3.12 of \cite{MR1735271}.
What takes more work is to show that $F^\perp$ is generated in degrees less than or equal to $\lceil (d+2)/2 \rceil$.
If $d$ is even this follows from Proposition 4.1B (pg. 362) and Example 4.7 of \cite{MR748843}.
For $d$ odd it follows from unpublished notes of Uwe Nagel [personal communication].)

One can then see that the catalecticant lower bound is better than the Ranestad--Schreyer lower bound,
and Theorem~\ref{thm: single homogeneous polynomial} provides no improvement over the catalecticant lower bound.
However the actual rank is known by the Alexander--Hirschowitz theorem and it is strictly greater than the catalecticant lower bound.
\end{remark}

Theorem~\ref{thm: ranestad-schreyer} gives a lower bound for smoothable rank and cactus rank,
which may be strictly smaller than Waring rank.
In contrast, the lower bound in Theorem~\ref{thm: single homogeneous polynomial}
is in fact a lower bound for Waring rank $\r(F)$, as opposed to cactus rank or smoothable rank.
Here is an example to illustrate the difference.

\begin{example}
Consider $F = xy^{d-1}$ with $d \geq 3$.
This has cactus rank and smoothable rank equal to $c\r(F) = s\r(F) = 2$ \cite{MR2842085}.
Note $F$ is concise (since it is not a pure power), $\rank C^{d-2}_F = 2$, and $\Sigma_2(F) = V(y) \neq \varnothing$.
Then Theorem~\ref{thm: single homogeneous polynomial} gives the lower bound
\[
  \r(xy^{d-1}) \geq \rank C^{d-2}_F + \dim \widehat{\Sigma}_2(F) = 2 + 1 = 3 .
\]
Of course this bound is far from sharp: we know that $\r(xy^{d-1}) = d$.
But this simple example shows that Theorem~\ref{thm: single homogeneous polynomial} is not generally a lower bound
for cactus rank or smoothable rank.
\end{example}

In this sense, the lower bound of Theorem~\ref{thm: single homogeneous polynomial} has the potential to be better:
it is better than the bound of Theorem~\ref{thm: ranestad-schreyer} for any form $F$ such that
\[
  \frac{\ell(A^F)}{\delta} \leq c\r(F) < \rank C^a_F + \dim \widehat{\Sigma}_a(F) \leq \r(F) .
\]

Nevertheless in practice Theorem~\ref{thm: ranestad-schreyer} seems to be quite a bit better than
Theorem~\ref{thm: single homogeneous polynomial} for many examples of interest.

\begin{example}\label{example: ranestad-schreyer-shafiei determinant bound}
We have $\ell(A^{\det_n}) = \sum \binom{n}{k}^2 = \binom{2n}{n}$
and $\det_n^\perp$ is generated by quadrics \cite{Shafiei:ud}.
Hence $\r(\det_n) \geq \frac{1}{2} \binom{2n}{n}$.
For the sake of discussion we call this the Ranestad--Schreyer--Shafiei bound for $\r(\det_n)$.

For $F = \det_3$ and $F = \det_4$ the bound of Theorem~\ref{thm: single homogeneous polynomial}
is better than the Ranestad--Schreyer--Shafiei bound,
but for $n \geq 5$, the Ranestad--Schreyer--Shafiei bound for $\r(\det_n)$ is better than
the bound of Theorem~\ref{thm: single homogeneous polynomial}.
(See Example~\ref{example: determinant}.)
Incidentally, I do not know the cactus or smoothable ranks of $\det_n$ for any $n>2$.
\end{example}

\begin{remark}\label{remark: reducedness used in proof}
This discussion raises the question, why does Theorem~\ref{thm: single homogeneous polynomial}
give a lower bound for Waring rank rather than smoothable or cactus rank?
In other words, in giving a lower bound for the length of a reduced apolar scheme,
where is the reducedness used in the proof of the theorem?

It is used in the step that concludes,
if $h^{d-a}$ vanished at $[\ell_1], \dotsc, [\ell_r]$, then $h$ would also vanish at each $[\ell_i]$.
This is precisely the statement that $I = I(\{[\ell_1],\dotsc,[\ell_r]\})$ is radical, i.e., the scheme is reduced.
\end{remark}

\begin{remark}
Until recently, Theorem~\ref{thm: single homogeneous polynomial} was the only general result
known to me that gives a lower bound for the Waring rank of an arbitrary form,
as opposed to a lower bound for the cactus rank or border rank.

Another approach to lower bounds for Waring rank itself
comes from the technique used to find Waring ranks of monomials and sums of pairwise coprime monomials in \cite{Carlini20125}.
Very recently this method has been developed further in \cite{Carlini:2014rr} and \cite{Woo:2014ix}.
So far it has been used to find Waring ranks of increasingly general examples,
namely, sums of polynomials in small numbers of linearly independent variables.
The remarkable momentum of recent developments makes me optimistic that this approach will yield
more results in the future.
\end{remark}

\subsection{Simultaneous Waring rank}

\begin{thm}
Let $W \subseteq S^d V$ be a linear series.
Suppose $W^\perp$ is generated in degrees less than or equal to $\delta$.
Then $\r(W) \geq s\r(W) \geq c\r(W) \geq \ell(A^W) / \delta$.
\end{thm}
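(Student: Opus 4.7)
The plan is to mimic the Ranestad--Schreyer proof of Theorem~\ref{thm: ranestad-schreyer}, with the apolar algebra $A^F$ of a single form replaced everywhere by the apolar algebra $A^W$ of the linear series. The inequalities $\r(W) \geq s\r(W) \geq c\r(W)$ are immediate from the definitions, since a reduced zero-dimensional apolar scheme is in particular smoothable and zero-dimensional. So the content of the theorem is the lower bound $c\r(W) \geq \ell(A^W)/\delta$.

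To establish this, let $Z \subset \PP V$ be a zero-dimensional apolar scheme to $W$ of length $r = c\r(W)$, with homogeneous saturated defining ideal $I = I(Z) \subseteq W^\perp$, and let $\widehat Z \subset V$ denote the affine cone. The first observation I need is that $A^W$ is Artinian: since every element of $W$ has degree $d$, any differential operator of degree $>d$ annihilates all of $W$, so $W^\perp$ contains everything of degree $>d$. Hence $\Spec A^W$ is a finite-length scheme supported at the origin of affine space, and by apolarity it sits inside $\widehat Z$.

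Next I use the hypothesis that $W^\perp$ is generated in degrees $\leq \delta$: this implies that the ideal generated by $W^\perp_{\delta}$ alone has the same zero locus in $\PP V$ as $W^\perp$, which is empty (as $A^W$ is Artinian). In the affine cone $V$, the common zero locus of $W^\perp_\delta$ is therefore supported at the origin only. Since $Z$ is zero-dimensional with finitely many support points, none of which is the origin of $V$, a general $G \in W^\perp_\delta$ is nonzero at every support point of $Z$. Then the affine hypersurface $V(G)$ meets $\widehat Z$ properly, and $\Spec A^W \subseteq V(G) \cap \widehat Z$ since $G \in W^\perp$ and $I \subseteq W^\perp$. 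B\'ezout's theorem now gives
\[
  \ell(A^W) \;=\; \ell(\Spec A^W) \;\leq\; \deg V(G) \cdot \deg \widehat Z \;=\; \delta \cdot r,
\]
which is the desired inequality.

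The only step that might require care is ensuring that a general $G \in W^\perp_\delta$ really avoids all the (finitely many) support points of $Z$, and that $V(G) \cap \widehat Z$ is a proper intersection of the expected codimension so that B\'ezout applies; but this is exactly the same argument as in the single-form case, since the support of $Z$ is a finite set and $W^\perp_\delta$ has no base points in $V \setminus \{0\}$. I do not expect any genuinely new obstacle beyond the bookkeeping of replacing $F^\perp$ by $W^\perp$; the proof goes through verbatim. In particular, an analogue of Corollary~\ref{cor: ranestad-schreyer slight improvement} should also hold, with $\delta$ replaced by any $\epsilon$ such that $W^\perp_{\leq \epsilon}$ already cuts out the origin.
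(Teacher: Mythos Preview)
Your proof is correct and follows exactly the approach the paper intends: the paper's entire proof reads ``The proof is the same,'' and you have faithfully carried out the Ranestad--Schreyer argument with $F^\perp$ replaced by $W^\perp$ and $A^F$ by $A^W$. Your closing remark about the analogue of Corollary~\ref{cor: ranestad-schreyer slight improvement} is also correct.
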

The proof is the same.

\begin{example}
Let $X = (x_{i,j})$, $1 \leq i \leq m$, $1 \leq j \leq n$,
and consider the linear series $D_k$, $P_k$, $R_k$.
First we compute the lengths of the apolar algebras.
We have
\[
  \ell(A^{D_k}) = \ell(A^{P_k}) = \sum_{0 \leq a \leq k} \binom{m}{a} \binom{n}{a} .
\]
When $k = m \leq n$ this simplifies to
\[
  \sum_{0 \leq a \leq m} \binom{m}{m-a} \binom{n}{a} = \binom{m+n}{m} .
\]
And
\[
  \ell(A^{R_k}) = \sum_{0 \leq a \leq k} \binom{m}{a} \binom{n}{a} a! .
\]

Shafiei showed that $(\det_k)^\perp$ and $(\per_k)^\perp$ are each generated by quadrics \cite{Shafiei:ud}.
Using her result, it is easy to show that $D_k^\perp$ and $P_k^\perp$ are generated by quadrics,
see Example~\ref{example: matrix linear series apolar ideals}.
Therefore
\[
  \r(D_k), \r(P_k) \geq \frac{1}{2} \sum_{0 \leq a \leq k} \binom{m}{a} \binom{n}{a},
\]
and in particular when $k = m \leq n$,
\[
  \r(D_m), \r(P_m) \geq \frac{1}{2} \binom{m+n}{m} .
\]
The same lower bound holds for cactus and smoothable ranks.

It is also easy to see that $R_k^\perp$ is generated by quadrics,
again see Example~\ref{example: matrix linear series apolar ideals}.
So
\[
  \r(R_k) \geq \frac{1}{2} \sum_{0 \leq a \leq k} \binom{m}{a} \binom{n}{a} a! .
\]
\end{example}

\subsection{Multihomogeneous polynomials}\label{section: multihomogeneous ranestad-schreyer}

In order to give a generalization of the Ranestad--Schreyer Theorem (\ref{thm: ranestad-schreyer})
for multihomogeneous polynomials, we will use some elementary facts about finite schemes in multiprojective spaces.
While these facts are surely well-known to experts, it was surprisingly difficult to find a comprehensive reference.
I am grateful to Adam Van Tuyl for suggesting some (nearly-comprehensive) references, including
his dissertation \cite{Tuyl:2001cr},
the dissertation of Lavila-Vidal \cite{Lavila-Vidal:1999sp},
and \cite{MR1686450}.

Here is a brief list of the particular facts we need.
\begin{lemma}
If $Z \subset \prod_{i=1}^s \PP V_i$ is a zero-dimensional scheme with $I = I(Z)$,
then $V(I) \subset \prod_{i=1}^s V_i$ is $s$-dimensional
and $\deg V(I) = \ell(Z)$,
where degree is computed with respect to the grading by total degree.
\end{lemma}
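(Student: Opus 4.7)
The plan is to read off both claims from the multigraded Hilbert function of $T/I$ after re-grading by total degree. I would handle the dimension assertion first and locally: at a reduced closed point $z=([v_1],\dotsc,[v_s])\in\prod_{i=1}^s\PP V_i$ the affine cone in $\prod_{i=1}^s V_i$ cut out by the saturated ideal of $z$ is the product of lines $\Span(v_1)\times\dotsb\times\Span(v_s)$, a linear subspace of pure dimension $s$ and total degree $1$. For a general zero-dimensional $Z$ with primary decomposition $I=\bigcap_j Q_j$ at closed points $z_j$, each $V(Q_j)$ is a scheme-theoretic thickening of such a product of lines at $z_j$, so has pure dimension $s$; the union $V(I)=\bigcup_j V(Q_j)$ is therefore $s$-dimensional as claimed.

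For the degree I would work with the multigraded Hilbert function
\[
  h(d_1,\dotsc,d_s)=\dim_\Bbbk(T/I)_{(d_1,\dotsc,d_s)}.
\]
A standard fact about zero-dimensional subschemes of multiprojective space (see for instance \cite{Tuyl:2001cr}, \cite{Lavila-Vidal:1999sp}, \cite{MR1686450}) is that $h$ stabilizes to $\ell(Z)$ on the ``upper-right'' orthant: there is a threshold $D$ so that $h(d_1,\dotsc,d_s)=\ell(Z)$ whenever every $d_i\geq D$. Viewing $T$ as standard graded by total degree, the corresponding Hilbert function is
\[
  h_{\mathrm{tot}}(d)=\sum_{d_1+\dotsb+d_s=d}h(d_1,\dotsc,d_s).
\]
For $d\gg 0$ the tuples with every $d_i\geq D$ contribute
$\ell(Z)\binom{d-sD+s-1}{s-1}$, a polynomial in $d$ of degree $s-1$ with leading coefficient $\ell(Z)/(s-1)!$. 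The remaining tuples have some $d_i<D$ and hence lie in a finite union of slabs on which $h\leq\ell(Z)$ and whose intersection with the simplex $d_1+\dotsb+d_s=d$ has cardinality $O(d^{s-2})$, so their total contribution is $O(d^{s-2})$. Thus $h_{\mathrm{tot}}(d)$ is polynomial of degree $s-1$ with leading term $\ell(Z)d^{s-1}/(s-1)!$, which is precisely the statement that $V(I)$ has pure dimension $s$ and total degree $\ell(Z)$.

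The main technical ingredient is the stabilization of $h$ on the upper-right orthant. For a single reduced point it is immediate by direct computation, while for a general $Z$ (which may carry embedded structure) one either reduces to the single-point case by primary decomposition and localization in each projective factor, or invokes the references above directly. Once stabilization is in hand, the remaining bookkeeping---the asymptotic expansion of the binomial coefficient and the $O(d^{s-2})$ control of the boundary slabs---is routine, and the separation of the dimension and degree assertions is essentially automatic from the leading-term analysis.
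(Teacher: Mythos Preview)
Your proposal is correct and follows essentially the same route as the paper: both arguments compute the total-degree Hilbert function by summing the multigraded Hilbert function over the simplex $|\bdelta|=t$, split the sum into the stable region (where each piece contributes $\ell(Z)$) and the boundary slabs (contributing $O(t^{s-2})$), and read off dimension and degree from the leading term $\ell(Z)\,t^{s-1}/(s-1)!$. The only cosmetic differences are that the paper uses a general threshold multidegree $\bdelta_0$ rather than a cube $d_i\geq D$, and cites \cite{Tuyl:2001cr}, \cite{Lavila-Vidal:1999sp} directly for the dimension statement instead of your primary-decomposition sketch (though the paper, like you, also notes the dimension falls out of the Hilbert-polynomial computation anyway).
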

\begin{proof}
The dimension statement is precisely \cite[Prop.~2.2.9]{Tuyl:2001cr} or \cite[Lemma~1.4.1]{Lavila-Vidal:1999sp}.

Say $\ell(Z) = r$.
This means that for every multidegree $\bdelta$,
$I(Z)_{\bdelta}$ has codimension at most $r$ in $S_{\bdelta}$, where $S = \Bbbk[V_1 \oplus \dotsb \oplus V_s]$;
and there is a multidegree $\bdelta_0$ such that for every $\bdelta \geq \bdelta_0$,
$I(Z)_{\bdelta}$ has codimension equal to $r$ in $S_{\bdelta}$.
Now in the grading by total degree, for $t \gg 0$,
\[
\begin{split}
  \dim (S/I)_t &= \sum_{|\bdelta|=t} \dim(S_{\bdelta}/I(Z)_{\bdelta}) \\
    &= \sum_{\substack{|\bdelta|=t \\ \bdelta \geq \bdelta_0}} r
      + \sum_{\substack{|\bdelta|=t \\ \bdelta \not\geq \bdelta_0}} \dim(S_{\bdelta}/I(Z)_{\bdelta}) \\
    &= r \cdot \#\{\bdelta' \geq 0 : |\bdelta'| = t - |\bdelta_0| \}
      + \sum_{\substack{|\bdelta|=t \\ \bdelta \not\geq \bdelta_0}} \dim(S_{\bdelta}/I(Z)_{\bdelta}) \\
    &= r \cdot \binom{t - |\bdelta_0| + s-1}{s-1}
      + \sum_{\substack{|\bdelta|=t \\ \bdelta \not\geq \bdelta_0}} \dim(S_{\bdelta}/I(Z)_{\bdelta}) .
\end{split}
\]
Note
\[
\begin{split}
  \#\{\bdelta : |\bdelta|=t, \bdelta \not\geq \bdelta_0\} &= \binom{t+s-1}{s-1} - \binom{t-|\bdelta_0|+s-1}{s-1} \\
    &= \binom{t-|\bdelta_0|+s-1}{s-2} + \dotsb + \binom{t-1+s-1}{s-2},
\end{split}
\]
a polynomial $q(t)$ of degree $s-2$.
Therefore
\[
  0 \leq \sum_{\substack{|\bdelta|=t \\ \bdelta \not\geq \bdelta_0}} \dim(S_{\bdelta}/I(Z)_{\bdelta})
    \leq r \cdot q(t),
\]
so this sum is bounded by a polynomial of degree $s-2$; for sufficiently large $t$ it is equal to a polynomial
of degree at most $s-2$.
Hence for $t \gg 0$, $\dim (S/I)_t = P(t)$ is a polynomial in $t$ of degree $s-1$ with leading coefficient $r/(s-1)!$.

This $P(t)$ is precisely the Hilbert polynomial of $\PP V(I) \subset \PP(V_1 \oplus \dotsb \oplus V_s)$.
Hence $\deg V(I) = r = \ell(Z)$ (and we have re-proved the dimension statement).
\end{proof}

\begin{lemma}
Let $Z \subset \prod_{i=1}^s \PP V_i$ be a zero-dimensional scheme with $I = I(Z)$.
For each $i$ let $\hat{V}_i = V_1 \times \dotsb \times \{0\} \times \dotsb \times V_s$,
the product of all the $V_j$ with $j \neq i$.
Also for each $i$ let $\pr_i : \prod \PP V_j \to \prod_{j \neq i} \PP V_j$
be the projection onto all the factors except $V_i$, and let $Z_i = \pr_i(Z)$.
Let $I_i = I(Z_i)$.

Then in $\prod V_i$ we have $V(I) \cap \hat{V}_i = V(I_i)$,
an $(s-1)$-dimensional affine variety.
\end{lemma}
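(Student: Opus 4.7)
The plan is to interpret $V(I) \cap \hat{V}_i$ as the vanishing locus of those multihomogeneous pieces of $I$ whose $i$th entry is zero, identify this restricted ideal with $I_i$, and then invoke the previous lemma applied to $Z_i$.

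Let $S = \Bbbk[V_1^* \oplus \dotsb \oplus V_s^*]$ be the multigraded polynomial ring, with decomposition $S = \bigoplus_{\bd} S_{\bd}$. Write $S^{(i)} := \bigoplus_{\bd \,:\, d_i = 0} S_{\bd}$ for the subring of multihomogeneous polynomials not involving $V_i^*$; under the natural identification $S^{(i)} \cong \Bbbk[\hat{V}_i^*]$ it is simultaneously the coordinate ring of $\hat{V}_i$ and the multigraded coordinate ring of $\prod_{j \neq i} \PP V_j$. Set $I^{(i)} := I \cap S^{(i)}$. The key first observation is that any multihomogeneous $f \in S_{\bd}$ with $d_i > 0$ has every monomial containing a factor from $V_i^*$, and so vanishes identically on $\hat{V}_i$; hence only the pieces with $d_i = 0$ contribute to cutting out the intersection, giving
\[
  V(I) \cap \hat{V}_i = V(I^{(i)}) \subseteq \hat{V}_i.
\]

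Next I would identify $I^{(i)}$ with $I_i$. The projection $\pr_i$ corresponds to the inclusion $S^{(i)} \hookrightarrow S$ of multigraded coordinate rings; concretely, for multihomogeneous $g \in S^{(i)}$ one has $g(v_1,\dotsc,v_s) = g(v_1,\dotsc,\widehat{v_i},\dotsc,v_s)$, so $g$ vanishes on $Z$ if and only if $g$ vanishes on the scheme-theoretic image $Z_i = \pr_i(Z)$, yielding $I_i = I \cap S^{(i)} = I^{(i)}$. Combining this with the previous paragraph gives $V(I) \cap \hat{V}_i = V(I_i)$. Finally, since $Z_i$ is a zero-dimensional multiprojective scheme sitting in a product of $s-1$ projective spaces, the previous lemma applied to $Z_i$ yields that $V(I_i) \subset \hat{V}_i$ has dimension $s-1$.

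The step I expect to require the most care is the identification $I_i = I \cap S^{(i)}$: the multigraded ideals $I$ and $I_i$ are taken saturated, so one must check that saturation is compatible with restriction to the subring $S^{(i)}$, and that the scheme-theoretic image $\pr_i(Z)$ is correctly computed by $I \cap S^{(i)}$. For the zero-dimensional (hence finite) schemes at hand this is essentially formal since $\pr_i$ is proper and finite, but a cleaner exposition would phrase matters in terms of scheme-theoretic images rather than manipulating saturated multigraded ideals directly.
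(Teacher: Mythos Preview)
Your argument is correct. The paper does not actually prove this lemma, instead writing only that ``the reader may easily verify this,'' so there is no approach to compare against; you have supplied the verification the paper omits. Your two key observations are exactly right: multihomogeneous elements of $I$ with positive $i$th degree vanish identically on $\hat{V}_i$, so $V(I)\cap\hat{V}_i = V(I\cap S^{(i)})$; and $I\cap S^{(i)} = I_i$ since $\pr_i$ corresponds to the inclusion $S^{(i)}\hookrightarrow S$. Your caution about saturation is appropriate but resolves immediately: since $I$ is saturated with respect to each $\mathfrak{m}_j$, in particular for $j\neq i$, the intersection $I\cap S^{(i)}$ is automatically saturated in $S^{(i)}$. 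The dimension claim then follows from the previous lemma applied to the zero-dimensional scheme $Z_i$ in a product of $s-1$ projective spaces.
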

The reader may easily verify this.

\begin{theorem}\label{thm: multihomogeneous polynomial ranestad-schreyer}
Let $M \in S^{\bd} V$.
Let $\bdelta = (\delta_1,\dotsc,\delta_s)$ be such that $M^\perp$ is generated in multidegrees less than or equal to $\bdelta$.
Then $\r_{MH}(M) \geq s\r_{MH}(M) \geq c\r_{MH}(M) \geq \ell(A^M)/(\prod \delta_i)$.
\end{theorem}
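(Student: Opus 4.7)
The plan is to adapt the proof of Theorem~\ref{thm: ranestad-schreyer} to the multihomogeneous setting, using the preceding lemmas on finite schemes in multiprojective space. The inequalities $\r_{MH}(M) \geq s\r_{MH}(M) \geq c\r_{MH}(M)$ are immediate, so I focus on bounding the cactus rank. Let $Z \subset \prod_i \PP V_i$ be a zero-dimensional apolar scheme realizing $r = c\r_{MH}(M)$, with saturated multihomogeneous ideal $I = I(Z) \subseteq M^\perp$. By the first of the preceding lemmas, the affine cone $\widehat Z = V(I) \subset V = V_1 \oplus \dotsb \oplus V_s$ is $s$-dimensional of total degree $r$, and contains $\Spec A^M = V(M^\perp)$ as a zero-dimensional subscheme supported at the origin.

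The key step is to construct, for each $i = 1,\dotsc,s$, an element $G_i \in M^\perp$ of pure multidegree $\bdelta^{(i)} := (0,\dotsc,0,\delta_i,0,\dotsc,0)$ (so total degree $\delta_i$) that does not vanish at any point of $\rad{Z}$. For each $P \in \rad{Z}$ choose a lift $(\ell_1^P, \dotsc, \ell_s^P) \in V_1 \times \dotsb \times V_s$, and parametrize the reduced component $U_P = \Bbbk \ell_1^P \oplus \dotsb \oplus \Bbbk \ell_s^P$ of $\widehat Z$ by $(t_1,\dotsc,t_s) \mapsto (t_1 \ell_1^P, \dotsc, t_s \ell_s^P)$. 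A multihomogeneous element of $T$ of multidegree $\ba$ restricts along this parametrization to a scalar multiple of the monomial $t_1^{a_1} \dotsm t_s^{a_s}$; hence $M^\perp|_{U_P}$ is a monomial ideal in $\Bbbk[t_1,\dotsc,t_s]$. Since $V(M^\perp) \cap U_P = \{0\}$ set-theoretically, this monomial ideal must contain a pure power $t_i^{e_i}$ for each $i$, and the only way such a pure power lies in a monomial ideal whose generators have multi-exponents $\leq \bdelta$ is for $M^\perp$ to admit a generator of pure $i$-th multidegree $(0,\dotsc,a_i,\dotsc,0)$ with $a_i \leq \delta_i$ not vanishing at $P$. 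Multiplying such a generator by $\lambda^{\delta_i - a_i}$ for a single generic $\lambda \in V_i^*$ yields an element of $M^\perp_{\bdelta^{(i)}}$ non-vanishing at $P$, and a generic linear combination of these elements over the finite set $\rad{Z}$ produces the desired $G_i$.

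With $G_1,\dotsc,G_s$ in hand, on each reduced component $U_P$ the restriction of $G_i$ is a nonzero scalar multiple of $t_i^{\delta_i}$, so $V(G_1,\dotsc,G_s) \cap U_P$ is the origin with multiplicity $\prod_i \delta_i$; hence $V(G_1,\dotsc,G_s) \cap \widehat Z$ is zero-dimensional, and affine B\'ezout gives
\[
  \ell\bigl(V(G_1,\dotsc,G_s) \cap \widehat Z\bigr) \leq \deg(\widehat Z) \cdot \prod_i \deg G_i = r \prod_i \delta_i.
\]
Since each $G_i \in M^\perp$ and $I \subseteq M^\perp$, we have $\Spec A^M \subseteq V(G_1,\dotsc,G_s) \cap \widehat Z$, whence $\ell(A^M) \leq r \prod_i \delta_i$, which is the desired bound. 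The main obstacle is the existence argument for the $G_i$ of prescribed pure multidegrees: this is the only place where the generation hypothesis on $M^\perp$ is used, and it passes through the monomial-ideal analysis of $M^\perp|_{U_P}$. When $Z$ is non-reduced, the same argument applies to the underlying reduced components while the B\'ezout degree bound continues to hold for the full scheme $\widehat Z$ thanks to the preceding lemmas.
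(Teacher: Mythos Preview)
Your proof is correct and follows the same overall strategy as the paper: choose $G_i \in M^\perp$ of pure $i$-th multidegree $\delta_i$ not vanishing on $\rad{Z}$, show $V(G_1,\dotsc,G_s) \cap \widehat Z$ is supported at the origin, and invoke B\'ezout together with the first preceding lemma. The implementation differs in two places. For the existence of $G_i$, the paper argues more directly: since any multihomogeneous expression for an element of pure $i$-th multidegree in terms of generators of $M^\perp$ can only involve generators of pure $i$-th multidegree, the ideal $M^\perp \cap \Bbbk[V_i^*]$ is Artinian and generated in degrees $\leq \delta_i$, hence its degree-$\delta_i$ piece is basepoint-free in $\PP V_i$ and a general element avoids the finitely many $i$-th projections of points of $Z$. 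Your monomial-ideal analysis on each $U_P$ rederives exactly this, but less economically. Conversely, for the support-at-origin claim, your restriction to each $U_P$ (where $G_i$ becomes a nonzero multiple of $t_i^{\delta_i}$) is cleaner than the paper's route, which passes through the coordinate subspaces $\hat V_i$ and the second preceding lemma.
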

\begin{proof}
Let $Z \subset \prod \PP V_i$ be a closed zero-dimensional scheme of length $r$
and suppose that $I = I(Z) \subset M^\perp$.
The scheme $\Spec A^M$ naturally lies as a closed subscheme in $\Spec T \cong V = \prod V_i$.
Let $\widehat{Z} = V(I) \subset V$.
Note that $\Spec A^M \subset \widehat{Z}$.

For each $i=1,\dotsc,s$ let $S_i = \Bbbk[\PP V_i] \subset \Bbbk[\PP V] = S$.
The definition of $\bdelta$ means that $M^\perp \cap S_i$ is an Artinian ideal generated in degrees less than or equal to $\delta_i$.
For each $i$ let $G_i$ be a general element of $M^\perp_{(0,\dotsc,\delta_i,\dotsc,0)}$.
Since $M^\perp_{(0,\dotsc,\delta_i,\dotsc,0)}$ has no basepoints, $G_i$ does not vanish at any of the points of $Z$.
Let $B = V(G_1,\dotsc,G_s) \subset V$, a complete intersection of codimension $s$ and degree $\prod \delta_i$.

Let $\hat{V}_i = V_1 \times \dotsm \times \{0\} \times \dotsm \times V_s$, for $1 \leq i \leq s$.
Outside of the union $\bigcup \hat{V}_i$, $B$ is disjoint from $\widehat{Z}$, since none of the $G_i$ vanish at any point of $Z$.
On each $\hat{V}_i$, $G_i$ vanishes identically, but $G_j$ is not identically vanishing for $j \neq i$.
So $B \cap \hat{V}_i$ is a complete intersection of codimension $s-1$ in $\hat{V}_i$.
By the generality of the $G$'s, $B \cap \hat{V}_i$ does not meet $\widehat{Z} \cap \hat{V}_i$ outside the origin.

Therefore $B \cap \widehat{Z}$ is supported only at the origin.
So
\[
  \ell(A^M) = \ell(\Spec A^M) \leq \ell(B \cap \widehat{Z})
    = \deg(B) \deg(\widehat{Z}) = \Big(\prod \delta_i \Big) r,
\]
which completes the proof.
\end{proof}

\begin{example}\label{example: bihomogeneous product ranestad schreyer}
The bihomogeneous form $F = x_1 \dotsm x_a y_1 \dotsm y_b$
has apolar algebra of length $2^{a+b}$.
Let $\d_1,\dotsm,\d_a$ be the dual variables to the $x_i$ and let $\epsilon_1,\dotsc,\epsilon_b$ be the dual variables to the $y_j$.
Then $F^\perp = \langle \d_1^2,\dotsc,\d_a^2,\epsilon_1^2,\dotsc,\epsilon_b^2 \rangle$.
So $F^\perp$ is generated in bidegrees $(2,0)$ and $(0,2)$.
Set $\bdelta = (2,2)$.
Then
\[
  \r_{MH}(x_1 \dotsm x_a y_1 \dotsm y_b) \geq \frac{2^{a+b}}{4} = 2^{a+b-2} .
\]
Therefore the rank is in fact equal to $2^{a+b-2}$.
This answers the question raised in Example~\ref{example: bihomogeneous product}.
\end{example}

\begin{example}
More generally let
\[
  F = (\prod_{j=1}^{n_1} x_{1,j}^{d_1}) \dotsm (\prod_{j=1}^{n_s} x_{s,j}^{d_s}) ,
\]
a multihomogeneous form of multidegree $(n_1 d_1,\dotsc,n_s d_s)$.
Then
\[
  \r_{MH}(F) \leq \prod_{i=1}^s \r((x_{i,1} \dotsm x_{i,n_i})^{d_i}) = \prod_{i=1}^s (d_i+1)^{n_i-1}.
\]
On the other hand $F^\perp = \langle \d_{1,1}^{d_i+1}, \dotsc, \d_{s,n_s}^{d_s+1} \rangle$.
Then $\ell(A^F) = \prod_{i=1}^s (d_i+1)^{n_i}$
and, with $\bdelta = (d_1+1,\dotsc,d_s+1)$, we get $\r_{MH}(F) \geq \prod_{i=1}^s (d_i+1)^{n_i-1}$,
which exactly determines $\r_{MH}(F)$.
\end{example}

\begin{example}\label{example: matrix multiplication tensor}
In this example we consider the tensor corresponding to matrix multiplication.
This has been intensively studied, see for example \cite[Chap.~11]{MR2865915}, \cite{Landsberg:2014yq}.
Our results are not able to improve or even match
the bounds obtained by specialized methods.
Nevertheless, we consider the example in order to illustrate our results
and because of its intrinsic importance.
Also, we work out the apolar ideal of the matrix multiplication tensor.

Fix $n$ and let $A = (a_{i,j})$, $B = (b_{i,j})$, and $C = (c_{i,j})$ be generic $n \times n$ matrices in separate variables.
Let the dual variables be $\alpha_{i,j}$, $\beta_{i,j}$, and $\gamma_{i,j}$, for $1 \leq i, j \leq n$.
Let $\cA \cong \cB \cong \cC \cong \Bbbk^{n^2}$ be vector spaces of dimension $n^2$,
where $\cA$ has basis $\{a_{i,j}\}$, and so on.
Let $S = \Bbbk[A,B,C] = \Bbbk[\alpha_{1,1},\dotsc,\gamma_{n,n}]$,
the multigraded polynomial ring with each $a_{i,j}$ having multidegree $(1,0,0)$,
each $b_{i,j}$ having multidegree $(0,1,0)$, and each $c_{i,j}$ having multidegree $(0,0,1)$;
and let $T = \Bbbk[\alpha_{1,1},\dotsc,\gamma_{n,n}]$, with similar multigrading.
Let
\[
  \mult_n = \Tr(ABC),
\]
a multihomogeneous form of multidegree $(1,1,1)$ --- that is, a tensor.
This is the tensor representing matrix multiplication,
\[
  \mult_n \in \cA \otimes \cB \otimes \cC \cong \Hom(\cA \otimes \cB, \cC^*).
\]
Indeed,
\[
  \mult_n = \sum_{i=1}^n (ABC)_{i,i} = \sum_{i=1}^n \sum_{j=1}^n \sum_{k=1}^n a_{i,j} b_{j,k} c_{k,i},
\]
so that $\gamma_{k,i}M = (AB)_{i,k}$.
That is, as an element of $\Hom(\cA \otimes \cB, \cC^*)$,
$\mult_n$ takes pairs of matrices $(A,B)$ to their transposed product, $(AB)^t$.
The transposition simply corresponds to the dualization of the space $\cC^*$.
(More generally, one may consider rectangular matrix multiplication,
see for example \cite{Landsberg:2014yq}.)

Let $I_a = \langle \alpha_{1,1},\dotsc,\alpha_{n,n} \rangle$
and similarly let $I_b$ and $I_c$ be the ideals generated by the $\beta_{i,j}$ and $\gamma_{i,j}$, respectively;
we set $I = I_a^2 + I_b^2 + I_c^2$.
Let
\[
  I_{ab} = \langle \alpha_{i,j_1} \beta_{j_2,k} \mid 1 \leq i,j_1,j_2,k \leq n, j_1 \neq j_2 \rangle
\]
and similarly $I_{bc}, I_{ca}$; set $I' = I_{ab} + I_{bc} + I_{ca}$.
Finally let
\[
  J_{ab} = \langle \alpha_{i,j} \beta_{j,k} - \alpha_{i,1} \beta_{1,k} \mid 1 \leq i,j,k \leq n \rangle
\]
and similarly $J_{bc}$, $J_{ca}$; set $J = J_{ab} + J_{bc} + J_{ca}$.
It is easy to see that $\mult_n$ is annihilated by $K = I + I' + J$.
We claim that this is equal to $\mult_n^\perp$.

Let $\Theta \in \mult_n^\perp$ be a homogeneous element.
Since $\mult_n$ is concise, we must have $\deg \Theta > 1$.
Using the generators of $I$ we can eliminate, modulo $K$,
every term of $\Theta$ that has more than one $\alpha$, $\beta$, or $\gamma$ factor.
If $\Theta$ is not already zero modulo $K$ then we must have $\deg \Theta \leq 3$.
First, consider the case $\deg \Theta = 2$.
Every term appearing in $\Theta$ must have the form
$\alpha_{i,j} \beta_{j,k}$, $\beta_{j,k} \gamma_{k,i}$, or $\gamma_{k,i} \alpha_{i,j}$;
all other terms can be eliminated modulo $K$ using the generators of $I'$.
Using the generators of $J$, we can write, modulo $K$,
\[
  \Theta = \sum_{i,k} p_{i,k} \alpha_{i,1}\beta_{1,k} +
    \sum_{j,i} q_{j,i} \beta_{j,1}\gamma_{1,i} + \sum_{k,j} r_{k,j} \gamma_{k,1}\alpha_{1,j}.
\]
Then
\[
  \Theta \mult_n = \sum_{i,k} p_{i,k} c_{k,i} + \sum_{j,i} q_{j,i} a_{i,j} + \sum_{k,j} r_{k,j} b_{j,k} .
\]
Since $\Theta \mult_n = 0$ we must have every coefficient $p_{i,k} = q_{j,i} = r_{k,j} = 0$ for all $i,j,k$.
So $\Theta$ is zero modulo $K$.
This shows that $K$ and $\mult_n^\perp$ have the same degree $2$ elements.

Next consider the case $\deg \Theta = 3$.
Every term appearing in $\Theta$ must have the form $\alpha_{i,j} \beta_{j,k} \gamma_{k,i}$,
as all other terms can be eliminated using the generators of $I+I'$.
But $\alpha_{i,j} \beta_{j,k} \gamma_{k,i}$ is equal to $\alpha_{1,1} \beta_{1,1} \gamma_{1,1}$ modulo $J$.
So modulo $K$, we can write $\Theta = p \alpha_{1,1} \beta_{1,1} \gamma_{1,1}$ for some $p$.
Then
\[
  0 = \Theta \mult_n = (p \alpha_{1,1} \beta_{1,1} \gamma_{1,1})(a_{1,1} b_{1,1} c_{1,1}) = p.
\]
Therefore $\Theta \in K$, which shows that $K$ and $\mult_n^\perp$ have the same degree $3$ elements.

Finally since $\deg \mult_n = 3$, $\mult_n^\perp$ contains all elements of degree $\geq 4$, and so does $I$.
Hence $\mult_n^\perp$ and $K$ both have the same elements of degree $\geq 4$ (namely, all of them).
This finishes the proof that $\mult_n^\perp = K$, as claimed.

Next we claim that the generators we have listed for $I$, $I'$, and $J$ form a minimal set of generators for $\mult_n^\perp$.
But this is clear since they are all of total degree $2$ and linearly independent.
Therefore $\mult_n^\perp$ is generated in multidegrees $(2,0,0)$, $(1,1,0)$, and cyclic rotations of these.

Now we compute the ranks of catalecticants of $\mult_n$.
These are the same as the flattenings usually considered in the tensor literature.
Let $(0,0,0) \leq \ba = (a_1,a_2,a_3) \leq (1,1,1)$.
If $\ba = (0,0,0)$ or $\ba = (1,1,1)$ then $\rank C^{\ba}_{\mult_n} = 1$.
If $\ba = (1,0,0)$ then $\rank C^{\ba}_{\mult_n} = n^2$ since $C^{(1,0,0)}_{\mult_n}(\beta_{i,k}\gamma_{k,j}) = a_{i,j}$,
showing that $C^{(1,0,0)}_{\mult_n}$ is surjective.
Similarly the $(0,1,0)$ and $(0,0,1)$ catalecticants have rank $n^2$.
Then their transposes, the $(0,1,1)$, $(1,0,1)$, and $(1,1,0)$ catalecticants, also have rank $n^2$.
Therefore $\ell(A^{\mult_n}) = 2 + 6n^2$.

By Theorem~\ref{thm: multihomogeneous polynomial ranestad-schreyer} we have
\[
  \r_{MH}(\mult_n) \geq \frac{\ell(A^{\mult_n})}{2^3} = \frac{1}{4} + \frac{3}{4} n^2 .
\]
This is actually worse than the simple catalecticant bound \eqref{eq: multihomogeneous polynomial catalecticant bound},
$\r_{MH}(\mult_n) \geq n^2$.
This illustrates that Theorem~\ref{thm: multihomogeneous polynomial ranestad-schreyer},
like Theorem~\ref{thm: ranestad-schreyer}, gives better bounds when $F^\perp$ is generated in small degrees (or multidegrees)
relative to the degree (or multidegree) of $F$ itself; see \cite{Buczynska:2013kx}
for more on comparisons between the degrees of generators of $F^\perp$ and the degree of $F$.
Regarding matrix multiplication, the best known lower bound at this time is $\r_{MH}(\mult_n) \geq 3n^2 - o(n^2)$,
due to Landsberg \cite{Landsberg:2014yq}.

We may also simply regard $\mult_n$ as a homogeneous polynomial of (total) degree $3$, in $3n^2$ variables,
and ask for its classical Waring rank.
Theorem~\ref{thm: ranestad-schreyer} gives
\[
  \r(\mult_n) \geq \frac{\ell(A^{\mult_n})}{2} = 1 + 3n^2,
\]
since $\mult_n^\perp$ is generated by quadrics.
\end{example}

\subsection{Generalized rank}

%

\begin{theorem}\label{thm: ranestad schreyer for carlini k-ary rank}
Let $F \in S^d V$. Fix $k < n = \dim V$.
Let $\r_k(F)$ be the least number of terms in a decomposition of $F$ as a sum of forms each depending on $k$ or fewer variables.
Let $F^\perp = (H_1,\dotsc,H_t)$, $\deg H_i = d_i$, $d_1 \leq \dotsb \leq d_t$,
and suppose $j$ is such that $(H_1,\dotsc,H_j)$ defines a zero-dimensional affine variety.
Necessarily $j \geq n$.
Then $\r_k(F) \geq \ell(A^F) / (d_{j-k+1} \dotsm d_j)$.
\end{theorem}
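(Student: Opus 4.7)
The plan is to adapt the proof of Theorem~\ref{thm: ranestad-schreyer}: instead of cutting the zero-dimensional apolar scheme by a single hypersurface, I would cut the $k$-dimensional $k$-ary apolar variety $\widehat Z$ by a codimension-$k$ complete intersection of forms in $F^\perp$ of controlled degrees.

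First I would apply Carlini's Apolarity Lemma (Theorem~\ref{thm: apolarity lemma carlini}): writing $r = \r_k(F)$, there exist $k$-dimensional linear subspaces $W_1,\dotsc,W_r \subset V$ with $I(W_1 \cup \dotsb \cup W_r) \subseteq F^\perp$. Set $\widehat Z = W_1 \cup \dotsb \cup W_r$, a pure $k$-dimensional affine cone of degree $r$ containing $\Spec A^F$ as a closed subscheme. The observation $j \ge n$ is immediate from Krull's height theorem, since $(H_1,\dotsc,H_j)$ is $\mathfrak{m}$-primary in the $n$-dimensional regular ring $T$.

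Assuming one can choose $G_1,\dotsc,G_k \in F^\perp$ of respective degrees $d_{j-k+1},\dotsc,d_j$ such that $V(G_1,\dotsc,G_k) \cap \widehat Z = \{0\}$ set-theoretically, the intersection is proper and B\'ezout's theorem gives
\[
  \ell(A^F) = \ell(\Spec A^F) \le \ell\bigl( V(G_1,\dotsc,G_k) \cap \widehat Z \bigr) \le \Bigl(\prod_{l=1}^k \deg G_l\Bigr) \deg \widehat Z = (d_{j-k+1} \dotsm d_j)\, r,
\]
yielding the claimed lower bound on $r$.

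The core step, and the main obstacle, is the existence of such $G_l$. The hypothesis $V(H_1,\dotsc,H_j) = \{0\}$ says $(H_1,\dotsc,H_j)$ is $\mathfrak{m}$-primary in $T$, and crucially the restriction $(H_1|_{W_i},\dotsc,H_j|_{W_i})$ is $\mathfrak{m}$-primary in $\Bbbk[W_i] \cong \Bbbk[x_1,\dotsc,x_k]$ for each $i$, since $V(H_1,\dotsc,H_j) \cap W_i = \{0\}$. My plan is to build the $G_l$ inductively in decreasing order of degree, starting with $G_k \in F^\perp_{d_j}$: because $(H_1,\dotsc,H_j)_{d_j}$ already cuts out only the origin, this graded piece is base-point-free off $\{0\}$, so a generic $G_k$ does not vanish identically on any $W_i$ and $V(G_k) \cap \widehat Z$ has pure dimension $k-1$. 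Suppose inductively that $G_l,\dotsc,G_k$ have been chosen so that $V(G_l,\dotsc,G_k) \cap \widehat Z$ has pure dimension $l-1$. To produce $G_{l-1} \in F^\perp_{d_{j-k+l-1}}$, restrict to each $W_i$: the partial regular sequence $G_l|_{W_i},\dotsc,G_k|_{W_i}$ has associated primes of height at most $k-l+1 < k$, and since $(H_1|_{W_i},\dotsc,H_j|_{W_i})$ is $\mathfrak{m}$-primary it cannot be contained in any such prime; a prime-avoidance argument then produces an extension, and genericity of $G_{l-1}$ within $F^\perp_{d_{j-k+l-1}}$ makes the choice work uniformly over the finitely many $W_i$. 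The technically delicate step is showing the avoidance can be realized within the prescribed graded piece $F^\perp_{d_{j-k+l-1}}$, whose base locus on $V$ potentially contains $V(H_1,\dotsc,H_{j-k+l-1})$ and may be positive-dimensional; this is where the precise degree choice $d_{j-k+1},\dotsc,d_j$ is essential, and where the earlier generic choices of $G_l,\dotsc,G_k$ must be exploited to place each component of $V(G_l,\dotsc,G_k) \cap \widehat Z$ outside that base locus.
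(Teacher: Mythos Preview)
Your overall strategy matches the paper's: apply Carlini's Apolarity Lemma to get the $k$-dimensional cone $\widehat Z = W_1\cup\dotsb\cup W_r$ containing $\Spec A^F$, cut it by a complete intersection of $k$ forms from $F^\perp$ of degrees $d_{j-k+1},\dotsc,d_j$, and finish with B\'ezout. You also correctly identify the real obstacle---ensuring that at each inductive step the relevant graded piece $F^\perp_{d_{j-k+l-1}}$ (whose base locus might be positive-dimensional) contains a form not vanishing on any component of the current intersection.

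Where your proposal falls short is in resolving that obstacle. Your suggestion to ``exploit the earlier generic choices of $G_l,\dotsc,G_k$ to place each component outside that base locus'' does not work as stated: the components of $V(G_l,\dotsc,G_k)\cap\widehat Z$ are constrained to live inside $\widehat Z$, and you have no control over whether they avoid $V(H_1,\dotsc,H_{j-k+l-1})$, which could meet $\widehat Z$ in positive dimension regardless of how generically the later $G$'s were chosen. The paper's resolution is to choose not just $k$ forms but all $j$ of them: take general $G_i\in (H_1,\dotsc,H_j)_{d_i}$ for every $i=1,\dotsc,j$, so that $V(G_1,\dotsc,G_j)=\varnothing$ projectively. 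Then at the step where you want $V(G_{i-1})$ to miss every component of $C_i = C\cap V(G_i,\dotsc,G_j)$, the auxiliary forms do the work: since $V(G_1,\dotsc,G_{i-1})\cap V(G_i,\dotsc,G_j)=\varnothing$, the linear series $(G_1,\dotsc,G_{i-1})_{d_{i-1}}$ is base-point-free on $C_i$, and hence so is the larger series $I_{d_{i-1}}$ in which $G_{i-1}$ was chosen generically. The extra forms $G_1,\dotsc,G_{j-k}$ are never used in the final complete intersection; they exist solely to certify base-point-freeness at each stage of the induction.
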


\begin{proof}
Fix a decomposition $F = F_1 + \dotsb + F_r$, each $F_i$ depending on the variables in $W_i$, a $k$-dimensional subspace.
Let $C$ be the reduced union of the $\PP W_i$.
Every component of $C$ is $(k-1)$-dimensional.

Let $I = (H_1,\dotsc,H_j)$.
For each $i=1,\dotsc,j$, let $G_i \in I_{d_i}$ be general.
Then $I = (G_1,\dotsc,G_j)$.
For each $i=1,\dotsc,j$, let $C_i = C \cap V(G_i,\dotsc,G_j)$
and let $I(i) = (G_1,\dotsc,G_{i-1})$.

Since the linear series $I_{d_j}$ has no basepoints, by generality of $G_j$ and Bertini's theorem,
$V(G_j)$ does not contain any component of $C$.
So every component of $C_j$ is $(k-2)$-dimensional.
Suppose inductively that every component of $C_i$ is $(k-j+i-2)$-dimensional.
Note that $V(G_1,\dotsc,G_{i-1})$ is disjoint from $V(G_i,\dotsc,G_j)$,
so the linear series $I(i)_{d_{i-1}}$ (the degree $d_{i-1}$ elements of the ideal $(G_1,\dotsc,G_{i-1})$)
has no basepoints on $C_i$.
By generality of $G_{i-1}$ and Bertini, then, $V(G_{i-1})$ does not include any component of $C_i$,
so every component of $C_{i-1}$ has dimension exactly one less than the dimension of $C_i$.

Therefore $C_{j-k+1}$ is empty.
Let $J = (G_{j-k+1},\dotsc,G_j)$ and let $\widehat{Z} = V(J)$ be the affine variety defined by $J$.
Let $\widehat{C} = \bigcup W_i$ be the reduced union of the $W_i$.
Note that every component of $\widehat{C}$ has dimension $k$,
while since $J$ has $k$ generators, every component of $\widehat{Z}$ has codimension at most $k$.
We have just seen that $\widehat{Z}$ intersects $\widehat{C}$ only at the origin;
thus $\widehat{Z}$ is a complete intersection,
in particular $\widehat{C}$ and $\widehat{Z}$ intersect properly.
Since $J \subset F^\perp$, $\Spec A^F \subset \widehat{Z}$.
And by the Apolarity Lemma, $I(C) \subset F^\perp$, so $\Spec A^F \subset \widehat{C}$.
Therefore
\[
  \ell(A^F) = \ell(\Spec A^F) \leq \deg(\widehat{C} \cap \widehat{Z}) = \deg(\widehat{C}) \deg(\widehat{Z})
    = r (d_{j-k+1} \dotsm d_j),
\]
as claimed.
\end{proof}


\begin{conjecture}
Let $F = x_1^{d_1} \dotsm x_n^{d_n}$ with $0 < d_1 \leq \dotsb \leq d_n$.
Theorem~\ref{thm: ranestad schreyer for carlini k-ary rank} gives
\[
  \r_k(x_1^{d_1} \dotsm x_n^{d_n}) \geq \frac{(d_1+1) \dotsm (d_n+1)}{(d_{n-k+1}+1) \dotsm (d_n+1)} = (d_1+1) \dotsm (d_{n-k}+1) .
\]
Conversely, let
\[
  x_1^{d_1} \dotsm x_{n-k+1}^{d_{n-k+1}} = \sum_{i=1}^m \ell_i^{d_1+\dotsb+d_{n-k+1}}
\]
be a Waring decomposition, $m = \r(x_1^{d_1} \dotsm x_{n-k+1}^{d_{n-k+1}}) = (d_2+1)\dotsm(d_{n-k+1}+1)$.
Then
\[
  x_1^{d_1} \dotsm x_n^{d_n} = \sum_{i=1}^m \ell_i^{d_1+\dotsb+d_{n-k+1}} x_{n-k+2}^{d_{n-k+2}} \dotsm x_n^{d_n} ,
\]
where each term depends essentially on $k$ variables.
Therefore
\[
  \r_k(x_1^{d_1} \dotsm x_n^{d_n})
    \leq \r(x_1^{d_1} \dotsm x_{n-k+1}^{d_{n-k+1}})
    = (d_2+1) \dotsm (d_{n-k+1}+1).
\]
We conjecture that this is an equality.

It is trivially true when $k=1$
(ordinary Waring rank, reducing to the theorem of Carlini--Catalisano--Geramita \cite{Carlini20125})
or $k=n$.
It is true whenever $d_1 = \dotsb = d_{n-k+1} \leq d_{n-k+2} \leq \dotsb \leq d_n$
because the upper and lower bounds are equal:
\[
  \r_k\big( (x_1 \dotsm x_{n-k+1})^d x_{n-k+2}^{d_{n-k+2}} \dotsm x_n^{d_n} \big)
    = \r\big( (x_1 \dotsm x_{n-k+1})^d \big) = (d+1)^{n-k} .
\]
\end{conjecture}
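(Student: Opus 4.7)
My plan is to establish the matching lower bound, since the upper bound $(d_2+1)\dotsm(d_{n-k+1}+1)$ is already constructed in the statement. First I would invoke Theorem~\ref{thm: apolarity lemma carlini}, which reduces the task to showing that any union $C = W_1 \cup \dotsb \cup W_r$ of $k$-dimensional subspaces $W_i \subset V$ with $I(C) \subseteq F^\perp = (\d_1^{d_1+1}, \dotsc, \d_n^{d_n+1})$ must have $r \geq (d_2+1)\dotsm(d_{n-k+1}+1)$. This converts the problem into a purely geometric question about configurations of linear subspaces apolar to a monomial.

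Second, I would attempt an induction on $n$ via differentiation: applying $\d_n^{d_n}$ to a hypothetical decomposition $F = \sum_i G_i$ with $G_i \in \Bbbk[W_i]$ yields $d_n! \cdot x_1^{d_1}\dotsm x_{n-1}^{d_{n-1}} = \sum_i \d_n^{d_n}G_i$, where each $\d_n^{d_n}G_i$ still lies in $\Bbbk[W_i]$. Setting $x_n = 0$ and projecting the subspaces $W_i$ into $\Bbbk[x_1,\dotsc,x_{n-1}]$ produces a $k$-decomposition of $x_1^{d_1}\dotsm x_{n-1}^{d_{n-1}}$ with at most $r$ terms, to which the inductive hypothesis applies.

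The main obstacle is that this naive induction yields only $r \geq (d_2+1)\dotsm(d_{n-k}+1)$, losing the factor $(d_{n-k+1}+1)$ needed for the conjecture --- essentially the same gap as in Theorem~\ref{thm: ranestad schreyer for carlini k-ary rank}. To recover the missing factor, one needs to extract more information from the decomposition, for example by establishing a dichotomy: either many of the projected subspaces $\pi(W_i)$ have dimension strictly less than $k$ (giving a stronger restriction on the projected problem), or most of the $W_i$ are constrained to meet the hyperplane $\{x_n = 0\}$ in a controlled way, in which case a separate counting argument is needed.

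A complementary route is to refine the Ranestad--Schreyer Bezout bound by cutting $C$ not with generic elements of $F^\perp$ but with specific monomial hypersurfaces $V(\d_{i_1}^{d_{i_1}+1}), \dotsc, V(\d_{i_k}^{d_{i_k}+1})$, chosen to exploit the linear structure of each component $W_i$. The hardest step in either approach will be turning the set-theoretic intersection bounds into sharp scheme-theoretic length estimates compatible with the structure of $A^F$, a Gorenstein Artin algebra of length $\prod_i (d_i+1)$ whose socle degree equals $\sum_i d_i$. The fact that the conjecture specializes to Carlini--Catalisano--Geramita at $k=1$, and agrees with the Ranestad--Schreyer lower bound whenever $d_1 = \dotsb = d_{n-k+1}$, gives strong evidence that it is correct, but a full proof in general appears to require a genuinely new idea beyond the techniques developed in this paper.
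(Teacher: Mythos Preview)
The statement you are asked to ``prove'' is labelled a \emph{Conjecture} in the paper, and the paper does not supply a proof. What the paper does is exactly what is contained in the statement itself: it records the lower bound coming from Theorem~\ref{thm: ranestad schreyer for carlini k-ary rank}, exhibits the upper bound via the explicit decomposition using a Waring decomposition of $x_1^{d_1}\dotsm x_{n-k+1}^{d_{n-k+1}}$, and then conjectures that the upper bound is sharp. The only cases verified are $k=1$, $k=n$, and the equal-exponent case $d_1=\dotsb=d_{n-k+1}$, all of which follow immediately because the stated upper and lower bounds coincide there.

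Your proposal is therefore not to be compared against a proof, because there is none. To your credit, your final paragraph reaches precisely the right conclusion: the induction-by-differentiation and the Ranestad--Schreyer/B\'ezout cutting arguments both reproduce the lower bound $(d_1+1)\dotsm(d_{n-k}+1)$ already recorded in the paper, and neither recovers the missing factor $(d_{n-k+1}+1)/(d_1+1)$ needed to match the upper bound. Your assessment that ``a full proof in general appears to require a genuinely new idea beyond the techniques developed in this paper'' is exactly the paper's own position --- this is why it is stated as a conjecture.
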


\bigskip

\section*{Acknowledgments}

I am grateful to a number of people for helpful conversations, suggestions, and pointing out errors,
including
Jarek Buczy\'nski, Harm Derksen, Chris Francisco, Anthony Iarrobino,
Nathan Ilten, Jeremy Martin, Uwe Nagel, Luke Oeding, and Adam Van Tuyl.
The software \textit{Macaulay2} \cite{M2} was invaluable in computing examples.

\bibliography{../../biblio}

\bigskip

\end{document}